\crefname{assumption}{Assumption}{Assumptions}
\newcommand{\bit}{\begin{itemize}}
	\newcommand{\eit}{\end{itemize}}
\newcommand{\be}{\begin{equation}}
\newcommand{\en}{\end{equation}}
\newcommand{\diag }{{\rm diag}}
\newcommand{\Diag }{{\rm Diag}}
\newcommand{\RE}{{\rm Re}}
\newcommand{\IM}{{\rm Im}}
\newcommand{\TR}{{\rm Tr}}
\pgfplotsset{compat=1.12}
\crefname{hypothesis}{Hypothesis}{Hypotheses}
\title{An Efficient Quadratic Programming Relaxation Based Algorithm for Large-Scale MIMO Detection\thanks{Submitted on June 22, 2020, first revised on November 23, 2020, accepted on March 4, 2021.
\funding{The work of Qing-Na Li was supported by the National Natural Science Foundation of China (NSFC) 12071032, 11671036. The work of Ya-Feng Liu was supported in part by NSFC under Grant 12022116, Grant 12021001 , Grant 11688101, and Grant 11991021. The work of Wei-Kun Chen was supported in part by by Beijing Institute of Technology Research Fund Program for Young Scholars (Nos. 3170011181905 and 3170011182012).}}}
\author{Ping-Fan Zhao\thanks{School of Mathematics and Statistics, Beijing Institute of Technology, Beijing
  (\email{pfzhao@bit.edu.cn}).}
\and Qing-Na Li\thanks{Qing-Na Li is the corresponding author. School of Mathematics and Statistics/Beijing Key Laboratory on MCAACI, Beijing Institute of Technology, Beijing 
	(\email{qnl@bit.edu.cn}, \email{chenweikun@bit.edu.cn}).}
\and Wei-Kun Chen\footnotemark[3]
\and Ya-Feng Liu\thanks{State Key Laboratory of Scientific and Engineering Computing, Institute of Computational Mathematics and Scientific/Engineering Computing, Academy of Mathematics and Systems Science, Chinese Academy of Sciences, Beijing 
	(\email{yafliu@lsec.cc.ac.cn}).}}
\begin{document}
	\definecolor{xdxdff}{rgb}{0.49019607843137253,0.49019607843137253,1}
	\maketitle	
	
	\begin{abstract}
		Multiple-input multiple-output (MIMO) detection is a fundamental problem in wireless communications and it is strongly NP-hard in general. Massive MIMO has been recognized as a key technology in the fifth generation (5G) and beyond communication networks, which on one hand can significantly improve the communication performance, and on the other hand poses new challenges of solving the corresponding optimization problems due to the large problem size.
		While various efficient algorithms such as semidefinite relaxation (SDR) based approaches have been proposed for solving the small-scale MIMO detection problem, they are not suitable to solve the large-scale MIMO detection problem due to their high computational complexities. In this paper, we propose an efficient sparse quadratic programming (SQP) relaxation based algorithm for solving the large-scale MIMO detection problem. In particular, we first reformulate the MIMO detection problem as an SQP problem. By dropping the sparse constraint, the resulting relaxation problem shares the same global minimizer with the SQP problem.  In sharp contrast to the SDRs for the MIMO detection problem, our relaxation does not contain any (positive semidefinite) matrix variable and the numbers of variables and constraints in our relaxation are significantly less than those in the SDRs, which makes it particularly suitable for the large-scale problem. Then we propose a projected Newton based quadratic penalty method to solve the relaxation problem, which is guaranteed to converge to the vector of transmitted signals under reasonable conditions. By extensive numerical experiments, when applied to solve small-scale problems, the proposed algorithm is demonstrated to be competitive with the state-of-the-art approaches in terms of detection accuracy and solution efficiency; when applied to solve large-scale problems, the proposed algorithm achieves better detection performance than a recently proposed generalized power method. 
	\end{abstract}
	
	\begin{keywords}
		 MIMO Detection, Projected Newton Method, Quadratic Penalty Method, Semidefinite Relaxation, Sparse Quadratic Programming Relaxation
	\end{keywords}
	
	\begin{AMS}
		90C22, 90C20, 90C27
	\end{AMS}
	
	\section{Introduction}
	Multiple-input multiple-output (MIMO) detection is a fundamental problem in  modern  communications \cite{albreem2019massive, 1yang2015fifty}. 
	The input-output relationship of the MIMO channel is
	\begin{equation}\label{MIMO}
	r=H{x}^*+v,
	\end{equation}
	where
	$ r\in {\mathbb{C}}^m $ denotes the vector of received signals,	
	$ H\in {\mathbb{C}}^{m\times n} $ denotes an $ m\times n $ complex channel matrix (usually $ m\geqslant n $), $ {x}^*\in {\mathbb{C}}^n $ denotes the vector of transmitted signals, and $ v\in {\mathbb{C}}^m $ denotes an additive white circularly symmetric Gaussian noise. The goal of MIMO detection is to recover the transmitted signals $ x^* $ from the received signals $ r $ based on the channel information $ H $. We refer to \cite{jalden2006detection,1yang2015fifty} for a review of different formulations and approaches for MIMO detection and \cite{albreem2019massive} for the latest progress in MIMO detection.
	
	In this paper, we assume that  $x^*$ in \cref{MIMO} is modulated via the  $M $-Phase-Shift Keying ($ M $-PSK) modulation scheme with $ M\geqslant2 $. More exactly, each entry $ x^*_j $ of $ x^* $ belongs to a finite set:
	
	\begin{equation}\label{xj}
	x_j^*\in \mathcal{X}\triangleq\left \{\mathrm{exp}(\mathrm{i}\theta)\ \left|\ \theta=\dfrac{2(k-1)\pi}{M},\  k=1,\ldots,M\right. \right \},\ j=1, \ldots,n,
	\end{equation}
	where $ \mathrm{i} $ is the imaginary unit. The mathematical formulation for the MIMO detection problem is
	\begin{equation}
	\tag{{\rm{P}}}
	\label{P}
	\begin{aligned}
	& \underset{x\in {\mathbb{C}}^n}{\min}
	& & F(x)\triangleq{\|Hx-r\|}_2^2 \\
	& \ \text{s.t.}
	& & {\left| x_j\right| }^2=1,\ j=1, \ldots,n,\\
	&&& \arg(x_j)\in \mathcal{A}\triangleq\left\{ 0,\,\dfrac{2\pi}{M},\ldots,\,\dfrac{2(M-1)\pi}{M}\right\} ,\ j=1, \ldots,n,
	\end{aligned}
	\end{equation}
	where $ \|\cdot\|_2 $ denotes the Euclidean norm and $ \arg(\cdot) $ denotes the argument of the complex number.
	
	Let
	\begin{equation}\label{def-Q}
	Q=H^\dagger H\text{ and }c=-H^\dagger r ,
	\end{equation}
	where $ (\cdot)^\dagger  $ denotes the conjugate transpose.
	Then problem \cref{P} is equivalent to the following complex quadratic programming problem
	\begin{equation}
	\tag{{\rm{CQP}}}
	\label{CQP}
	\begin{aligned}
	& \underset{x\in {\mathbb{C}}^n}{\min}
	& & x^\dagger Qx+2\RE(c^\dagger x)\\
	& \ \text{s.t.}
	& & {\left| x_j\right| }^2=1,\ j=1, \ldots,n,\\
	&&& \arg(x_j)\in \mathcal{A},\ j=1, \ldots,n,
	\end{aligned}
	\end{equation}
	where $ \RE(\cdot) $ denotes the real part of the complex number.
	
	Various methods to tackle the MIMO detection problem can be summarized into several lines \cite[Figure 15]{1yang2015fifty}, including tree search \cite{o2000lattice,pohst1981on, xie1993joint}, lattice reduction (LR) \cite{lenstra1982factoring,zhou2013element}, and semidefinite relaxation (SDR) \cite{lu2017an,lu2020an,tan2001the, wai2011cheap}. The tree search based methods are the most popular detectors in the era of multi-antenna MIMO systems \cite{1yang2015fifty}. Taking the typical tree search based method, the sphere decoder (SD) algorithm \cite{o2000lattice}, as an example, it is regarded as the benchmark for globally solving the MIMO detection problem. However, both the expected and worst-case complexities of the SD algorithm are exponential \cite{jalden2005on,verdu1989computational}. The most popular LR algorithm is the Lenstra-Lenstra-Lov$ \acute{a} $sz (LLL) algorithm \cite{lenstra1982factoring}, whose worst-case computational complexity can be prohibitively high \cite{jalden2008worst, yao2003efficient}. Below we mainly review the SDR based approach, which is most related to this work.
	
	The SDR based approach was first proposed for a binary PSK (BPSK) modulated code division multiple access (CDMA) system \cite{tan2001the}. Then it was extended to the quadrature PSK (QPSK) scenario \cite{Lotter1998Space} and further to the high-order $M$-PSK scenario \cite{luo2003an, ma2004semidefinite}. In \cite{mobasher2007a}, a quadratic assignment problem formulation was proposed for problem \cref{P}, and a near-maximum-likelihood decoding algorithm was designed based on the resulting SDR. Other early SDR based approaches are summarized in \cite[Table IX]{1yang2015fifty}. 
	
	SDR based approaches generally perform very well for solving the MIMO detection problem. To understand the reason, various researches have been done and one line of researches is to identify conditions under which the SDRs are \emph{tight} \cite[Definition 1]{lu2019tightness}.
	For the case where $M=2$, So \cite{so2010probabilistic} proposed an SDR of problem \cref{P} and proved its tightness when the following condition
	\begin{equation}\label{Cond1}
	\lambda_{\min}(\RE(H^\dagger H))>\|\RE(H^\dagger v)\|_\infty
	\end{equation}
	is satisfied.  Here $ H$ and $v $ are defined in \cref{MIMO}, $ \lambda_{\min}(\cdot) $ denotes the smallest eigenvalue of a given matrix, and $ \|\cdot\|_\infty $ denotes
	the $ \ell_\infty $-norm. An open question proposed in \cite{so2010probabilistic} is that whether the (conventional) SDR is still tight under condition \cref{Cond1} for the case where $M\geqslant 3$. It was negatively answered in \cite{lu2019tightness}. In addition, Lu et al. in \cite{lu2019tightness} proposed an enhanced SDR (see \cref{ERSDR1} further ahead) by adding some valid inequalities and showed that under condition
	\begin{equation}\label{Cond2}
	\lambda_{\min}(H^\dagger H)\sin\left(\frac{\pi}{M}\right) >\|H^\dagger v\|_\infty,
	\end{equation}
	\cref{ERSDR1} is tight. In \cite{liu2019on}, the relations between different SDRs were further analyzed. In particular, it was proved that \cref{ERSDR1} and the SDR proposed in \cite{mobasher2007a} are equivalent, and as a result, the SDR proposed in \cite{mobasher2007a} is also tight under condition \cref{Cond2}. Other representative analysis results can be found in \cite{busari2018millimeter-wave, molisch2017hybrid, hestenes2021tightness}. 
	
	One key advantage of the SDR based approaches, compared to SD and LLL algorithms, is that the SDR admits polynomial-time algorithms. There are well developed solvers for solving the SDR, such as MOSEK \cite{mosek} and the latest SDPNAL+ \cite{sun2020SDPNAL, yang2015SDPNAL, zhao2009a, zhao2010a}. However, the numbers of variables and constraints in the SDRs are much larger than those in problem \cref{CQP}, and hence the SDR based approaches cannot be used to solve the \emph{large-scale} MIMO detection problem. On the other hand, it was predicted that the mobile data traffic will grow exponentially in 2017-2022 \cite{cisco2019cisco}, which calls for higher data rates, larger network capacity, higher spectral efficiency, higher energy efficiency, and better mobility \cite{albreem2019massive}. 
	Massive MIMO is a key and effective technology to meet the above requirements, where the base station (BS) is equipped with tens to hundreds of antennas, in contrast to the current BS equipped with only 4 to 8 antennas. A new challenge coming with the massive MIMO technology is the large problem size in signal processing and optimization. In particular, the MIMO detection problem of our interest in the massive MIMO setup is a \emph{large-scale} strongly NP-hard problem \cite{verdu1989computational}. As far as we know, there are very few works on the large-scale MIMO detection problem. One notable work is \cite{liu2017a}, which proposes a customized generalized power method (GPM) for solving the large-scale MIMO detection problem. The GPM directly solves problem \cref{P} and at each iteration, the algorithm takes a gradient descent step with an appropriate stepsize and projects the obtained point onto the (discrete) feasible set of problem \cref{P}.
	However, our experiments show that the performance of the GPM heavily depends on the choice of the initial point. Consequently, models and algorithms that can be generalized to the large-scale MIMO detection problem with satisfactory detection performance are still highly in need. 
	
	{\bf Contributions.} 
	The contributions of the paper are twofold. Firstly, we propose a sparse quadratic programming (SQP) formulation for the MIMO detection problem. We prove that, somewhat surprisingly, its relaxation obtained by dropping the sparse constraint is equivalent to the original formulation. Moreover, the relaxation formulation is able to recover the vector of transmitted
signals under condition \cref{Cond2}. Secondly, we present a projected Newton based quadratic penalty (PN-QP) method to solve the proposed (relaxation) formulation, which is demonstrated to be quite efficient in terms of detection accuracy and solution efficiency. Under reasonable assumptions on the channel matrix and noise, the sequence generated by PN-QP is guaranteed to converge to the vector of transmitted
signals.
	In particular, our extensive numerical results show that (i) compared to SD and MOSEK (for solving \cref{ERSDR1}), PN-QP is more efficient on massive MIMO detection; (ii) compared to GPM, PN-QP achieves significantly better detection performance than a recently proposed generalized power method. 
	
	Two key features of our proposed approach are highlighted as follows. Firstly, in sharp contrast to the matrix based SDRs, due to the vector based formulation for the MIMO detection problem, our relaxation is particularly suitable to deal with the large-scale MIMO detection problem. Secondly, by exploring the sparse structure of the optimal solution, the computational cost of PN-QP is significantly reduced. In particular, PN-QP is designed to identify the support set of the optimal solution rather than to find the solution itself, leading to a low computational cost.

	The rest of this paper is organized as follows.
    In \cref{sec-sparse QP}, we introduce different formulations for the MIMO detection problem, including the SQP formulation. In \cref{sec-tightness}, we discuss the relaxation problem and its properties. In \cref{sec-alg}, we present the PN-QP method and its convergence result.
    In \cref{sec-numerical}, we perform extensive numerical experiments to compare different algorithms for solving the MIMO detection problem. Finally, we conclude the paper in \cref{sec-conclusions}.
	
	We adopt the following standard notations in this paper.	
	Let $ \mathrm{i} $ denote the imaginary unit (satisfying $ \mathrm{i}^2=-1 $). 
	For a given complex vector $ x $, we use $ x_j $ to denote its $ j $-th entry, and $ |x_j| $ to denote the modulus of its $ j $-th entry.  Let $\|\cdot\|_2$ denote the $\ell_2$-norm for vectors and Frobenius norm for matrices. We use $ \|x\|_0 $ to denote the number of nonzero entries in vector $ x $.  Let $ \diag (X) $  denote the  vector formed by the diagonal elements in matrix $X$, and $\Diag (x)$ denote the diagonal matrix with the diagonal entries being vector $ x $. For matrices $ X_{11},\ldots,X_{nn}\in\mathbb{R}^{M\times M} $, we also use $ \Diag(X_{11},\ldots,X_{nn})\in\mathbb{R}^{nM\times nM} $ to denote the block-diagonal matrix whose $ (j,\,j) $-th block is $ X_{jj} $. For a complex matrix $C$, let $ \RE(C) $ and $ \IM(C) $  denote the real and imaginary parts of $ C $, respectively, and $ C^\dagger  $ and $ C^\top $ denote the conjugate transpose and transpose of $ C $, respectively. $C\succeq0 $ means $ C $ is positive semidefinite,  and $ \TR(C) $ denotes the trace of $ C $. Define the inner product for $ x,\,v\in\mathbb{C}^n $ as $ \langle x,\,v\rangle=\RE(x^\dagger v) $. For two Hermitian matrices $ A $ and $ B $,  the inner product is defined similarly as $ \langle A,\, B\rangle = \RE(\TR(A^\dagger B)) $.  Let $ \boldsymbol{e} $ be a vector of an appropriate length with all elements being one. 
	For a sequence $ \{x^k\} $, $ x^k\uparrow c $ and $ x^k\downarrow c $ mean that $ x^k $ tends to increasingly and decreasingly to a certain value $ c $, respectively.
	We use $\otimes$ to denote the Kronecker product. For $t\in \mathbb{R}^{nM} $, we assume that $t$ has the partition as $t=(\bar t_1^{\top},\ldots, \bar t_n^\top)^{\top}$, where $\bar t_j\in\mathbb{R}^M$ is the $j$-th block of $t$. Finally, the $k$-th entry in block $\bar t_j$ is denoted as $(\bar t_j)_k$.

	\section{Different Formulations for MIMO Detection}\label{sec-sparse QP}
	In this section, we introduce some formulations for the MIMO detection problem and discuss their properties. 
	
	Define
	\begin{equation}\label{YJIHE}
		\mathcal{Y}=\left\{ (\cos \theta_k,\,\sin\theta_k)\ \left|\ \theta_k=\frac{2(k-1)\pi}{M},\ k=1,\ldots,M\right. \right\} .
	\end{equation}
	Then, for each $ j=1,\ldots,n $, it is easy to see that $ x_j\in\mathcal{X} $ (defined in \cref{xj}) if and only if $ \left(\RE(x_j),\,\IM(x_j)\right) \in \mathcal{Y}. $
	The feasible points of $\mathcal{Y}$ for $M=4$ and $M=8$ are illustrated in \cref{figure-1}.
	\begin{figure}[htbp]
		\centering
		\subfigure[$ M=4 $.]{
			\begin{minipage}[t]{0.45\linewidth}
				\centering
				\begin{tikzpicture}[scale=0.9,line cap=round,line join=round,>=triangle 45,x=1cm,y=1cm]
				\begin{axis}[
				x=1cm,y=1cm,
				axis lines=middle,
				xmin=-2.5,
				xmax=2.5,
				ymin=-2.5168356882371397,
				ymax=2.6423023198306312,
				xtick={0,4,...,0},
				ytick={0,4,...,0},]
				\draw [line width=0.8pt] (0,0) circle (2cm);
				\draw (2,0) node[anchor=north west] {$1$};
				\begin{scriptsize}
				\draw [fill=xdxdff] (0,2) circle (3pt);
				\draw [fill=xdxdff] (-2,0) circle (3pt);
				\draw [fill=xdxdff] (0,-2) circle (3pt);
				\draw [fill=xdxdff] (2,0) circle (3pt);
				\end{scriptsize}
				\end{axis}
				\end{tikzpicture}
			\end{minipage}
		}
		\subfigure[$ M=8 $.]{
			\begin{minipage}[t]{0.45\linewidth}
				\centering
				\begin{tikzpicture}[scale=0.9,line cap=round,line join=round,>=triangle 45,x=1cm,y=1cm]
				\begin{axis}[
				x=1cm,y=1cm,
				axis lines=middle,
				xmin=-2.5,
				xmax=2.5,
				ymin=-2.5168356882371397,
				ymax=2.6423023198306312,
				xtick={0,4,...,0},
				ytick={0,4,...,0},]
				\clip(-3.3189305599459695,-2.5168356882371397) rectangle (3.4368637547331145,2.6423023198306312);
				\draw [line width=0.8pt] (0,0) circle (2cm);
				\draw (2,0) node[anchor=north west] {$1$};
				\begin{scriptsize}
				\draw [fill=xdxdff] (1.4142135623730951,1.414213562373095) circle (3pt);
				\draw [fill=xdxdff] (-1.414213562373095,1.4142135623730951) circle (3pt);
				\draw [fill=xdxdff] (-1.4142135623730954,-1.414213562373095) circle (3pt);
				\draw [fill=xdxdff] (1.4142135623730947,-1.4142135623730954) circle (3pt);
				\draw [fill=xdxdff] (0,2) circle (3pt);
				\draw [fill=xdxdff] (-2,0) circle (3pt);
				\draw [fill=xdxdff] (0,-2) circle (3pt);
				\draw [fill=xdxdff] (2,0) circle (3pt);
				\end{scriptsize}
				\end{axis}
				\end{tikzpicture}
			\end{minipage}
		}
		\centering
		\caption{An illustration of $\mathcal Y$.}
		\label{figure-1}
	\end{figure}
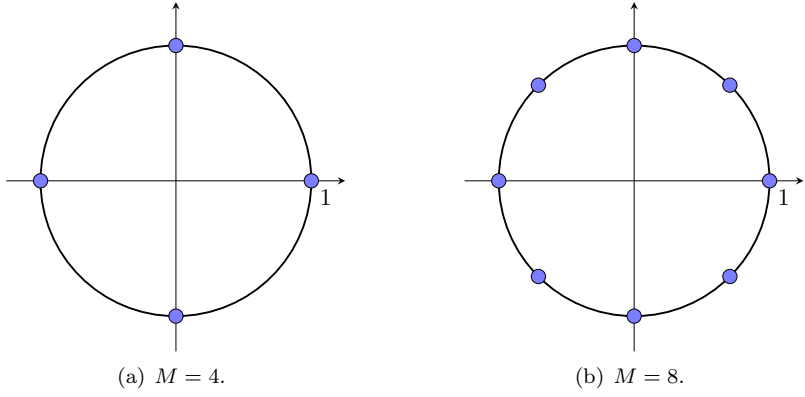

	Let
	\begin{equation}\label{DINGYI}
	\begin{aligned}
	\widehat Q=\begin{bmatrix}
	\RE(Q) & -\IM(Q)\\
	\IM(Q) & \RE(Q)
	\end{bmatrix}=({\hat q}_{jk})_{2n\times 2n},
	\ \hat{c}=\begin{bmatrix}
	\RE(c)\\
	\IM(c)
	\end{bmatrix}\text{, and }
	y=\begin{bmatrix}
	\RE(x)\\
	\IM(x)
	\end{bmatrix}\in\mathbb R^{2n}.
	\end{aligned}
	\end{equation}
	Problem \cref{CQP} can be equivalently written as the following real form:
	\begin{equation}
	\tag{{\rm{RQP}}}
	\label{RQP}
	\begin{aligned}
	& \underset{y\in {\mathbb{R}}^{2n}}{\min}
	& & y^\top\widehat Qy+2\hat{c}^\top y\\
	& \ \text{s.t.}
	& & \left(y_j,\,y_{n+j}\right) \in \mathcal Y,  \ j = 1, \ldots,n.
	\end{aligned}
	\end{equation}
		
	Let $ t=(\bar t_1^\top,\ldots,\,\bar t_n^\top)^\top\in\mathbb{R}^{nM} $ where $\bar t_j\in\mathbb R^M$ is the assignment variable corresponding to $(y_j,\,y_{n+j})$, i.e.,
	\[
	(\bar t_j)_k =\left\{\begin{array}{ll} 1, & \hbox{if }  (y_j,\,y_{n+j})=(\cos\theta_k,\,\sin\theta_k);\\
	0,& \hbox{otherwise.}
	\end{array}\right.
	\]
	By the above definition, the constraints in problem \cref{RQP} can be equivalently written as
	\[\begin{bmatrix}
	y_j\\
	y_{n+j}
	\end{bmatrix}=\sum_{k = 1}^M (\bar t_j)_k\begin{bmatrix}
	\cos\theta_k\\
	\sin\theta_k
	\end{bmatrix},\ \bar t_j\in \{0,\,1\}^M,\ \boldsymbol{e}^\top\bar t_j = 1,\ j = 1, \ldots,n.\]
	Then problem  \cref{RQP} can be equivalently written as
	\begin{equation}
	\label{RQP2}
	\begin{aligned}
	& \underset{y\in\mathbb R^{2n},\atop t\in\mathbb R^{nM}}{\min}
	& & y^\top\widehat Qy+2\hat{c}^\top y\\
	& \ \,\text{s.t.}
	& & y_j=\alpha^\top\bar t_{j},\ j=1, \ldots,n,\\
	&&& y_{n+j}=\beta^\top\bar t_{j},\ j=1, \ldots,n,\\
	&&& \boldsymbol{e}^\top\bar t_{j}=1,\ j=1, \ldots,n,\\
	&&& t\in\{0,\,1\}^{nM},
	\end{aligned}
	\end{equation}
	where
	\begin{equation}\label{def-alphabeta}
	\begin{aligned}
	\alpha&=\left(  \cos 0 ,\,\cos\left(\frac{2\pi}{M}\right) ,\ldots ,\,\cos\left(\frac{2(M-1)\pi}{M}\right) \right) ^\top\in\mathbb{R}^M\text{ and}\\
	\beta&=\left(  \sin 0 ,\,\sin\left(\frac{2\pi}{M}\right) ,\ldots ,\,\sin\left(\frac{2(M-1)\pi}{M}\right) \right) ^\top\in\mathbb{R}^M.
	\end{aligned}
	\end{equation}
	
	We now eliminate the variables $y_j$ for $ j=1,\ldots,2n $, based on the constraints in problem \cref{RQP2}. Let
	\begin{equation}\label{AB}
	\begin{aligned}
	A=I\otimes \alpha^\top\in {\mathbb{R}}^{n\times nM},\ B=I\otimes \beta^\top\in {\mathbb{R}}^{n\times nM}\text{, and }P={\begin{bmatrix}
		A\\B\end{bmatrix}}\in {\mathbb{R}}^{2n\times nM}.
	\end{aligned}
	\end{equation}
	We obtain the following quadratic assignment problem:
	\begin{equation}
	\tag{{\rm{QAP}}}
	\label{QAP}
	\begin{aligned}
	& \underset{t\in {\mathbb{R}}^{nM}}{\min}
	& & h(t)\triangleq t^\top Gt+2w^\top t\\
	& \ \,\text{s.t.}
	& & \boldsymbol{e}^\top\bar t_j=1,\ j=1, \ldots,n,\\
	&&& t\in\{0,\,1\}^{nM}, 
	\end{aligned}
	\end{equation}
	where
	\begin{equation}\label{Gw}
	G=P^\top\widehat QP\in\mathbb{R}^{nM\times nM}\text{ and }w=P^\top\hat{c}\in\mathbb{R}^{nM}.
	\end{equation}
	
	Inspired by the sparse formulation in \cite[(2.10)]{cui2018a}, we define the following SQP problem:
	\begin{equation}
	\tag{{\rm{SQP1}}}
	\label{SQP1}
	\begin{aligned}
	& \underset{t\in {\mathbb{R}}^{nM}}{\min}
	& & h(t)\\
	& \ \,\text{s.t.}
	& & \boldsymbol{e}^\top\bar t_j=1,\ j=1, \ldots,n,\\
	&&& t\geqslant 0,\\
	&&& \|t\|_0\leqslant n,
	\end{aligned}
	\end{equation}
	where $ \|t\|_0\leqslant n $ is the \emph{sparse constraint}, denoting that the sparsity (the number of nonzeros) of $t$ is not greater than $n$.
    We have the following result addressing the connection between problems \cref{QAP} and \cref{SQP1}.
    \begin{proposition}\label{qap-sqp}
    	Problems \cref{QAP} and \cref{SQP1} are equivalent.
    \end{proposition}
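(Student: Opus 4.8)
The plan is to prove that problems \cref{QAP} and \cref{SQP1} have exactly the same feasible set; since the two problems minimize the identical objective $h$, their equivalence (same optimal value and same set of global minimizers) then follows immediately, so the whole argument reduces to a set equality.

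First I would dispose of the easy inclusion, namely that every point feasible for \cref{QAP} is feasible for \cref{SQP1}. If $t\in\{0,1\}^{nM}$ and $\boldsymbol{e}^\top\bar t_j=1$ for each $j$, then certainly $t\geqslant 0$, and each block $\bar t_j$ is a standard unit vector, so $\|\bar t_j\|_0=1$ and hence $\|t\|_0=\sum_{j=1}^n\|\bar t_j\|_0=n\leqslant n$; thus the sparse constraint holds automatically.

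The substantive step is the reverse inclusion, which I would establish by a counting argument. Let $t$ be feasible for \cref{SQP1}. For each $j=1,\ldots,n$, the block $\bar t_j$ satisfies $\bar t_j\geqslant 0$ and $\boldsymbol{e}^\top\bar t_j=1>0$, hence $\bar t_j\neq 0$ and $\|\bar t_j\|_0\geqslant 1$; summing over the $n$ blocks gives $\|t\|_0=\sum_{j=1}^n\|\bar t_j\|_0\geqslant n$. Combined with the sparse constraint $\|t\|_0\leqslant n$, this forces $\|t\|_0=n$, and therefore $\|\bar t_j\|_0=1$ for every $j$. The single nonzero entry of $\bar t_j$ must then equal $\boldsymbol{e}^\top\bar t_j=1$, so $\bar t_j\in\{0,1\}^M$; consequently $t\in\{0,1\}^{nM}$ and $t$ is feasible for \cref{QAP}. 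Having shown the feasible sets coincide, I would conclude by noting that both problems optimize the same function $h$ over the same set.

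I do not expect a genuine obstacle here: the only point that needs to be stated carefully is the observation that nonnegativity together with the $n$ simplex-type equalities $\boldsymbol{e}^\top\bar t_j=1$ already guarantees at least $n$ nonzero entries, so the cardinality bound $\|t\|_0\leqslant n$ is necessarily tight at every feasible point and, being tight, automatically reinstates the binary structure that was dropped in passing from \cref{QAP} to \cref{SQP1}.
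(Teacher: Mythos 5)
Your proof is correct and follows essentially the same route as the paper: both directions are established by showing the feasible sets coincide, with the reverse inclusion resting on the counting argument that nonnegativity and the simplex equalities force at least one nonzero per block, so the bound $\|t\|_0\leqslant n$ is tight and each block is a unit vector. You simply spell out more explicitly the step the paper states tersely.
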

    \begin{proof}
    	For each feasible point $ t $ of problem \cref{QAP}, there is $ \|\bar t_j\|_0=1 $, implying that $ \|t\|_0\leqslant n $. Consequently, point $ t $ is feasible for problem \cref{SQP1}. On the other hand, for each feasible point $ t $ of problem \cref{SQP1}, it follows that $ \boldsymbol{e}^\top\bar t_j=1 $ for $ j=1,\ldots,n $, and $ \|t\|_0\leqslant n $, implying that $ \|\bar t_j \|_0 =1 $ for $ j=1,\ldots,n $. Therefore, each entry in $ \bar t_j $ must be either zero or one, i.e., $ t\in\{0,\,1\}^{nM} $. This shows that point $t$ is also feasible for problem \cref{QAP}. Therefore, problems \cref{QAP} and \cref{SQP1} are equivalent.
    \end{proof}
	
	By \cref{qap-sqp}, problem \cref{SQP1} is equivalent to the original problem \cref{P}. Specifically, if $x^*$ is a global minimizer of problem \cref{P}, then $t^*$ obtained by the following
	\begin{equation}\label{XT1}
	\begin{bmatrix}
	\RE(x^*)\\
	\IM(x^*)
	\end{bmatrix}=Pt^*
	=\begin{bmatrix}
	A\\
	B
	\end{bmatrix}t^*
	=\begin{bmatrix}
	At^*\\
	Bt^*
	\end{bmatrix}
	\end{equation}
	is a global minimizer of problem \cref{SQP1}. Conversely, for a global minimizer $ t^* $ of problem \cref{SQP1}, one can get a global minimizer $x^*$ of problem \cref{P} by
	\[x^*=At^*+\mathrm{i}Bt^*.\]
	This reveals that there is a one-to-one correspondence between the global minimizers of problem \cref{SQP1} and those of problem \cref{P}.
	
	Next, we partition the matrix $ G $ (defined in \cref{Gw}) as follows:
	\begin{equation}\label{G}
	\begin{aligned}
	{G}=\begin{bmatrix}
	S_{11}& S_{12} & \cdots & S_{1n}\\
	S_{21} & S_{22} & \cdots & S_{2n}\\
	\vdots & \vdots & \ddots & \vdots\\
	S_{n1} &  S_{n2} & \cdots & S_{nn}
	\end{bmatrix},
	\end{aligned}
	\end{equation}
	where $S_{jk}\in \mathbb{R}^{M\times M}$ for $ j,\ k=1, \ldots,n.$ Define a new matrix $\widetilde G\in \mathbb{R}^{nM\times nM}$ obtained by removing the diagonal blocks in $ G $, i.e., $\widetilde G \triangleq G -\widetilde D$. Here $ \widetilde D $ is the matrix with diagonal blocks $ S_{11},\ldots,\,S_{nn} $, denoted as
	\begin{equation}\label{tildeG}
	\widetilde{D}=\Diag(S_{11},\ldots,\,S_{nn}).
	\end{equation}
	We have the following result.

	\begin{theorem}\label{thm-sqp-sqp2}
		Problem \cref{SQP1} is equivalent to the following problem:
	\begin{equation}
	\tag{{\rm{SQP2}}}
	\label{SQP2}
	\begin{aligned}
	& \underset{t\in {\mathbb{R}}^{nM}}{\min}
	& & f(t)\triangleq t^\top\widetilde{G}t+2w^\top t\\
	& \ \,{\rm s.t.}
	& & \boldsymbol{e}^\top\bar t_j=1,\ j=1, \ldots,n,\\
	&&& t\geqslant 0,\\
	&&& \|  t \|_0\leqslant n .
	\end{aligned}
	\end{equation}
	\end{theorem}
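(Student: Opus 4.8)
The plan is to prove the equivalence in the strong sense that \cref{SQP1} and \cref{SQP2} have \emph{identical} feasible sets and that, on this common feasible set, the objectives $h$ and $f$ differ only by an additive constant determined by the data. Since the constraint systems of \cref{SQP1} and \cref{SQP2} are literally the same, the feasible sets coincide; call this set $\mathcal F$. The remaining task is then to evaluate
\[
h(t)-f(t)=t^\top(G-\widetilde G)\,t=t^\top\widetilde D\,t=\sum_{j=1}^{n}\bar t_j^\top S_{jj}\,\bar t_j
\]
for $t\in\mathcal F$, where the last equality uses the block-diagonal structure of $\widetilde D$ in \cref{tildeG}.

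First I would record the structural fact already exploited in the proof of \cref{qap-sqp}: any $t\in\mathcal F$ satisfies $t\geqslant 0$, $\boldsymbol e^\top\bar t_j=1$ and $\|t\|_0\leqslant n$, which together force $\|\bar t_j\|_0=1$ for each $j$. Hence every block $\bar t_j$ equals a standard unit vector, say $\bar t_j=e_{k_j}$ with $k_j\in\{1,\ldots,M\}$, so that $\bar t_j^\top S_{jj}\bar t_j=(S_{jj})_{k_jk_j}$ is simply a diagonal entry of the $j$-th diagonal block of $G$.

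The crux will be to understand these diagonal entries of $S_{jj}$. Using $G=P^\top\widehat Q P$ (see \cref{Gw}) with $P$ obtained by stacking $A=I\otimes\alpha^\top$ over $B=I\otimes\beta^\top$ (see \cref{AB} and \cref{DINGYI}), a direct block computation gives
\[
S_{jj}=\RE(Q)_{jj}\,(\alpha\alpha^\top+\beta\beta^\top)+\IM(Q)_{jj}\,(\beta\alpha^\top-\alpha\beta^\top).
\]
Because $Q=H^\dagger H$ is Hermitian, $Q_{jj}$ is real, so $\IM(Q)_{jj}=0$ (and even otherwise $\beta\alpha^\top-\alpha\beta^\top$ has zero diagonal). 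Thus $S_{jj}=Q_{jj}(\alpha\alpha^\top+\beta\beta^\top)$, and by the definitions of $\alpha,\beta$ in \cref{def-alphabeta} its $k$-th diagonal entry is $Q_{jj}(\alpha_k^2+\beta_k^2)=Q_{jj}(\cos^2\theta_k+\sin^2\theta_k)=Q_{jj}$, independent of $k$. Consequently, for every $t\in\mathcal F$,
\[
h(t)-f(t)=\sum_{j=1}^{n}Q_{jj}=\TR(Q),
\]
a fixed constant. Since $h$ and $f$ agree on $\mathcal F$ up to this constant, they share the same set of global minimizers over $\mathcal F$, which yields the equivalence.

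I expect the only genuinely nontrivial step to be the block identity for $S_{jj}$ together with the two elementary observations that $\IM(Q)_{jj}=0$ and $\alpha_k^2+\beta_k^2=1$; everything else simply reuses the sparsity argument behind \cref{qap-sqp}. In the write-up I would state the conclusion precisely: for every feasible $t$ one has $f(t)=h(t)-\TR(Q)$, so \cref{SQP2} is obtained from \cref{SQP1} by discarding the (on-$\mathcal F$ constant) contribution of the diagonal blocks of $G$; in particular the two problems have exactly the same optimal solutions and their optimal values differ by $\TR(Q)=\|H\|_2^2$.
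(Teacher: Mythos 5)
Your proposal is correct and follows essentially the same route as the paper: both arguments note that the two problems share the same feasible set, compute $S_{jj}=q_{jj}(\alpha\alpha^\top+\beta\beta^\top)$ so that $\diag(S_{jj})=q_{jj}\boldsymbol{e}$, and use the fact that each block $\bar t_j$ is a unit vector to conclude $h(t)-f(t)=\sum_j q_{jj}=\|H\|_2^2$ is constant on the feasible set. The only cosmetic difference is that you phrase the constant as $\TR(Q)$ while the paper writes it as $\|H\|_2^2$; these coincide.
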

	\begin{proof}
    The proof is relegated to \cref{app-thm-sqp-sqp2}. 
    \end{proof}
    
    Below, we give a property of the objective function $ f(t) $ in problem \cref{SQP2} stating that $ f(t) $ is a linear function with respect to $ \bar{t}_j $, which follows from the fact that the diagonal block in $ \widetilde{G} $ is zero. Such a property is similar to that in \cite[Proposition 3]{cui2018a} for hypergraph matching.
    \begin{proposition}\label{prop-linear}
    	For each block $ \bar t_j,\ j=1, \ldots,n, $ $ f(t) $ in problem \cref{SQP2} is a linear function of $ \bar t_j $, i.e., $ \nabla_{\bar t_j} {f}(t) $ is independent of $ \bar t_j $.
    \end{proposition}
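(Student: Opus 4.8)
The plan is to unwind the definitions and show that the gradient of $f$ with respect to a single block $\bar t_j$ does not involve the entries of $\bar t_j$ itself, which is exactly the claim that $f$ is affine (linear plus constant) in each block. First I would write $f(t) = t^\top \widetilde G t + 2w^\top t$ and use the block partition $t = (\bar t_1^\top, \ldots, \bar t_n^\top)^\top$ together with the block structure of $\widetilde G$. Since $\widetilde G = G - \widetilde D$ with $\widetilde D = \Diag(S_{11}, \ldots, S_{nn})$, the $(j,j)$-th block of $\widetilde G$ is the zero matrix, while its $(j,k)$-th block for $k \neq j$ equals $S_{jk}$. Expanding the quadratic form blockwise gives
\[
t^\top \widetilde G t = \sum_{j=1}^n \sum_{k=1}^n \bar t_j^\top (\widetilde G)_{jk} \bar t_k = \sum_{j \neq k} \bar t_j^\top S_{jk} \bar t_k,
\]
where the diagonal terms $\bar t_j^\top (\widetilde G)_{jj} \bar t_j$ vanish because $(\widetilde G)_{jj} = 0$.

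Next I would compute $\nabla_{\bar t_j} f(t)$ directly from this expansion. The linear term $2w^\top t = 2\sum_{k} \bar w_k^\top \bar t_k$ contributes $2\bar w_j$ (using the obvious block partition of $w$), which is a constant independent of all the $\bar t_k$. In the quadratic term, the blocks involving $\bar t_j$ are precisely those pairs $(j,k)$ with $k \neq j$ and those pairs $(k,j)$ with $k \neq j$; differentiating $\sum_{k \neq j} \bar t_j^\top S_{jk} \bar t_k + \sum_{k \neq j} \bar t_k^\top S_{kj} \bar t_j$ with respect to $\bar t_j$ yields $\sum_{k \neq j} (S_{jk} + S_{kj}^\top) \bar t_k$. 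Crucially, every term on the right-hand side involves only blocks $\bar t_k$ with $k \neq j$, so no $\bar t_j$ appears. Hence $\nabla_{\bar t_j} f(t)$ depends only on $\{\bar t_k : k \neq j\}$ and on the fixed data $S_{jk}$, $\bar w_j$, establishing that $f$ is a linear (affine) function of $\bar t_j$ when the other blocks are held fixed.

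There is essentially no obstacle here: the entire content is the observation that killing the diagonal blocks of $G$ removes exactly the quadratic-in-$\bar t_j$ contributions. The only point requiring mild care is bookkeeping — making sure that when differentiating the double sum with respect to $\bar t_j$ one collects both the $(j,k)$ and $(k,j)$ contributions and correctly excludes $k = j$. I would also note in passing (or leave implicit) that symmetry of $G$, hence of $\widetilde G$, lets one write the gradient more cleanly as $2\sum_{k \neq j} S_{jk} \bar t_k + 2\bar w_j$, but this is not needed for the statement. This mirrors the argument in \cite[Proposition 3]{cui2018a}, and I would simply remark on that parallel rather than re-deriving anything further.
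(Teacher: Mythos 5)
Your proof is correct and follows exactly the reasoning the paper itself invokes (the paper only sketches it in the sentence preceding the proposition, attributing the claim to the fact that the diagonal blocks of $\widetilde G$ vanish by construction). Your blockwise expansion and gradient computation simply fill in the routine details of that same observation, so there is nothing to add.
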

	Due to \cref{prop-linear}, given a particular block $\bar t_j$, the function $f(\cdot)$ can be written as
	\begin{equation}\label{eq-f}
	f(t) = \nabla _{\bar t_j}f(t)^\top \bar t_j + f^{-j}(t_{-j}), \ \ t_{-j} \triangleq (\bar t_1,\cdots, \bar t_{j-1}, \bar t_{j+1},\cdots,\bar t_n)^\top\in\mathbb{R}^{(n-1)M}.
	\end{equation}
	Here  $ \nabla_{\bar t_j} f(t)$ is only related to $t_{-j}$,  and   $ f^{-j}(t_{-j})$ represents the part in $f(\cdot)$ which is only related to $  t_{-j}$.
	 
	\section{Relaxation for MIMO Detection}\label{sec-tightness}
	In this section, we first show the equivalence between problem \cref{SQP2} and its relaxation problem obtained by dropping the sparse constraint. 
	Then we present the properties of the relaxation problem as well as its relations to SDRs.
	
	\subsection{Relaxation of Problem \cref{SQP2}}
	By dropping the sparse constraint in problem \cref{SQP2}, i.e., $ \|t \|_0\leqslant n $, we get the following relaxation problem:
	\begin{equation}
	\tag{{\rm{RSQP}}}
	\label{RSQP}
	\begin{aligned}
	& \underset{t\in {\mathbb{R}}^{nM}}{\min}
	& & f(t)\\
	& \ \text{s.t.}
	& & \boldsymbol{e}^\top\bar t_j=1,\ j=1, \ldots,n,\\
	&&& t\geqslant 0.
	\end{aligned}
	\end{equation}
	The following shows that problem \cref{RSQP} is actually equivalent to problem \cref{SQP2}. 
		\begin{theorem}\label{thm-global}
		There exists a global minimizer $ t^* $ of problem \cref{RSQP} such that $ \|t^*\|_0=n $. As a result, $ t^* $ is a global minimizer of problem \cref{SQP2}. 
	\end{theorem}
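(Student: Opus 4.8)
The plan is to exploit the key structural fact established in \cref{prop-linear}: since the diagonal blocks of $\widetilde G$ vanish, the objective $f(t)$ is \emph{linear} in each block $\bar t_j$ when the other blocks are held fixed. This block-coordinate linearity, combined with the fact that the feasible region of \cref{RSQP} is a product of simplices $\Delta_j = \{\bar t_j \geqslant 0 : \boldsymbol e^\top \bar t_j = 1\}$, is exactly what we need: a linear function over a simplex attains its minimum at a vertex, and the vertices of $\Delta_j$ are precisely the standard basis vectors $e_k \in \mathbb R^M$, i.e. the points with $\|\bar t_j\|_0 = 1$.

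Concretely, I would argue as follows. First, since the feasible set of \cref{RSQP} is compact (a product of simplices) and $f$ is continuous, a global minimizer $\hat t$ exists. Next, I perform a block-by-block ``sparsification'' step: starting from $\hat t$, fix all blocks except $\bar t_1$. By \cref{prop-linear} (see \cref{eq-f}), $f$ restricted to $\bar t_1 \in \Delta_1$ is affine, so $\min_{\bar t_1 \in \Delta_1} \nabla_{\bar t_1} f(\hat t)^\top \bar t_1$ is attained at some vertex $e_{k_1}$ of $\Delta_1$; replacing $\bar t_1$ by $e_{k_1}$ does not increase the objective and keeps feasibility. Repeating this for $j = 2, \ldots, n$ in turn — noting that after processing block $j$ the gradient $\nabla_{\bar t_{j+1}} f$ depends only on the (already modified) other blocks, so the argument is unaffected — produces a feasible point $t^*$ with $\|\bar t_j\|_0 = 1$ for every $j$, hence $\|t^*\|_0 = n$, and $f(t^*) \leqslant f(\hat t)$. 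Since $\hat t$ was a global minimizer, $t^*$ is also a global minimizer of \cref{RSQP}. Finally, because $t^*$ additionally satisfies $\|t^*\|_0 = n \leqslant n$, it is feasible for \cref{SQP2}; and since \cref{SQP2}'s feasible set is contained in that of \cref{RSQP}, the optimal value of \cref{SQP2} is at least that of \cref{RSQP}, so $t^*$ — attaining the \cref{RSQP} optimum while being \cref{SQP2}-feasible — is a global minimizer of \cref{SQP2} as well. This also re-proves the equivalence of optimal values asserted implicitly in the theorem.

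One subtlety worth spelling out is that the sequential sparsification must be done carefully so that fixing later blocks does not ``un-sparsify'' earlier ones — but this is automatic here because the linearity in \cref{prop-linear} is in each block \emph{separately} and the update of block $j$ only alters the $j$-th block, leaving blocks $1, \ldots, j-1$ at their already-chosen vertices. Alternatively, one can avoid the sequential argument entirely: observe that the feasible set of \cref{RSQP} is a polytope whose vertices are exactly the points with $\|\bar t_j\|_0 = 1$ for all $j$ (a product of simplices has vertices equal to products of vertices), and that $f$, while quadratic in $t$ globally, is linear along every edge of this polytope by \cref{prop-linear}; hence the minimum of $f$ over the polytope is attained at a vertex. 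I would likely present the sequential version since it is the most transparent and directly reuses \cref{eq-f}.

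The main obstacle, such as it is, is not conceptual but bookkeeping: one must confirm that \cref{prop-linear} gives linearity of $f$ in $\bar t_j$ \emph{as a function on all of $\mathbb R^M$} (not merely on the affine hull of the simplex), so that ``linear function minimized over a simplex is minimized at a vertex'' applies verbatim — this is exactly the content of \cref{prop-linear} together with the representation \cref{eq-f}, since $\nabla_{\bar t_j} f(t)$ being independent of $\bar t_j$ means $f(t) = \nabla_{\bar t_j} f(t)^\top \bar t_j + f^{-j}(t_{-j})$ holds identically. A second minor point is to make sure the existence of a global minimizer is invoked before the sparsification (compactness of the product of simplices handles this). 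Beyond these, the proof is short.
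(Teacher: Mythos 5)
Your proposal is correct and follows essentially the same route as the paper: both arguments rest on \cref{prop-linear} (block-wise linearity of $f$) together with the fact that a linear function over a simplex attains its minimum at a vertex, and both sparsify a global minimizer block by block without increasing the objective. Your explicit treatment of the second claim (that $t^*$ is then optimal for \cref{SQP2}, via feasible-set containment) fills in a step the paper leaves implicit, but introduces no new idea.
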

	
\begin{proof} 
 We proceed the proof by showing that for problem \cref{RSQP}, there   exists a global optimal solution $ t^* $ such that each block $ \bar t^*_j $ is an extreme point of  the simplex set $\Omega$  defined by $\Omega= \{\nu\in\mathbb{R}^M\mid \nu^\top \boldsymbol{e}=1,\,\nu\geqslant0\}.
$
	Let $ t^\circ $ be a global optimal solution of problem \cref{RSQP}. Suppose that there exists one block as $ \bar t^\circ_{j'} $, such that $ \bar t^\circ_{j'} $ is not an extreme point of $ \Omega $ (i.e., $ \|\bar t^\circ_{j'}\|_0>1 $). Clearly, point $\bar t^\circ_{j'}$ is an optimal solution of the linear programming problem with simplex constraint
	\begin{equation}\label{r-2}
	\underset{\nu\in\Omega}{\min}\,  \nabla _{\bar t_{j'}}f(t^\circ)^\top \nu + f^{-j'}(t^\circ_{-j'}),
	\end{equation}
	where $ \bar t_{-j'}^\circ$ is defined similarly as in \cref{eq-f}. 
	From the basic linear programming theory, there must exist an extreme point $ \nu^1\in\Omega $, such that $ \nu^1 $ is an optimal solution of problem \cref{r-2}. Then we must have
	$
	\nabla _{\bar t_{j'}}f(t^\circ)^\top \nu^1 + f^{-j'}(t^\circ_{-j'})=f( t^\circ).
	$
	Define a new point $ t^1\in\mathbb{R}^{nM} $ by
	$
	\bar t^1_{j'}=\nu^1,\ \bar t^1_j=\bar t^\circ_j,\ j\neq j',\ j=1,\ldots,\,n.
	$
	We have
	$
	f(t^1)=f(t^\circ)
	$
	and, hence $ t^1 $ is a global minimizer of problem \cref{RSQP}. If all of the blocks in $ t^1 $, i.e., $ \bar t^1_j $, $ j=1,\ldots,\,n $, are extreme points of the set $ \Omega $, let $ t^*=t^1 $. The proof is finished. Otherwise, repeat the above process. 
    After at most $ k $ steps ($ k\leqslant n $), one will reach a global minimizer $ t^*\triangleq t^k $, such that all of the blocks in $ t^* $ are extreme points of the set $ \Omega $.   This completes the proof. 
    \end{proof}

\begin{remark}
The proof of \cref{thm-global} makes use of the properties of linear programming. Another way to prove the result is to apply Corollary 2 in \cite{cui2018a} as well as \cref{prop-linear}.
\end{remark}
    \begin{remark}\label{rem-rounding}
	Suppose that one gets a global minimizer of problem \cref{RSQP}, denoted as $t^\circ\in\mathbb R^{nM}$. As pointed out in \cite[Remark 3]{cui2018a}, we can get a global minimizer $ t^\divideontimes $ of problem \cref{SQP2} in the following way. For each block $ \bar t^\divideontimes_j $, pick up any nonzero entry in $ \bar t^\circ_j $, say $ p_j $, and set
	\begin{equation}\label{alg-0}
	\left(\bar t^\divideontimes_j \right)_{p_j}=1\text{, and }\left(\bar t^\divideontimes_j \right)_l=0,\ l\in\{1,\ldots,M\}\backslash p_j.
	\end{equation}
	Notice that $\bar t_j^\divideontimes$ corresponds to an extreme point of the simplex $\Omega$, which is an optimal solution of problem \cref{r-2} with $j'$ replaced by $ j$.
	Repeatedly applying the above rounding procedure, we will obtain a global minimizer of problem \cref{SQP2}.
	\end{remark}

   \begin{remark}\label{remark-new1} From the strong NP-hardness of problem \cref{P} and the equivalence between problems \cref{SQP2,RSQP} (cf. \cref{thm-global}), problem \cref{RSQP} is also strongly NP-hard. However, problem \cref{RSQP} enjoys more advantages than problem \cref{P}. Firstly, problem \cref{RSQP} is a continuous optimization problem so that the local information can be used to design efficient algorithms whereas problem \cref{P} is a discrete optimization problem. Moreover, the feasible region of problem \cref{RSQP} is described by simplex constraints, which are relatively simple. In terms of the objective function, although it is nonconvex, it is a quadratic function. In  particular, we have shown in \cref{prop-linear} that it is a linear function for each block $\bar t_j$ (with all the others being fixed). Consequently, such a special structured nonlinear programming problem with simplex constraints provides us more freedom to explore various numerical algorithms to solve the problem. In other words, by transforming the discrete problem \cref{P} into the continuous optimization problem \cref{RSQP}, we can make full use of various techniques and algorithms in nonlinear optimization.
    \end{remark}

\subsection{Properties of Relaxation Problem \cref{RSQP}}
An interesting question is that under which condition, the relaxation problem \cref{RSQP} admits a unique global minimizer, which corresponds to the vector of transmitted
signals $x^*$ in \cref{MIMO} by \cref{XT1}.  To answer this question,  we first characterize the condition under which, problem (RSQP) admits a unique global minimizer. 	
	\begin{theorem}\label{thm-unique-rsqp2}
		Suppose that $ t^* $ is the unique global optimal solution of problem \cref{SQP2}. Then, $ t^* $ is the unique global minimizer of problem \cref{RSQP}.
	\end{theorem}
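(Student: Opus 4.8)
The plan is to reduce the uniqueness claim to the single fact that every global minimizer of \cref{RSQP} is automatically feasible for \cref{SQP2}. First I would record that the two problems share the same optimal value: by \cref{thm-global} there is a global minimizer $\hat t$ of \cref{RSQP} with $\|\hat t\|_0=n$, so $\hat t$ is feasible for \cref{SQP2}; since \cref{RSQP} is obtained from \cref{SQP2} by dropping the sparse constraint, the optimal value $v$ of \cref{RSQP} is at most that of \cref{SQP2}, and the point $\hat t$ shows the two values coincide. In particular $t^*$, being feasible for \cref{RSQP} with $f(t^*)=v$, is itself a global minimizer of \cref{RSQP}, so only uniqueness remains.

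Next, let $t^\circ$ be an arbitrary global minimizer of \cref{RSQP}; I would show that each block $\bar t^\circ_j$ has exactly one nonzero entry. Suppose not, say $\bar t^\circ_{j'}$ has nonzeros in two distinct positions $p$ and $q$. By \cref{prop-linear} and \cref{eq-f}, with all other blocks held fixed the block $\bar t^\circ_{j'}$ solves the linear program $\min\{\nabla_{\bar t_{j'}}f(t^\circ)^\top\nu : \boldsymbol{e}^\top\nu=1,\ \nu\geqslant 0\}$; since $p$ and $q$ lie in the support of the optimal point $\bar t^\circ_{j'}$, the $p$-th and $q$-th unit vectors of $\mathbb{R}^M$ are each optimal for this linear program, so replacing $\bar t^\circ_{j'}$ by either of them keeps $f$ equal to $v$. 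Now I would run the rounding/iteration argument from the proof of \cref{thm-global} (see also \cref{rem-rounding}): starting from $t^\circ$ and first setting block $j'$ equal to the $p$-th unit vector, round the remaining blocks one at a time—each step preserving global optimality for \cref{RSQP} by the same linear-programming reasoning—to reach a point $t^{(p)}$ all of whose blocks are unit vectors; doing the same but starting with block $j'$ set to the $q$-th unit vector yields $t^{(q)}$. Both $t^{(p)}$ and $t^{(q)}$ are feasible for \cref{SQP2} and attain the value $v$, hence both are global minimizers of \cref{SQP2}, yet they differ in block $j'$ (block $j'$ is never touched again after it is rounded), contradicting the assumed uniqueness of $t^*$.

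Consequently every block of $t^\circ$ has exactly one nonzero entry—it cannot be all zero since $\boldsymbol{e}^\top\bar t^\circ_j=1$—so $t^\circ\in\{0,1\}^{nM}$ with $\|t^\circ\|_0=n$; thus $t^\circ$ is feasible for \cref{SQP2} and $f(t^\circ)=v$, making $t^\circ$ a global minimizer of \cref{SQP2}, whence $t^\circ=t^*$. This shows $t^*$ is the unique global minimizer of \cref{RSQP}. The one delicate point is the middle paragraph: one must verify that collapsing a non-vertex block onto one of its support coordinates does not change the objective value (this is exactly where the block-linearity of $f$ from \cref{prop-linear} is essential) and that the two competing roundings genuinely yield distinct \cref{SQP2}-minimizers; everything else is the bookkeeping already carried out in the proof of \cref{thm-global}.
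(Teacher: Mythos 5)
Your proposal is correct and follows essentially the same route as the paper: both arguments take a putative second global minimizer of \cref{RSQP}, locate a block with at least two nonzero support coordinates, and use the block-linearity of $f$ together with the rounding procedure of \cref{rem-rounding} to manufacture two distinct global minimizers of \cref{SQP2}, contradicting the assumed uniqueness of $t^*$. Your write-up is somewhat more explicit than the paper's (in justifying via the simplex linear program that both unit vectors $e_p$ and $e_q$ are optimal for the block, and that the two roundings remain distinct), but the underlying argument is the same.
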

	\begin{proof}
		We use the contradiction argument. Assume that $ t^* $ is not the unique global minimizer of problem \cref{RSQP}, then there must exist another global minimizer $ t^\circ $ of problem \cref{RSQP} such that $ t^\circ \neq t^* $. This, together with the assumption that $ t^* $ is  the unique global minimizer of problem \cref{SQP2} and $\boldsymbol{e}^T \bar t^\circ_j = 1 $, implies that $ \|t^\circ \|_0 \geqslant n+1 $ must hold, and hence there must exist a block $\bar t^\circ_j$ such that $\|\bar t^\circ_j\|_0 \geqslant 2 $. Without loss of generality, let $ \|\bar t^\circ_1 \|_0 \geqslant 2 $, $ (\bar t^\circ_1)_1 >0 $, and $ (\bar t^\circ_1)_2 >0 $.  Applying the rounding procedure in \cref{alg-0} by setting
		\[
		(\bar t^\circ_1)_j=\left\{\begin{array}{ll} 1,&\text{if }j=1;\\
		0,&\text{if }j=2,\ldots,M,
		\end{array}\right.\text{ and }(\bar t^\circ_1)_j=\left\{\begin{array}{ll} 1,&\text{if }j=2;\\
		0,&\text{if }j=1,3,\ldots,M,
		\end{array}\right.
		\]
		respectively, we will obtain two different global minimizers of problem \cref{RSQP}. Repeatedly applying the rounding procedure in \cref{alg-0} to other blocks of these two points, we can obtain two \emph{different} global minimizers of problem \cref{SQP2}, which contradicts with the assumption that $ t^* $ is the unique global minimizer of problem \cref{SQP2}.
		Consequently, $ t^* $ is the unique global minimizer of problem \cref{RSQP}.
	\end{proof}

		\Cref{thm-unique-rsqp2} implies that if the vector of transmitted signals $ x^* $ is the unique global minimizer of problem \cref{P}, then the corresponding $ t^* $ obtained via \cref{XT1} is a unique global minimizer of problem \cref{RSQP}. The remaining question is under which condition, $t^*$ is the unique global minimizer of problem \cref{SQP2}. To address this question, we need the definition of tightness and the enhanced SDR in \cite{lu2019tightness}.
 \begin{definition}\label{def-tightness}
		An SDR of problem \cref{P} is called tight if the following two conditions hold: the gap between the SDR and problem \cref{P} is zero; and the SDR recovers the true vector of transmitted signals.
	\end{definition}
	
	The enhanced SDR in \cite{lu2019tightness} is briefly described as follows:
    \begin{equation}
    \tag{{\rm{ERSDR1}}}
    \label{ERSDR1}
    \begin{aligned}
    &\underset{y\in {\mathbb{R}}^{2n},\ t\in {\mathbb{R}}^{nM},\atop Y\in {\mathbb{R}}^{2n\times 2n}}{\min}
    & & \langle\widehat Q,\, Y\rangle+2\hat{c}^\top y \\
    & \qquad \text{s.t.}
    & & \mathbf{Y}(j)=\sum\limits_{k=1}^{M}(\bar t_j)_kU_k,\ j=1,\ldots,n,\\
    &&&  \sum\limits_{k=1}^{M}(\bar t_j)_k=1,\ j=1,\ldots,n,\\
    &&& \begin{bmatrix}
    1 & y^\top\\
    y & Y
    \end{bmatrix}\succeq 0,\\
    &&& t\geqslant 0,
    \end{aligned}
    \end{equation}
	where $\widehat Q$ is defined as in \cref{DINGYI}, 
	$$
	\begin{aligned}
	\mathbf{Y}(j)=\begin{bmatrix}
	1 & y_j & y_{n+j}\\
	y_j & Y_{jj} & Y_{j(n+j)}\\
	y_{n+j} & Y_{(n+j)j} & Y_{(n+j)(n+j)}
	\end{bmatrix},\ j=1,\ldots,n,
	\end{aligned}
	$$
	and
	\[\begin{aligned}
	U_k=\begin{bmatrix}
	1\\ \cos\theta_k\\ \sin\theta_k
	\end{bmatrix}\begin{bmatrix}1& \cos\theta_k& \sin\theta_k\end{bmatrix}, \ k = 1,\ldots,M.
	\end{aligned}
	\]
	We have the following result.
	\begin{theorem}\label{thm-unique-add} Let $t^*\in \mathbb{R}^{nM}$ be the vector corresponding to the vector of transmitted
signals $x^*\in \mathbb{C}^n$ in \cref{MIMO}.
If condition \cref{Cond2} holds, $t^*$ is a unique global minimizer of problem {\cref{SQP2}}.
\end{theorem}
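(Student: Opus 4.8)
The plan is to reduce the claim to the corresponding uniqueness statement for problem \cref{P} and then transport it along the chain of equivalences established in \cref{sec-sparse QP}. Recall from \cref{qap-sqp} and the discussion following it that the assignment map $t\mapsto x=At+\mathrm{i}Bt$ is a bijection between the feasible set of \cref{SQP1} and that of \cref{P} which identifies $h(t)$ with $F(x)$ up to an additive constant, and hence carries minimizers to minimizers; moreover it is injective on feasible points because the $M$ constellation points in $\mathcal X$ are distinct, so distinct feasible (block-indicator) assignment vectors yield distinct signal vectors. By \cref{thm-sqp-sqp2}, \cref{SQP1} and \cref{SQP2} share the same feasible set, on which their objectives differ only by an additive constant (namely $t^\top\widetilde D t$, which equals $\TR(Q)$ there since each diagonal block $S_{jj}$ has diagonal identically equal to $Q_{jj}$). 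Consequently, it suffices to prove: under condition \cref{Cond2}, $x^*$ is the unique global minimizer of problem \cref{P}; the two bijections above then force $t^*$ to be the unique global minimizer of \cref{SQP1}, hence of \cref{SQP2}.

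To establish that $x^*$ is the unique global minimizer of \cref{P}, I would argue directly. Take any feasible $x\neq x^*$ and set $d=x-x^*$. Substituting $r=Hx^*+v$ gives
\[
F(x)-F(x^*)=d^\dagger H^\dagger H d-2\RE\!\big((H^\dagger v)^\dagger d\big).
\]
The quadratic term is at least $\lambda_{\min}(H^\dagger H)\,\|d\|_2^2$, while the linear term obeys $\left|\RE\!\big((H^\dagger v)^\dagger d\big)\right|\le\|H^\dagger v\|_\infty\,\|d\|_1$. The decisive geometric fact is that both $x_j$ and $x^*_j$ lie on the $M$-PSK constellation, so whenever $d_j\neq0$ the chord length satisfies $|d_j|\ge 2\sin(\pi/M)$ (the distance between two adjacent constellation points), whence $|d_j|^2\ge 2\sin(\pi/M)\,|d_j|$ and, summing over $j$, $\|d\|_2^2\ge 2\sin(\pi/M)\,\|d\|_1$. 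Combining the three bounds yields
\[
F(x)-F(x^*)\ge 2\|d\|_1\Big(\lambda_{\min}(H^\dagger H)\sin(\pi/M)-\|H^\dagger v\|_\infty\Big),
\]
which is strictly positive under \cref{Cond2} since $x\neq x^*$ forces $\|d\|_1>0$. Hence $x^*$ is the unique global minimizer of \cref{P}, and the reduction above completes the proof. (Equivalently, one may invoke \cite{lu2019tightness}: condition \cref{Cond2} makes \cref{ERSDR1} tight in the sense of \cref{def-tightness}, and a second minimizer $\hat x\neq x^*$ of \cref{P} would lift to the rank-one optimal solution $(\hat y,\hat t,\hat y\hat y^\top)$ of \cref{ERSDR1} with $\hat y=P\hat t$, from which one reads off $\hat x\neq x^*$, contradicting that \cref{ERSDR1} recovers the true signal.)

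The heart of the argument is the uniqueness of $x^*$ for \cref{P}; everything else is bookkeeping with the equivalences from \cref{qap-sqp} and \cref{thm-sqp-sqp2}. Within that step the only non-routine ingredient is the $M$-PSK minimum-distance estimate $|x_j-x^*_j|\ge 2\sin(\pi/M)$, which is precisely what upgrades the $\ell_2$ lower bound on the quadratic term to an $\ell_1$ lower bound matching the $\ell_\infty$ upper bound on the noise term — and this is exactly where the factor $\sin(\pi/M)$ in \cref{Cond2} is consumed. A subtlety to keep in mind is the injectivity of the assignment correspondence noted above, without which a second minimizer of \cref{SQP2} need not give rise to a second minimizer of \cref{P}.
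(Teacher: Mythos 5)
Your proof is correct, but it takes a genuinely different route from the paper's. The paper proves this theorem by citing the machinery of \cite{lu2019tightness}: under condition \cref{Cond2}, problem \cref{ERSDR1} is tight and admits a unique optimal solution corresponding to $x^*$ (Theorems 4.2, 4.4 and Corollary 4.3 there), from which uniqueness for \cref{P}, and hence for \cref{SQP2}, follows — essentially the parenthetical alternative you sketch at the end. You instead give a direct, self-contained argument for the uniqueness of $x^*$ as the global minimizer of \cref{P}: the expansion $F(x)-F(x^*)=d^\dagger H^\dagger H d-2\RE\bigl((H^\dagger v)^\dagger d\bigr)$, the eigenvalue bound on the quadratic term, the H\"older-type bound $\bigl|\RE\bigl((H^\dagger v)^\dagger d\bigr)\bigr|\le\|H^\dagger v\|_\infty\|d\|_1$ (which is in fact sharper than the paper's \cref{lem-2}, which carries an extra $\sqrt2$), and the $M$-PSK minimum-distance estimate $|d_j|\ge 2\sin(\pi/M)$ converting $\|d\|_2^2$ into $2\sin(\pi/M)\|d\|_1$ are all correct, and the bookkeeping transferring uniqueness from \cref{P} to \cref{SQP2} via the feasible-set bijection and the additive constants $\|r\|_2^2$ and $\|H\|_2^2$ is sound. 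What your approach buys is a proof that does not depend on the SDR tightness results of \cite{lu2019tightness} at all — it only needs the weaker statement that $x^*$ uniquely minimizes \cref{P} over the \emph{discrete} feasible set, not that the rank-one point uniquely solves the semidefinite relaxation — so it is more elementary and keeps the paper self-contained; what the paper's citation-based route buys is brevity and an explicit link to the tightness literature, which the authors also exploit elsewhere (e.g., in the remark that \cref{RSQP} is tight under \cref{Cond2}).
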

\begin{proof}Note that under condition \cref{Cond2}, problem \cref{ERSDR1} is tight \cite[Theorem 4.4]{lu2019tightness}. By the proof in \cite[Theorem 4.2, Corollary 4.3, Theorem 4.4]{lu2019tightness}, problem \cref{ERSDR1} admits a unique optimal solution, which corresponds to the vector of transmitted
signals $x^*$ in \cref{MIMO}. This, together with the tightness of problem \cref{ERSDR1} and the fact that problem \cref{ERSDR1} is a relaxation of problem \cref{P}, shows that $x^*$ is also a unique solution of problem \cref{P}. Equivalently, under condition \cref{Cond2}, $t^*$ is also a unique global minimizer of problem \cref{SQP2}. 
\end{proof}

\begin{remark}
 \cref{thm-unique-rsqp2,thm-unique-add} imply that under condition \cref{Cond2}, problem \cref{RSQP} is also tight.
\end{remark}

    We illustrate several formulations for the MIMO detection problem in \cref{fig:2a}, which demonstrates the equivalence between problems \cref{P}, \cref{CQP}, \cref{RQP}, \cref{SQP1}, \cref{SQP2}, as well as \cref{RSQP}.  	
    
    \begin{figure}[htbp]
    	\centering
    	\begin{tikzpicture}[line cap=round,line join=round,>=triangle 45,x=1cm,y=1cm]
    	\tikzstyle{every node}=[font=\small,scale=0.85]
    	\draw (-2.48,2.76) node[anchor=north west] {$\cref{P} \Leftrightarrow \cref{CQP} \xLongleftrightarrow[]{Real form} \cref{RQP} \xLongleftrightarrow[sparse\ constraint]{Introducing} \cref{SQP1} \xLongleftrightarrow[]{} \cref{SQP2} \xLongleftrightarrow[]{Relaxation} \cref{RSQP}$};
    	\end{tikzpicture}
    	\centering
    	\caption{The map of equivalent formulations.}
    	\label{fig:2a}
    \end{figure}
    It should be emphasized that problem \cref{RSQP} is a vector based formulation and its size is much smaller (than that of SDRs for problem \cref{P}), and thus it is more suitable to be used for designing algorithms for the large-scale problems. More detailed comparisons between problem \cref{RSQP} and various SDRs will be shown in the next subsection.

\subsection{Relations to the SDRs}\label{sec-extensions}
Recall that the enhanced SDR studied in \cite{lu2019tightness} is \emph{tight} under condition \cref{Cond2}. In fact, we can also show the tightness result of the SDR of our proposed formulation \cref{SQP2} under the same condition. It is easy to check that the following SDR of problem \cref{QAP} proposed in \cite{mobasher2007a}
	\begin{equation}
	\tag{{\rm{ERSDR2}}}
	\label{ERSDR2}
	\begin{aligned}
		& \underset{T\in\mathbb R^{nM\times nM},\atop t\in\mathbb R^{nM}}{\min}
		& & \bar{f}_1(T,\,t)\triangleq \langle G,\, T\rangle+2w^\top t \\
		& \qquad\text{s.t.}
		& & \boldsymbol{e}^\top\bar t_j = 1,\ j=1, \ldots,n,\\
		&&& T_{jj}=\Diag(\bar t_j), \ j=1, \ldots,n,\\
		&&& T\succeq tt^\top,\ t\geqslant 0,
	\end{aligned}
	\end{equation}
	is equivalent to the following SDR of problem \cref{SQP2}
	\begin{equation}
	\tag{{\rm{ERSDR3}}}
	\label{ERSDR3}
	\begin{aligned}
		& \underset{T\in\mathbb R^{nM\times nM},\atop t\in\mathbb R^{nM}}{\min}
		& & \bar{f}_2(T,\,t)\triangleq\langle\widetilde{G},\, T\rangle+2w^\top t \\
		& \qquad \text{s.t.}
		& & \boldsymbol{e}^\top\bar t_j = 1,\ j=1, \ldots,n,\\
		&&& T_{jj}=\Diag(\bar t_j),\ j=1, \ldots,n,\\
		&&& T\succeq tt^\top,\ t\geqslant 0,
	\end{aligned}
	\end{equation}
		where  $ T_{jj}\in \mathbb{R}^{M\times M} $ is the $j$-th diagonal block of $ T $.
With \cite[Theorem 2]{liu2019on}, problem \cref{ERSDR3} is tight for problem \cref{P} under condition \cref{Cond2} for $M\geqslant2$.

	Now, the relations between the series of ``ERSDRs'' and other formulations discussed above can be summarized in \cref{fig:4}. Problems \cref{ERSDR1}, \cref{ERSDR2}, and \cref{ERSDR3} are SDRs of problems \cref{RQP}, \cref{SQP1}, and \cref{SQP2}, respectively. These ``ERSDRs'' are equivalent. 
	\begin{figure}[htbp]
		\centering
		\begin{tikzpicture}[line cap=round,line join=round,>=triangle 45,x=1cm,y=1cm]
		\draw (-1.76,3.68) node[anchor=north west] {\parbox{1.96 cm}{\cref{ERSDR1}     \\ \\  \cref{ERSDR2}  \\    \\  \cref{ERSDR3}}};
		\draw (-1,3.28) node[anchor=north west] {$\Updownarrow$};
		\draw (-1,2.43) node[anchor=north west] {$\Updownarrow$};
		\draw (0.1,3.81) node[anchor=north west] {$\xlongleftarrow[]{\text{SDR}}$};
		\draw (0.1,2.96) node[anchor=north west] {$\xlongleftarrow[]{\text{SDR}}$};
		\draw (0.1,2.11) node[anchor=north west] {$\xlongleftarrow[]{\text{SDR}}$};
		\draw (1.3,4.5) node[anchor=north west] {\parbox{1.8 cm}{\cref{P}  \\  \\  \cref{RQP}  \\   \\  \cref{SQP1}  \\ \\  \cref{SQP2}  \\ \\  \cref{RSQP}}};
		\draw (1.42,4.13) node[anchor=north west] {$\Updownarrow$};
		\draw (1.42,3.28) node[anchor=north west] {$\Updownarrow$};
		\draw (1.42,2.43) node[anchor=north west] {$\Updownarrow$};
		\draw (1.42,1.58) node[anchor=north west] {$\Updownarrow$};
		\end{tikzpicture}
		\centering
		\caption{Relations between the series of ``{\rm ERSDR}s'' and other formulations.}
		\label{fig:4}
	\end{figure}
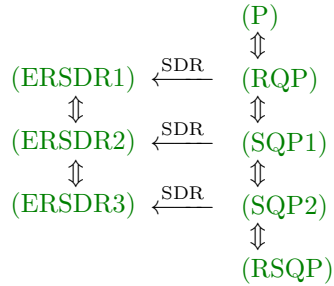

	To conclude this section, we summarize the scale of the above problems in terms of the number of variables and the number of different types of constraints in \cref{tab-1}. In \cref{tab-1}, `$ = $' means equality constraints, `$ \succeq $' means positive semidefinite constraints, and `$ \geqslant $' means lower bound constraints.
	It can be seen from \cref{tab-1} that problem \cref{RSQP} only involves one vector variable $ t\in \mathbb{R}^{nM}_+ $ and $ n $ linear equality constraints, both of which are significantly smaller than those of other relaxation problems. In addition, all constraints in problem \cref{RSQP} are linear. In sharp contrast, all SDR problems contain a positive semidefinite constraint. Our proposed relaxation problem \cref{RSQP} enables us to develop fast algorithms for solving the large-scale MIMO detection problem. Indeed, our proposed PN-QP method for solving the MIMO detection problem is customized based on problem \cref{RSQP}, and as it will be shown in \cref{sec-numerical}, it is much more efficient compared to state-of-the-art ERSDR based approaches.
	\begin{table}[h]
		{\footnotesize
			\caption{Comparison of different relaxations.}\label{tab-1}
			\begin{center}
				\begin{tabular}{|c|c|c|c|c|c|}
					\hline 
					\multirow{2}*{Problem} & \multicolumn{2}{c|}{Number of variables} & \multicolumn{3}{c|}{Number of constraints}  \\ 
					\cline{2-6}
					~ & vector & matrix (size) & $ = $ & $ \succeq $ (size) & $ \geqslant $ \\ 
					\hline 
					\cref{ERSDR1} & $ 2n+nM $ & $ 1(2n\times 2n) $ & $ 6n $ & $ 1(4n\times 4n) $ & $ nM $ \\ 
					\hline
					\cref{ERSDR2} & $ nM $ & $ 1(nM\times nM) $ & $ n+nM(M+1)/2 $ & $ 1(nM\times nM) $ & $ nM $ \\ 
					\hline
					\cref{ERSDR3} & $ nM $ & $ 1(nM\times nM) $ & $ n+nM(M+1)/2 $ & $ 1(nM\times nM) $ & $ nM $ \\ 
					\hline
					\cref{RSQP} & $ nM $ & $ 0 $ & $ n $ & $ 0 $ & $ nM $ \\ 
					\hline 
				\end{tabular}
		\end{center}}
	\end{table}

	\section{Numerical Algorithm for Problem \cref{RSQP}}\label{sec-alg}
	In this section, we present the numerical algorithm for solving problem \cref{RSQP} and discuss its convergence result.
	
	\subsection{Quadratic Penalty Method}
	Recall that problem \cref{RSQP} is a nonlinear programming problem with $n$ simplex constraints. Hence, one can use a solver for constrained optimization problems like \texttt{fmincon} in MATLAB to solve it. However, due to the special property as stated in \cref{rem-rounding}, once the support set of the global minimizer of problem \cref{RSQP} is correctly identified, we can apply the rounding procedure in \cref{alg-0} to obtain a global minimizer of problem \cref{SQP2}. Based on such observations, instead of directly solving problem \cref{RSQP} by treating it as a general constrained optimization problem, we prefer to design an algorithm to (quickly) identify the support set of the global minimizer of problem \cref{RSQP}. Due to this, such an algorithm does not need to strictly satisfy the equality constraints during the algorithmic procedure, i.e., it is reasonable to allow the violations of the equality constraints to some extent. Therefore, we choose the quadratic penalty method to solve problem \cref{RSQP}. 
	More precisely, at each iteration $k$, the quadratic penalty method solves the following subproblem:
	\[
	\begin{aligned}
	& \underset{t\in {\mathbb{R}}^{nM}}{\min}
	& & f_{\omega_k}(t)\triangleq f(t)+\frac{\omega_k}2 \sum_{j = 1}^n(\boldsymbol{e}^\top\bar t_j-1)^2 \\
	& \ \,\text{s.t.}
	& & t\geqslant 0,
	\end{aligned}
	\]
	where $\omega_k>0$ is the penalty parameter. The above subproblem is in general unbounded when $ t\to +\infty $. Therefore, we solve the following subproblem instead
\begin{equation} \label{penaltymethod}
\begin{aligned}
& \underset{t\in {\mathbf{B}}}{\min}
& & f_{\omega_k}(t) 
\end{aligned}
\end{equation}
where $\mathbf{B}\triangleq \{t\in\mathbb{R}^{nM}\mid 0\leqslant t_j\leqslant K,\ j = 1,\ldots, nM\}$, with $ K\geqslant1 $ being a sufficiently large number to guarantee the boundedness of the feasible region. The above problem \cref{penaltymethod} is the penalized subproblem of the following problem
\begin{equation*}\label{ftK}
\begin{aligned}
& \underset{t\in {\mathbb{R}}^{nM}}{\min}
& & f(t) \\
& \ \,\text{s.t.}
& & \boldsymbol{e}^\top\bar t_j=1,\ j=1, \ldots,n, \\
& &&  t\in{\mathbf{B}},
\end{aligned}
\end{equation*}
which is equivalent to problem \cref{RSQP}.
Next, we provide more details on the stopping criteria of the quadratic penalty algorithm and the algorithm for solving the subproblem \cref{penaltymethod}. 
	
	Let $t^k$ be an approximate solution of subproblem \cref{penaltymethod}. 
	As for the stopping criteria, we check whether the support set of $t^k$ is the same as that of the previous step and whether the size of the support set of $\bar t^k_j$ is equal to one for all $j=1,\ldots,n$, i.e.,
	\[\mathcal{K}(t^k)=\mathcal{K}(t^{k-1})\text{ and }\|\bar t^k_j\|_0=1,\ j=1,\ldots,n,\]
	where $ \mathcal{K}(t) $ is the support set of $ t\in\mathbb{R}^{nM} $ defined as
	$\mathcal{K}(t)=\{\ell \mid t_\ell>0,\ \ell=1,\ldots,nM\}.$
	If the above conditions are satisfied, it implies that we reach a feasible point of problem \cref{SQP2} with sparsity $n$, we terminate the iteration. 
	From the numerical point of view, the condition $ \|\bar t^k_j\|_0=1 $ is implemented by
	\begin{equation}\label{eq-stopcriteria-eps}
	\|\bar t^k_j\|_{0,\,\varepsilon}=1,\ j=1,\ldots,n,
	\end{equation}
	where $ \|\bar t^k_j\|_{0,\,\varepsilon} $ denotes the number of elements which are significantly larger than zero, that is, $ \left(\bar t^k_j \right)_l>\varepsilon $, and $ 0<\varepsilon<1 $ is a prescribed small number.

	As for subproblem \cref{penaltymethod}, one equivalent characterization of the stationary point is 
 	\[t^k-\Pi_{\mathbf{B}}(t^k-\nabla f_{\omega_k}(t^k))=0,\]
	where $\Pi_{\mathbf{B}}(t)$ denotes the projection of $t\in\mathbb{R}^{nM}$ onto the set $\mathbf{B}$.
	Here we solve subproblem \cref{penaltymethod} inexactly to get a solution $ t^k $, that is,  $t^k$ satisfies
 	\begin{equation}\label{eq-alg-con2}
\|t^k-\Pi_{\mathbf{B}}(t^k-\nabla f_{\omega_k}(t^k))\|_2\leqslant\tau_k, 	\end{equation}
 	where $ \tau_k\downarrow 0 $.

    Note that subproblem \cref{penaltymethod} is a non-convex quadratic programming problem with simple lower and upper bound constraints. 
    As mentioned above, we prefer to identify the support set of the global minimizer of subproblem \cref{penaltymethod} rather than find the global minimizer itself (in order to reduce the computational cost). The strategy of identifying the active set is therefore crucial in solving subproblem \cref{penaltymethod}. From this point of view, the active set methods are particularly suitable to solve subproblem \cref{penaltymethod}. Therefore, we choose the typical active set method, the projected Newton method proposed in \cite{bertsekas1982projected}, which is demonstrated to be highly efficient in solving large-scale problems such as calibrating least squares covariance matrices \cite{li2011a}.
    
    Overall, we give the details of the PN-QP method in \cref{alg-qp}.
    \begin{algorithm}
    	\caption{PN-QP Method.}
    	\label{alg-qp}
    	\begin{algorithmic}[1]
    		\STATE{Initialization: $ t^0\in\mathbb{R}^{nM} $, $ k:=1 $, $ \rho>1 $, $ \tau_k\downarrow 0 $, $ \varepsilon>0 $, and $ \omega_k>0 $;}	
    		\WHILE{$ k\leqslant{\rm maxiter} $}
    		\STATE{Solve subproblem \cref{penaltymethod} by the projected Newton method to get $ t^k $ such that $ t^k $ satisfies \cref{eq-alg-con2};}
    		\IF{conditions $ \mathcal{K}(t^k)=\mathcal{K}(t^{k-1}) $ and \cref{eq-stopcriteria-eps} are satisfied}
    		\STATE{Break;}
    		\ENDIF
    		\STATE{Set $ \omega_{k+1}:=\rho\omega_k $, $ k:=k+1 $;}
    		\ENDWHILE
    		\RETURN $t^k$.
    	\end{algorithmic}
    \end{algorithm}
 \begin{remark}\label{remark-new2}  Here we would like to highlight that due to the special strategy in identifying the active set for lower and upper constraints, the projected Newton method \cite{bertsekas1982projected} is guaranteed to identify the active set of the stationary point of the problem in the form of subproblem \cref{penaltymethod} \cite[Proposition 2]{bertsekas1982projected}. Moreover, since the second-order information is employed in the projected Newton method, under reasonable assumptions, it is able to converge to a local optimal solution of subproblem \cref{penaltymethod} \cite[Propositions 3, 4]{bertsekas1982projected}.
    \end{remark}

	We have the following classic convergence result of the quadratic penalty method \cite[Chapter 17]{nocedal2006numerical}. Due to the limitation on the length of the paper, we omit the proof here.
	\begin{theorem}\label{thm-PM}
		Suppose that in {\rm \cref{alg-qp}}, the sequence $\{ t^k\} $ satisfies \cref{eq-alg-con2}, $ \tau_k\downarrow 0 $, and $ \omega_k\uparrow +\infty $. Then any accumulation point of the sequence generated by {\rm \cref{alg-qp}} is a stationary point of problem \cref{RSQP}.
	\end{theorem}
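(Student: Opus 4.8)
To establish \cref{thm-PM}, the plan is to adapt the classical convergence argument for the quadratic penalty method to the present setting, where each subproblem \cref{penaltymethod} carries the extra box constraint $t\in\mathbf B$. Recall that \cref{alg-qp} applies the penalty method to the bound-constrained reformulation
\[
\min\{\,f(t)\mid \boldsymbol e^\top\bar t_j=1,\ j=1,\ldots,n,\ t\in\mathbf B\,\},
\]
which is equivalent to problem \cref{RSQP} because $\boldsymbol e^\top\bar t_j=1$ together with $\bar t_j\ge0$ already forces $(\bar t_j)_\ell\le1<K$. For each iterate I set $g^k:=\nabla f_{\omega_k}(t^k)$; by the block structure of the penalty term, the $j$-th block of $g^k$ is $\nabla_{\bar t_j}f(t^k)+\lambda^k_j\boldsymbol e$ with $\lambda^k_j:=\omega_k(\boldsymbol e^\top\bar t^k_j-1)$, and I split $g^k=\mu^k-\sigma^k$ coordinatewise with $\mu^k:=\max(g^k,0)\ge0$ and $\sigma^k:=\max(-g^k,0)\ge0$, so that $\mu^k_\ell\sigma^k_\ell=0$ for every coordinate $\ell$. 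Here $\lambda^k$ plays the role of the multiplier for the equality constraints, while the blocks $\bar\mu^k_j,\bar\sigma^k_j$ of $\mu^k,\sigma^k$ play the role of the multipliers for the bounds.

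First I would record what the inexact stationarity condition \cref{eq-alg-con2} says coordinatewise. Since $\mathbf B=[0,K]^{nM}$ and $\Pi_{\mathbf B}$ acts coordinatewise, the bound $\|t^k-\Pi_{\mathbf B}(t^k-g^k)\|_2\le\tau_k$ yields the approximate complementarity relations
\[
\mu^k_\ell\le\tau_k\ \text{ whenever }\ t^k_\ell>\tau_k,\qquad\quad \sigma^k_\ell\le\tau_k\ \text{ whenever }\ t^k_\ell<K-\tau_k.
\]
Since $\mathbf B$ is compact, the sequence $\{t^k\}$ has accumulation points; fix one, say $t^*$, with $t^{k_i}\to t^*\in\mathbf B$.

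The crucial step — and the one I expect to be the main obstacle, precisely because $\omega_k\uparrow+\infty$ — is to show that the scaled multipliers $\{\lambda^k\}$ remain bounded along $\{k_i\}$; the point is that the equality constraints act on disjoint blocks of coordinates. Restricting the identity $g^k=\mu^k-\sigma^k$ to block $j$ gives $\nabla_{\bar t_j}f(t^k)+\lambda^k_j\boldsymbol e=\bar\mu^k_j-\bar\sigma^k_j$ in $\mathbb{R}^M$. Because $f$ is quadratic and $\{t^k\}\subset\mathbf B$ is bounded, $\nabla_{\bar t_j}f(t^k)$ stays bounded; hence if $\lambda^k_j\to+\infty$ along a subsequence, then every entry of $\bar\mu^k_j-\bar\sigma^k_j$ tends to $+\infty$, and, since $\bar\mu^k_j,\bar\sigma^k_j\ge0$ with entrywise product zero, every entry of $\bar\mu^k_j$ tends to $+\infty$. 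By the approximate complementarity this forces every entry of $\bar t^k_j$ to be at most $\tau_k\to0$, so $\boldsymbol e^\top\bar t^k_j\to0$ and therefore $\lambda^k_j=\omega_k(\boldsymbol e^\top\bar t^k_j-1)\to-\infty$, a contradiction. A symmetric argument rules out $\lambda^k_j\to-\infty$: it would drive every entry of $\bar t^k_j$ up to $K$, so $\boldsymbol e^\top\bar t^k_j\to MK\ge2>1$ and $\lambda^k_j\to+\infty$. Hence each $\{\lambda^k_j\}$, and thus $\{\lambda^k\}$, is bounded along $\{k_i\}$, and then the same block relation shows $\{\mu^k\}$ and $\{\sigma^k\}$ are bounded too.

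Finally I would pass to the limit. Boundedness of $\{\lambda^k\}$ and $\omega_k\uparrow+\infty$ give $\boldsymbol e^\top\bar t^k_j-1=\lambda^k_j/\omega_k\to0$, so $t^*$ satisfies $\boldsymbol e^\top\bar t^*_j=1$ for all $j$ and $t^*\ge0$, i.e., $t^*$ is feasible for problem \cref{RSQP}. Extracting a further subsequence along which $\lambda^k\to\lambda^*$, $\mu^k\to\mu^*\ge0$ and $\sigma^k\to\sigma^*\ge0$, and letting $k\to\infty$ in the block relation (using continuity of $\nabla f$), I obtain $\nabla_{\bar t_j}f(t^*)+\lambda^*_j\boldsymbol e=\bar\mu^*_j-\bar\sigma^*_j$ for every $j$. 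The approximate complementarity passes to the limit as well: if $t^*_\ell>0$ then $t^k_\ell>\tau_k$ for all large $k$, whence $\mu^*_\ell=0$ and so $\mu^*_\ell t^*_\ell=0$; moreover, since the upper bound $t\le K\boldsymbol e$ is inactive at every feasible point of problem \cref{RSQP} (as $(\bar t^*_j)_\ell\le1<K$), we get $\sigma^*=0$. Collecting these relations, $(t^*,\lambda^*,\mu^*)$ satisfies the KKT system of problem \cref{RSQP}, so $t^*$ is a stationary point of problem \cref{RSQP}, which is the assertion. (Alternatively, one may simply invoke Theorem~17.2 of \cite{nocedal2006numerical}; the only extra ingredient is the treatment of the box $\mathbf B$ described above, which is also what supplies the constraint qualification implicit in that classical argument.)
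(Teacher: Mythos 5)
Your proof is correct. Note that the paper itself gives no argument here: it simply invokes the ``classic convergence result of the quadratic penalty method'' from \cite[Chapter 17]{nocedal2006numerical} and omits the proof. Your write-up actually supplies something the bare citation does not: the classical theorem is stated for purely equality-constrained problems with an unconstrained penalty subproblem and needs a constraint-qualification/multiplier-extraction step at the limit point, whereas subproblem \cref{penaltymethod} keeps the bounds $t\in\mathbf B$ as hard constraints. Your coordinatewise reading of \cref{eq-alg-con2} (approximate complementarity for the box), and especially the block-by-block contradiction argument showing that the implicit multipliers $\lambda^k_j=\omega_k(\boldsymbol e^\top\bar t^k_j-1)$ stay bounded even though $\omega_k\uparrow+\infty$, is exactly the extra ingredient needed to make the classical argument go through in this setting; the limiting KKT system you obtain matches the characterization of stationarity in \cref{prop-1}. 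The only point worth tightening is your use of $(\bar t^*_j)_\ell\leqslant 1<K$ to kill the upper-bound multipliers $\sigma^*$: the paper only asserts $K\geqslant 1$, so you should state explicitly that you take $K>1$ (which is clearly the intended reading of ``sufficiently large''), since for $K=1$ the upper bound could be active at a limit point with $(\bar t^*_j)_\ell=1$ and the conclusion $\sigma^*=0$ would need a separate justification.
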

	For \cref{alg-qp}, it is possible that the sparsity of the resulting stationary point of $\{t^k\}$ may be greater than $n$. Below we design a special rounding algorithm, which is guaranteed to return  a feasible point of problem \cref{SQP2} with sparsity $n$.
	
	\subsection{Rounding Algorithm} 
	To present the rounding algorithm, we need the following equivalent characterization of stationary points \cite[Lemma 1 (i)]{cui2018a}.  For $ \nu\in\mathbb{R}^{M} $, let	$ \Gamma(\nu)=\{l\mid \nu_l>0\} $ and $ \mathcal{I}(\nu)=\{l\mid \nu_l=0\} $.  	
		\begin{proposition}\label{prop-1} A vector $ t\in\mathbb{R}^{nM} $ is a stationary point of problem \cref{RSQP} if and only if the following conditions hold at $ t $ for $ j=1,\ldots,\,n $,
	\begin{eqnarray}\label{KKT-3}
(\nabla_{\bar t_j}f(t))_s&=&(\nabla_{\bar t_j}f(t))_k,\ \forall\ s,\,k\in \Gamma(\bar t_j),\\\label{KKT-4}
(\nabla_{\bar t_j}f(t))_l&\geqslant& (\nabla_{\bar t_j}f(t))_s,\ \forall\ l\in \mathcal{I}(\bar t_j),\ \forall\ s\in \Gamma(\bar t_j).
\end{eqnarray}
  
\end{proposition}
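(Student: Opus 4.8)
The plan is to identify stationarity of problem \cref{RSQP} with the projected-gradient fixed-point condition over its feasible set, exploit the Cartesian-product structure of that set to decouple the condition block by block, and then use the blockwise linearity of $f$ from \cref{prop-linear} to reduce each block condition to the optimality condition of a linear program over a simplex.

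First I would record that $t$ is a stationary point of \cref{RSQP} exactly when $t$ is feasible and $t=\Pi_{\Delta}\!\left(t-\nabla f(t)\right)$, where $\Delta\triangleq\{t\in\mathbb{R}^{nM}\mid \boldsymbol{e}^\top\bar t_j=1,\ \bar t_j\geqslant0,\ j=1,\ldots,n\}=\Omega^{\,n}$ is the product of $n$ copies of the unit simplex $\Omega=\{\nu\in\mathbb{R}^M\mid \boldsymbol{e}^\top\nu=1,\ \nu\geqslant0\}$ (equivalently, $\langle\nabla f(t),\,s-t\rangle\geqslant0$ for every $s\in\Delta$). Because $\Delta$ is a product and $\nabla f(t)$ splits as $(\nabla_{\bar t_1}f(t),\ldots,\nabla_{\bar t_n}f(t))$, the projection onto $\Delta$ is the tuple of projections onto $\Omega$, so the fixed-point identity is equivalent to the $n$ separate conditions $\bar t_j=\Pi_{\Omega}\!\left(\bar t_j-\nabla_{\bar t_j}f(t)\right)$, $j=1,\ldots,n$. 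By \cref{prop-linear} the vector $g^{(j)}\triangleq\nabla_{\bar t_j}f(t)$ does not depend on $\bar t_j$, so — using the decomposition \cref{eq-f} — this last condition says precisely that $\bar t_j$ solves the linear program $\min\{\,(g^{(j)})^\top\nu\mid \nu\in\Omega\,\}$.

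Next I would prove the elementary lemma: for $g\in\mathbb{R}^M$ and $\nu\in\Omega$, the point $\nu$ minimizes $g^\top\mu$ over $\mu\in\Omega$ if and only if $g_s=g_k$ for all $s,k\in\Gamma(\nu)$ and $g_l\geqslant g_s$ for all $l\in\mathcal{I}(\nu)$ and $s\in\Gamma(\nu)$ — that is, $g$ is constant on $\Gamma(\nu)$ and that constant equals $\min_{\ell}g_\ell$. For the ``only if'' direction, write the KKT conditions of this (convex) LP with multiplier $\lambda\in\mathbb{R}$ for $\boldsymbol{e}^\top\nu=1$ and $\mu\in\mathbb{R}^M_{+}$ for $\nu\geqslant0$: then $g=\lambda\boldsymbol{e}+\mu$ with $\mu_\ell\nu_\ell=0$, which forces $g_\ell=\lambda$ on $\Gamma(\nu)$ and $g_\ell=\lambda+\mu_\ell\geqslant\lambda$ on $\mathcal{I}(\nu)$ (here $\Gamma(\nu)\neq\emptyset$ since $\boldsymbol{e}^\top\nu=1$). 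For the ``if'' direction, take $\lambda$ to be the common value of $g$ on $\Gamma(\nu)$ and $\mu_\ell\triangleq g_\ell-\lambda\geqslant0$, which vanishes on $\Gamma(\nu)$; these multipliers certify optimality of $\nu$. (Equivalently, the lemma is the standard KKT characterization of the Euclidean projection $\nu=\Pi_\Omega(\nu-g)$.) Applying the lemma with $g=g^{(j)}$ and $\nu=\bar t_j$ turns the $j$-th block condition into exactly \cref{KKT-3}--\cref{KKT-4}; doing this for every $j$ gives both implications, which is \cite[Lemma 1 (i)]{cui2018a} specialized to \cref{RSQP}.

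The only step needing a little care is the decoupling in the second paragraph: to see that a feasible $t$ violating some block condition is not stationary, one must exhibit a feasible first-order descent direction for \cref{RSQP}, and this is provided by replacing $\bar t_j$ with a strictly better point of $\Omega$ while freezing $t_{-j}$ — feasibility is preserved since $\Delta=\Omega^{\,n}$, and $f$ decreases to first order by the blockwise linearity \cref{prop-linear}, \cref{eq-f}. Everything else is the routine first-order theory of linearly constrained problems, so I expect this decoupling (really, the use of \cref{prop-linear}) to be the only substantive point.
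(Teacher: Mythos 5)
Your proof is correct. Note that the paper does not actually prove this proposition: it is quoted directly from \cite[Lemma 1 (i)]{cui2018a}, so there is no in-paper argument to compare against. What you supply is a valid self-contained derivation of that cited result specialized to \cref{RSQP}: you identify stationarity with the variational inequality $\langle\nabla f(t),\,s-t\rangle\geqslant 0$ over the product of simplices, use the Cartesian-product structure to decouple it into $n$ block conditions (this step needs only the product structure of the feasible set, not \cref{prop-linear}; the blockwise linearity is what makes the linearized block problem coincide with the exact one, which is a bonus rather than a necessity for \cref{KKT-3}--\cref{KKT-4}), and then invoke the standard KKT characterization of a linear program over the unit simplex, which yields precisely that $\nabla_{\bar t_j}f(t)$ is constant on $\Gamma(\bar t_j)$ and attains its minimum there. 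All three steps are sound, and the sign bookkeeping in your multiplier argument is right, so the proposal stands as a complete replacement for the citation.
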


	The following rounding algorithm for computing a stationary point of problem \cref{RSQP} is slightly different from \cref{alg-0}.  In particular, instead of picking any index of the nonzero entries, we shall pick an index whose corresponding gradient is the smallest, which will help in obtaining a smaller function value. 

\begin{algorithm}
	\caption{Rounding Algorithm}
	\label{alg-round-j}
	\begin{algorithmic}[1]
		\STATE{Initialization: a stationary point $ t^0 $ of problem \cref{RSQP}, $ j=1 $;}
		\WHILE{$ j\leqslant n $}
		\STATE{Let $ s_j\in\arg \underset{l\in\{1,\ldots,\,M\}}{\min}(\nabla_{\bar t_j}f(t^{j-1}))_l$;}
		\STATE{Define $ \nu\in\mathbb{R}^{M} $ by
		$\nu_{s_j}=1,\ \nu_l=0, \ l\neq s_j,\ l\in\{1,\ldots,\,M\}$;}
		\STATE{Define $ t^{j}\in\mathbb{R}^{nM} $ by
			\begin{equation}\label{6}\bar t^{j}_j=\nu,\ \bar t^j_k=\bar t^{j-1}_k,\ k\neq j,\ k\in\{1,\ldots,\,n\}\backslash \{j\};\end{equation}}
		\STATE{Let $ j:=j+1 $;}
		\ENDWHILE
		\RETURN $t^n$.
	\end{algorithmic}
\end{algorithm}
We have the following properties about \cref{alg-round-j}.
\begin{proposition}\label{prop-3}
	Let $ t^0 $ be a stationary point of problem \cref{RSQP}.  Running {\rm \cref{alg-round-j}} with input $t^0$, we have $\|t^n\|_0=n$ and
	\begin{equation}\label{eq-flag}
	f(t^n)\leqslant f(t^0).
	\end{equation}
\end{proposition}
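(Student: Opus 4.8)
The plan is to prove the two assertions in turn, using \cref{prop-linear} as the essential ingredient for the inequality \cref{eq-flag} and simple bookkeeping for the sparsity count.

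First, for $\|t^n\|_0 = n$: at iteration $j$ of \cref{alg-round-j} only block $j$ of the running iterate is overwritten, and it is replaced by the vector $\nu$ that has a single $1$ in position $s_j$ and zeros elsewhere, so $\|\bar t^j_j\|_0 = 1$. Since iterations $j+1,\dots,n$ modify only blocks $j+1,\dots,n$, block $j$ of the output $t^n$ coincides with this $\nu$; hence $\|\bar t^n_j\|_0 = 1$ for every $j$ and $\|t^n\|_0 = \sum_{j=1}^n \|\bar t^n_j\|_0 = n$. The same observation shows that every iterate $t^{j-1}$ is feasible for problem \cref{RSQP}: its blocks $1,\dots,j-1$ are unit vectors, hence lie in the simplex $\Omega = \{\nu\in\mathbb{R}^M\mid \boldsymbol{e}^\top\nu = 1,\ \nu\geqslant 0\}$, while its blocks $j,\dots,n$ still equal those of the feasible input $t^0$; in particular $\bar t^{j-1}_j \in \Omega$.

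Second, for \cref{eq-flag}, I would prove the monotonicity $f(t^j)\leqslant f(t^{j-1})$ for $j=1,\dots,n$ and then telescope. Since $t^j$ and $t^{j-1}$ differ only in block $j$, we have $t^j_{-j} = t^{j-1}_{-j}$. By \cref{prop-linear} together with the decomposition \cref{eq-f}, the gradient $\nabla_{\bar t_j} f$ and the term $f^{-j}$ depend only on $t_{-j}$, so, writing $g \triangleq \nabla_{\bar t_j} f(t^{j-1}) = \nabla_{\bar t_j} f(t^j)$ and substituting into \cref{eq-f},
\[
f(t^j) - f(t^{j-1}) = g^\top \bar t^j_j - g^\top \bar t^{j-1}_j = g^\top \nu - g^\top \bar t^{j-1}_j .
\]
Because $s_j \in \arg\min_{l} g_l$, the vertex $\nu$ minimizes the linear functional $\nu'\mapsto g^\top\nu'$ over $\Omega$ (a linear program over a simplex attains its minimum at a vertex, namely one with smallest objective coefficient), and since $\bar t^{j-1}_j\in\Omega$ we conclude $g^\top\nu\leqslant g^\top\bar t^{j-1}_j$, i.e.\ $f(t^j)\leqslant f(t^{j-1})$. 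Chaining from $j=1$ to $n$ yields $f(t^n)\leqslant f(t^0)$.

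I do not expect a genuine obstacle; the only points requiring care are (i) checking that each iterate $t^{j-1}$ remains feasible, so that $\bar t^{j-1}_j$ really lies in $\Omega$ when vertex-optimality of the linear program is invoked, and (ii) the indexing that block $j$ is frozen after iteration $j$. Note that the hypothesis that $t^0$ is a stationary point is not actually used in \cref{prop-3}\,---\,only feasibility of $t^0$ and the linearity property \cref{prop-linear} enter\,---\,although stationarity is the natural hypothesis for the companion fact that $t^n$ is again a stationary point (which would invoke \cref{prop-1}).
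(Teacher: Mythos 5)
Your proof is correct and follows essentially the same route as the paper's (very terse) argument: block-linearity of $f$ from \cref{prop-linear} plus the fact that the greedy choice $s_j\in\arg\min_l(\nabla_{\bar t_j}f(t^{j-1}))_l$ minimizes the resulting linear functional over the simplex, telescoped over $j$. Your side observation is also accurate: the paper's proof additionally cites \cref{prop-1}, but as your argument shows, only feasibility of $t^0$ (so that each $\bar t^{j-1}_j\in\Omega$) is actually needed for \cref{eq-flag}, not stationarity.
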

\begin{proof}The sparsity of $t^n$ can be obtained directly by the process of \cref{alg-round-j}. For \cref{eq-flag}, by the definition of $ t^j $ in \cref{6}, \cref{prop-linear}, as well as \cref{prop-1}, one can obtain that $f(t^j)$ does not exceed $f(t^{j-1})$, $j = 1,\dots, n-1$, giving \cref{eq-flag}. \end{proof}

\begin{remark}
\cref{prop-3} reveals that for a stationary point $t^0$ returned by \cref{alg-qp}, the rounding procedure \cref{alg-round-j} will return a feasible point, whose sparsity is $n$ and whose   function value does not exceed $f(t^0)$.
\end{remark}

	\subsection{Exact Detection of PN-QP}
	To further discuss under which condition the PN-QP method has an exact detection guarantee, i.e., the PN-QP method is guaranteed to return the optimal solution $t^*$ corresponding to the vector of transmitted
signals $x^*$, let us denote
\begin{equation}\label{eq-barQ}
\mathring{Q}=H^\dagger H-\Diag(\diag(H^\dagger H))\text{ and }\overline Q = H^\dagger H - \frac12\Diag(\diag(H^\dagger H)).
 \end{equation}
 We need the following two lemmas whose proofs are elementary and therefore were provided in a separate technical report\footnote{http://lsec.cc.ac.cn/$ \sim $yafliu/technical\_report\_MIMO.pdf}.
\begin{lemma}\label{lem-1}
	Let $ x^* $ be the vector of transmitted signals satisfying \cref{MIMO} and $t^*\in\mathbb{R}^{nM}$ be the corresponding vector by \cref{XT1}. Let $ {x} $ be any feasible point of problem \cref{P} with $ {x}\neq x^* $ and (similarly) $ t\in\mathbb{R}^{nM} $ be the corresponding vector, that is,
	\[\begin{bmatrix}
	\RE( {x})\\
	\IM( {x})
	\end{bmatrix}=Pt=\begin{bmatrix}
	At\\
	Bt
	\end{bmatrix},\]
	where $ A $, $ B $, and $ P $ are defined in \cref{AB}.  Furthermore, assume that the phases for $  {x}_j $ and $ x^*_j $ are $ \theta_j $ and $ \theta^*_j $, respectively, $j = 1,\ldots, n$. We have the following results:
	\begin{eqnarray}\label{r2-7}
	&&\|x^*-  x\|_2^2 = 4\sum_{j = 1}^n\sin^2\left(\frac{\theta^*_j-\theta_j}{2}\right), \\
	\label{r2-0}
	&&\|x^*-  x\|_1 = 2\sum_{j = 1}^n\sin\left(\left|\frac{\theta^*_j-\theta_j}{2}\right| \right), \\\label{r2-2}
	&&(t^*-t)^\top \widetilde{G}(t^*-t)=\langle x^*- {x},\,\mathring{Q}(x^*- {x})\rangle, \\\label{r2-3}
	&&(t^*-t)^\top (Gt^*+w)=\langle-H^\dagger v,\,x^*- {x}\rangle, \\\label{r2-20}
	&&2(t^*)^\top\widetilde{D}(t^*-t)=\langle x^*- {x},\,\Diag(\diag(Q)) (x^*-  x)\rangle. 
	\end{eqnarray}
\end{lemma}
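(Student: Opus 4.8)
My plan is to treat the two trigonometric identities \cref{r2-7} and \cref{r2-0} directly, and to exploit a ``real-form dictionary'' between the complex data $(Q,c,H,v)$ and the real reformulation for the remaining three. For \cref{r2-7}: since $x^*_j=\exp(\mathrm i\theta^*_j)$ and $x_j=\exp(\mathrm i\theta_j)$ both lie on the unit circle, $|x^*_j-x_j|^2 = 2-2\cos(\theta^*_j-\theta_j)$, and the half-angle identity $1-\cos\phi = 2\sin^2(\phi/2)$ rewrites this as $4\sin^2((\theta^*_j-\theta_j)/2)$; summing over $j$ gives \cref{r2-7}. Taking square roots yields $|x^*_j-x_j| = 2\,|\sin((\theta^*_j-\theta_j)/2)|$, and since $\theta^*_j,\theta_j\in\mathcal A$ forces $(\theta^*_j-\theta_j)/2\in(-\pi,\pi)$, on which $|\sin\phi|=\sin|\phi|$, summing gives \cref{r2-0}.

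For the other three I would first record two elementary facts. (i) By \cref{XT1} and the analogous relation for $t$ in the hypothesis, $P(t^*-t)$ equals the real form $(\RE(x^*-x)^\top,\IM(x^*-x)^\top)^\top$ of $x^*-x$; write $y^*=Pt^*$ and $y=Pt$. (ii) For any $z\in\mathbb C^n$ with real form $\hat z=(\RE(z)^\top,\IM(z)^\top)^\top$, one has that $\widehat Q\hat z$ is the real form of $Qz$ and that $\hat u^\top\hat w=\RE(u^\dagger w)=\langle u,w\rangle$ for all $u,w\in\mathbb C^n$; both are immediate on writing out real and imaginary parts. Using $G=P^\top\widehat QP$ and $w=P^\top\hat c$ together with (i)--(ii) gives $(t^*-t)^\top G(t^*-t) = (y^*-y)^\top\widehat Q(y^*-y) = \RE\big((x^*-x)^\dagger Q(x^*-x)\big) = \langle x^*-x,\,Q(x^*-x)\rangle$, the last equality because $Q=H^\dagger H$ is Hermitian. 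Similarly $(t^*-t)^\top(Gt^*+w) = \big(P(t^*-t)\big)^\top(\widehat Qy^*+\hat c)$, and $\widehat Qy^*+\hat c$ is the real form of $Qx^*+c = H^\dagger(Hx^*-r) = -H^\dagger v$, using $r=Hx^*+v$; hence $(t^*-t)^\top(Gt^*+w) = \langle -H^\dagger v,\,x^*-x\rangle$, which is \cref{r2-3}.

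To close \cref{r2-2} and to prove \cref{r2-20}, I would compute the diagonal blocks of $G$. Expanding $G = A^\top\RE(Q)A - A^\top\IM(Q)B + B^\top\IM(Q)A + B^\top\RE(Q)B$ and unwinding $A^\top = I\otimes\alpha$, $B^\top = I\otimes\beta$, its $(j,k)$ block is $S_{jk} = [\RE(Q)]_{jk}(\alpha\alpha^\top+\beta\beta^\top) + [\IM(Q)]_{jk}(\beta\alpha^\top-\alpha\beta^\top)$. Since $\cos\theta_p\cos\theta_q+\sin\theta_p\sin\theta_q = \cos(\theta_p-\theta_q)$ and $[\IM(Q)]_{jj}=0$ (as $Q$ is Hermitian), this gives $(S_{jj})_{pq} = Q_{jj}\cos(\theta_p-\theta_q)$. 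Letting $p^*$ and $p$ be the indices with $\theta_{p^*}=\theta^*_j$ and $\theta_p=\theta_j$, so that $\bar t^*_j$ and $\bar t_j$ are the corresponding coordinate vectors of $\mathbb R^M$, I obtain $(\bar t^*_j-\bar t_j)^\top S_{jj}(\bar t^*_j-\bar t_j) = (S_{jj})_{p^*p^*} - 2(S_{jj})_{p^*p} + (S_{jj})_{pp} = 2Q_{jj}\big(1-\cos(\theta^*_j-\theta_j)\big) = Q_{jj}|x^*_j-x_j|^2$ and $2(\bar t^*_j)^\top S_{jj}(\bar t^*_j-\bar t_j) = 2\big[(S_{jj})_{p^*p^*}-(S_{jj})_{p^*p}\big] = Q_{jj}|x^*_j-x_j|^2$. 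Summing over $j$ and recalling $\widetilde D = \Diag(S_{11},\ldots,S_{nn})$, both $(t^*-t)^\top\widetilde D(t^*-t)$ and $2(t^*)^\top\widetilde D(t^*-t)$ equal $\langle x^*-x,\,\Diag(\diag(Q))(x^*-x)\rangle$, which is \cref{r2-20}; and \cref{r2-2} then follows from $\widetilde G = G-\widetilde D$ together with $\mathring Q = Q-\Diag(\diag(Q))$.

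The main obstacle is the block bookkeeping in this last step: one must carefully track how $\RE(Q)$ and $\IM(Q)$ pair with the rank-one blocks $\alpha\alpha^\top$, $\beta\beta^\top$, $\alpha\beta^\top$, $\beta\alpha^\top$ under the Kronecker products defining $A$ and $B$, and verify that the antisymmetric cross-terms vanish on the diagonal because $[\IM(Q)]_{jj}=0$. Every other step is a routine expansion or a direct trigonometric identity.
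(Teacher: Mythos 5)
Your proof is correct: the trigonometric identities, the real-form dictionary $P(t^*-t)\leftrightarrow x^*-x$, $\widehat Q\hat z\leftrightarrow Qz$, $\hat u^\top\hat w=\langle u,w\rangle$, and the block computation $(S_{jj})_{pq}=q_{jj}\cos(\theta_p-\theta_q)$ all check out, and the identity $Qx^*+c=-H^\dagger v$ correctly drives \cref{r2-3}. The paper relegates this proof to an external technical report, but your derivation of $S_{jk}$ coincides with the paper's own \cref{prop-property}(iii) in the appendix, so this is essentially the intended elementary argument (you could simply cite that proposition rather than rederive the blocks).
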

 \begin{lemma}\label{lem-2}
$
{ \left|\langle\xi,\,\xi^*\rangle \right| \leqslant \sqrt 2\|\xi\|_\infty\|\xi^*\|_1 , \ \forall\ \xi, \ \xi^*\in\mathbb{C}^n.}
$
\end{lemma}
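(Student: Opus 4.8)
The plan is to transfer the inequality to real Euclidean space, where it becomes the elementary H\"older bound $|a^\top b|\le\|a\|_\infty\|b\|_1$, and then to pay a single factor $\sqrt2$ when translating the resulting real norms back into the complex ones. Concretely, for $\xi\in\mathbb{C}^n$ I would introduce its real representation $\hat\xi=(\RE(\xi)^\top,\IM(\xi)^\top)^\top\in\mathbb{R}^{2n}$, exactly as in \cref{DINGYI}. From the definition $\langle\xi,\xi^*\rangle=\RE(\xi^\dagger\xi^*)$ and the scalar identity $\RE(\overline{\xi_j}\xi^*_j)=\RE(\xi_j)\RE(\xi^*_j)+\IM(\xi_j)\IM(\xi^*_j)$, summed over $j$, one gets $\langle\xi,\xi^*\rangle=\hat\xi^\top\hat\xi^*$, and hence $|\langle\xi,\xi^*\rangle|\le\|\hat\xi\|_\infty\|\hat\xi^*\|_1$ by H\"older's inequality on $\mathbb{R}^{2n}$.

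It then remains to compare the two real norms with the complex ones. For the $\ell_\infty$ factor, each real and imaginary part of a complex scalar is dominated by its modulus, so $\|\hat\xi\|_\infty=\max_j\max\{|\RE(\xi_j)|,|\IM(\xi_j)|\}\le\max_j|\xi_j|=\|\xi\|_\infty$. For the $\ell_1$ factor, the planar inequality $|a|+|b|\le\sqrt2\sqrt{a^2+b^2}$ gives $\|\hat\xi^*\|_1=\sum_j\left(|\RE(\xi^*_j)|+|\IM(\xi^*_j)|\right)\le\sqrt2\sum_j|\xi^*_j|=\sqrt2\|\xi^*\|_1$. Chaining these three estimates yields $|\langle\xi,\xi^*\rangle|\le\sqrt2\|\xi\|_\infty\|\xi^*\|_1$, which is the claim.

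There is no serious obstacle here: the lemma is just a packaging of H\"older's inequality with two coordinatewise norm comparisons. The only point worth isolating is where the constant $\sqrt2$ is incurred, namely in passing from the complex $\ell_1$-norm of $\xi^*$ to the $\ell_1$-norm of its stacked real and imaginary parts, where $\sqrt2$ is the sharp constant. (One could instead bound $|\RE(\overline{\xi_j}\xi^*_j)|\le|\xi_j||\xi^*_j|$ term by term and obtain the marginally stronger estimate $|\langle\xi,\xi^*\rangle|\le\|\xi\|_\infty\|\xi^*\|_1$ with no $\sqrt2$; I keep the $\sqrt2$-version because that is the form invoked in the subsequent exact-detection analysis.)
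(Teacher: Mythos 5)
Your proof is correct: the identity $\langle\xi,\xi^*\rangle=\hat\xi^\top\hat\xi^*$, the real H\"older bound, and the two norm comparisons $\|\hat\xi\|_\infty\leqslant\|\xi\|_\infty$ and $\|\hat\xi^*\|_1\leqslant\sqrt2\,\|\xi^*\|_1$ are all valid and chain together to give the claim. The paper itself relegates the proof of this lemma to an external technical report, so there is no in-text argument to compare against, but yours is exactly the kind of elementary derivation the authors indicate; your parenthetical observation is also right that the termwise bound $|\RE(\overline{\xi_j}\xi^*_j)|\leqslant|\xi_j||\xi^*_j|$ already yields the stronger constant-free inequality $|\langle\xi,\xi^*\rangle|\leqslant\|\xi\|_\infty\|\xi^*\|_1$, so the stated $\sqrt2$ is not actually needed for the lemma to hold.
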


With the above two lemmas, we have the following result.

\begin{lemma}\label{thm2-10}
	Let $ t,\ t^*,\  {x} $, and $ x^* $ be defined as in {\rm \cref{lem-1}}. 
 If the following condition holds
	\begin{equation}\label{r2-24-two}
	 \sqrt 2\lambda_{\min}(\overline{Q})  \sin \left(\frac{\pi}{M}\right)> \|H^\dagger v\|_\infty,
	\end{equation}
	then there is
	$
	\nabla f(t)^\top(t^*-t)<0.
	$
\end{lemma}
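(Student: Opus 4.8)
The plan is to rewrite $\nabla f(t)^\top(t^*-t)$ entirely in terms of the complex quantities $x^*-x$ and $H^\dagger v$, after which \cref{lem-2} and condition \cref{r2-24-two} will finish the job. Recall from \cref{SQP2} and \cref{tildeG} that $f(t)=t^\top\widetilde G t+2w^\top t$, where $\widetilde G$ is symmetric; hence $\nabla f(t)=2(\widetilde G t+w)$ and, since $\nabla f(t)=\nabla f(t^*)-2\widetilde G(t^*-t)$, taking the inner product with $t^*-t$ gives
\[
\nabla f(t)^\top(t^*-t)=2(\widetilde G t^*+w)^\top(t^*-t)-2(t^*-t)^\top\widetilde G(t^*-t).
\]
I would then use $\widetilde G=G-\widetilde D$ to write $\widetilde G t^*+w=(Gt^*+w)-\widetilde D t^*$ and substitute the identities \cref{r2-2}, \cref{r2-3}, and \cref{r2-20} from \cref{lem-1}; collecting terms should produce
\[
\nabla f(t)^\top(t^*-t)=-2\langle H^\dagger v,\,x^*-x\rangle-\big\langle x^*-x,\,[\Diag(\diag(Q))+2\mathring{Q}](x^*-x)\big\rangle .
\]
A direct check from the definitions in \cref{eq-barQ} gives $\Diag(\diag(Q))+2\mathring{Q}=2\overline Q$, so this collapses to $\nabla f(t)^\top(t^*-t)=-2\langle H^\dagger v,\,x^*-x\rangle-2\langle x^*-x,\,\overline Q(x^*-x)\rangle$.

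Next I would bound the two terms from above. As $\overline Q$ is Hermitian, $\langle x^*-x,\,\overline Q(x^*-x)\rangle\geqslant\lambda_{\min}(\overline Q)\|x^*-x\|_2^2$; and by \cref{lem-2} with $\xi=H^\dagger v$ and $\xi^*=x^*-x$, $-\langle H^\dagger v,\,x^*-x\rangle\leqslant\sqrt2\,\|H^\dagger v\|_\infty\|x^*-x\|_1$. Therefore it suffices to prove the strict inequality $\lambda_{\min}(\overline Q)\|x^*-x\|_2^2>\sqrt2\,\|H^\dagger v\|_\infty\|x^*-x\|_1$. The link between the two norms comes from the $M$-PSK structure: writing $\phi_j=|\theta^*_j-\theta_j|/2$ and using \cref{r2-7} and \cref{r2-0}, $\|x^*-x\|_2^2=4\sum_j\sin^2\phi_j$ and $\|x^*-x\|_1=2\sum_j\sin\phi_j$. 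For each $j$ with $\theta^*_j\neq\theta_j$, the angle $\phi_j$ lies in $\{\pi/M,2\pi/M,\ldots,(M-1)\pi/M\}$, so $\sin\phi_j\geqslant\sin(\pi/M)$; multiplying the inequality $\sin^2\phi_j\geqslant\sin(\pi/M)\sin\phi_j$ over $j$ yields $\|x^*-x\|_2^2\geqslant2\sin(\pi/M)\|x^*-x\|_1$.

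Finally, since $x\neq x^*$ we have $\|x^*-x\|_1>0$, and condition \cref{r2-24-two} rewritten as $2\lambda_{\min}(\overline Q)\sin(\pi/M)>\sqrt2\,\|H^\dagger v\|_\infty$ yields
\[
\lambda_{\min}(\overline Q)\|x^*-x\|_2^2\geqslant2\lambda_{\min}(\overline Q)\sin(\pi/M)\|x^*-x\|_1>\sqrt2\,\|H^\dagger v\|_\infty\|x^*-x\|_1 ,
\]
so $\nabla f(t)^\top(t^*-t)<0$. I expect the hard part to be the bookkeeping in the first step — correctly tracking the diagonal-block correction $\widetilde D$ and the factor $\frac12$ that relates $\mathring{Q}$, $\Diag(\diag(Q))$, and $\overline Q$ — rather than the closing trigonometric estimate, which is elementary once the PSK grid is exploited. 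It is also worth noting that \cref{r2-24-two} automatically forces $\lambda_{\min}(\overline Q)>0$, which legitimizes the monotone multiplications above.
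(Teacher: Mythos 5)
Your proposal is correct and follows essentially the same route as the paper: the same decomposition of $\nabla f(t)^\top(t^*-t)$ via $\widetilde G=G-\widetilde D$, the same use of the identities in \cref{lem-1} and of \cref{lem-2}, and the same identity $2\mathring{Q}+\Diag(\diag(Q))=2\overline Q$ leading to the bound $-2\lambda_{\min}(\overline Q)\|x^*-x\|_2^2+2\sqrt2\|H^\dagger v\|_\infty\|x^*-x\|_1$. The only (cosmetic) difference is the closing step, where you aggregate the PSK estimate $\sin\phi_j\geqslant\sin(\pi/M)$ into the norm inequality $\|x^*-x\|_2^2\geqslant2\sin(\pi/M)\|x^*-x\|_1$, whereas the paper argues termwise via the function $\Psi$; both rest on the same elementary fact.
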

\begin{proof} With $\nabla f(t) = 2\widetilde Gt + 2w$, there is
	\begin{eqnarray}\nonumber
	&&\nabla f(t)^\top(t^*-t)\\\nonumber
	&=&2(\widetilde{G}t+w)^\top(t^*-t)\\\nonumber
	&=&2(\widetilde{G}(t-t^*))^\top(t^*-t)+2(\widetilde{G}t^*+w)^\top(t^*-t)\\\nonumber
	&=&-2(t^*-t)^\top\widetilde{G}(t^*-t)+2(Gt^*+w)^\top(t^*-t)-2(\widetilde{D}t^*)^\top(t^*-t)\ \ ({\rm by\  \cref{tildeG}})\\\nonumber
	&=&-2\langle x^*- {x},\,\mathring{Q}(x^*- {x})\rangle+2\langle-H^\dagger v,\,x^*- {x}\rangle - \langle x^*- {x},\,\Diag(\diag(Q))(x^*- {x})\rangle\ \ ({\rm by\ \cref{lem-1}})\\\nonumber
  &=&-2\langle x^*- {x},\,\overline{Q}(x^*- {x})\rangle+2\langle-H^\dagger v,\,x^*- {x}\rangle\ \ {\rm(by\ \cref{eq-barQ}})\\\nonumber
	&\leqslant&-2\lambda_{\min}(\overline{Q}) \|x^*- {x}\|^2_2+2\sqrt 2\|H^\dagger v\|_\infty\|x^*- {x}\|_1\ \ ({\rm by\ \cref{lem-2}})\\\nonumber
	&=& -8\lambda_{\min}(\overline{Q})\sum_{j = 1}^n \sin^2\left(\frac{\theta_j^*-\theta_j}{2}\right)+ 4\sqrt 2\|H^\dagger v\|_\infty\sum_{j = 1}^n \sin\left(\left|\frac{\theta_j^*-\theta_j}{2} \right|\right)\ \ ({\rm by\ \cref{lem-1}})\\\nonumber
	&=& 4\sqrt 2\sum_{j = 1}^n \left(-\sqrt 2\lambda_{\min}(\overline Q)\sin^2\left(\frac{\theta_j^*-\theta_j}{2}\right)+ \|H^\dagger v\|_\infty\sin\left(\left|\frac{\theta_j^*-\theta_j}{2}\right|\right)\right)
		\\\nonumber
	&\triangleq&4\sqrt 2\sum_{j = 1}^n\Psi\left(\sin\left(\left|\frac{\theta_j^*-\theta_j}{2}\right|\right)\right).
	\end{eqnarray}
	If \cref{r2-24-two} holds, there is
 	$
 	 \lambda_{\min}(\overline{Q})>0
 	$
	and
	\[
	\frac{ \|H^\dagger v\|_\infty}{\sqrt 2\lambda_{\min}(\overline Q)}<\sin\left(\frac{\pi}{M}\right).
	\]
	Consequently, $\Psi(\cdot)$ is decreasing over the interval $\left[\left.\sin\left(\frac\pi M\right),+\infty\right)\right. $, implying that 
	\[
	\Psi\left(\sin\left(\frac\pi M\right)\right)<0.
	\]
	Therefore, with the fact that $x^*\neq \tilde x$ and $x^*, \ \tilde x \in \mathcal X$, we have
	\[
	\begin{aligned}
	\nabla f(t)^\top(t^*-t)\leqslant&4\sqrt 2\sum_{j = 1}^n\Psi\left(\sin\left(\left|\frac{\theta_j^*-\theta_j}{2}\right|\right)\right)\\
	=&4\sqrt 2\sum_{\theta_j^*\neq\theta_j}\Psi\left(\sin\left(\left|\frac{\theta_j^*-\theta_j}{2}\right|\right)\right)\\
	\leqslant&4\sqrt 2\sum_{\theta_j^*\neq\theta_j}\Psi\left(\sin\left(\frac{\pi}{M}\right)\right) <0.
	\end{aligned}
 	\]
 	The proof is completed.
\end{proof}

With \cref{thm-unique-add}, \cref{thm-PM}, and \cref{thm2-10}, we have the following result.
\begin{theorem}
Under conditions \cref{Cond2} and \cref{r2-24-two}, the sequence generated by {\rm \cref{alg-qp}} will converge to the unique global minimizer $t^*$ of problem {\cref{RSQP}}, which corresponds to the vector of transmitted
signals $x^*$ in \cref{MIMO}. 
\end{theorem}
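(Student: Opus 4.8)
The plan is to combine three ingredients. First, under \cref{Cond2}, \cref{thm-unique-add} together with \cref{thm-unique-rsqp2} shows that $t^*$ is the \emph{unique} global minimizer of \cref{RSQP}, and it corresponds to the transmitted signal $x^*$ via \cref{XT1}. Second, the sequence $\{t^k\}$ produced by \cref{alg-qp} stays inside the compact box $\mathbf{B}$, hence is bounded, and since \cref{eq-alg-con2} holds with $\tau_k\downarrow0$ and $\omega_k\uparrow+\infty$, \cref{thm-PM} guarantees that every accumulation point of $\{t^k\}$ is a stationary point of \cref{RSQP}. Third --- the genuinely new step --- I claim that under \cref{r2-24-two} the point $t^*$ is the \emph{only} stationary point of \cref{RSQP}. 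Granting this, the proof closes at once: a bounded sequence whose sole accumulation point is $t^*$ must converge to $t^*$.

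To prove the third claim, observe that the feasible set of \cref{RSQP} is a product of simplices and hence convex, so for any feasible $\bar t$ the segment $[\bar t,t^*]$ is feasible and stationarity of $\bar t$ forces $\nabla f(\bar t)^\top(t^*-\bar t)\geqslant0$; it therefore suffices to show $\nabla f(\bar t)^\top(t^*-\bar t)<0$ for \emph{every} feasible $\bar t\neq t^*$. This is exactly \cref{thm2-10} with the restriction ``$\bar t$ is a vertex'' removed. I would redo the computation in the proof of \cref{thm2-10} for an arbitrary feasible $\bar t$: write $\bar x=A\bar t+\mathrm{i}B\bar t\in\mathbb{C}^n$, so each $\bar x_j$ lies in the $M$-gon $\mathrm{conv}(\mathcal{X})$, and set $\delta=x^*-\bar x$. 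The identities \cref{r2-2} and \cref{r2-3} hold verbatim for any $\bar t$ (they use only $G=P^\top\widehat QP$ and the block structure $S_{jj}=Q_{jj}(\alpha\alpha^\top+\beta\beta^\top)$), while \cref{r2-20} --- the one place where $\bar t$ was a vertex --- must be replaced by the exact identity $(\widetilde{D} t^*)^\top(t^*-\bar t)=\sum_j Q_{jj}\RE(\overline{x_j^*}\,\delta_j)$, which differs from the vertex formula precisely by the curvature defect $1-|\bar x_j|^2\geqslant0$. Carrying this through exactly as in \cref{thm2-10} yields
\[\nabla f(\bar t)^\top(t^*-\bar t)=-2\langle\delta,\overline Q\,\delta\rangle-\sum_{j=1}^n Q_{jj}\bigl(1-|\bar x_j|^2\bigr)+2\langle-H^\dagger v,\,\delta\rangle,\]
which reduces to the expression in \cref{thm2-10} when $|\bar x_j|\equiv1$.

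It then remains to estimate the right-hand side termwise. Under \cref{r2-24-two} one has $\lambda_{\min}(\overline Q)>0$ (as in the proof of \cref{thm2-10}) and $Q_{jj}\geqslant2\lambda_{\min}(\overline Q)$, since $\overline{Q}_{jj}=\tfrac12Q_{jj}\geqslant\lambda_{\min}(\overline Q)$. Using $\langle\delta,\overline Q\delta\rangle\geqslant\lambda_{\min}(\overline Q)\|\delta\|_2^2$, \cref{lem-2}, the elementary identity $|\delta_j|^2+(1-|\bar x_j|^2)=2\RE(\overline{x_j^*}\,\delta_j)$, and the geometric fact that $\mathrm{conv}(\mathcal{X})$ lies inside the cone at its vertex $x_j^*$ of half-angle $\tfrac{\pi}{2}-\tfrac{\pi}{M}$ (so $\RE(\overline{x_j^*}\,\delta_j)\geqslant\sin(\tfrac{\pi}{M})|\delta_j|$), each summand of
\[\nabla f(\bar t)^\top(t^*-\bar t)\leqslant\sum_{j=1}^n\Bigl(-2\lambda_{\min}(\overline Q)|\delta_j|^2-Q_{jj}(1-|\bar x_j|^2)+2\sqrt2\,\|H^\dagger v\|_\infty|\delta_j|\Bigr)\]
either vanishes (if $\delta_j=0$) or is strictly negative, because then
\[2\lambda_{\min}(\overline Q)|\delta_j|^2+Q_{jj}(1-|\bar x_j|^2)\;\geqslant\;4\lambda_{\min}(\overline Q)\sin\!\Bigl(\tfrac{\pi}{M}\Bigr)|\delta_j|\;>\;2\sqrt2\,\|H^\dagger v\|_\infty|\delta_j|,\]
the last inequality being exactly \cref{r2-24-two}. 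Hence $\nabla f(\bar t)^\top(t^*-\bar t)<0$ whenever some $\delta_j\neq0$, i.e.\ whenever $\bar t\neq t^*$, proving the third claim and with it the theorem.

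The main obstacle is precisely this third step. \cref{thm2-10} only controls the vertices, but $f$ is merely multi-affine (linear in each block $\bar t_j$ separately, cf.\ \cref{prop-linear}), so it could a priori possess stationary points that are not vertices, and these must be excluded too; one genuinely has to push the gradient estimate to all feasible points. What makes that possible is (a) the corrected version of \cref{r2-20}, valid on $\mathrm{conv}(\mathcal{X})^n$, which contributes the extra nonpositive curvature term $-\sum_j Q_{jj}(1-|\bar x_j|^2)$, and (b) the $M$-gon cone inequality $\RE(\overline{x_j^*}\,\delta_j)\geqslant\sin(\pi/M)|\delta_j|$, which is exactly what lets the termwise bound close at the threshold \cref{r2-24-two}; a bound based only on $\|\delta\|_2$ and $\|\delta\|_1$, without this geometric input, loses a constant factor and does not suffice.
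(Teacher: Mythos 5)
Your argument is correct, and on the decisive step it is genuinely different from --- and stronger than --- the paper's. The paper's proof consists of exactly your first two ingredients, but for the third it only invokes \cref{thm2-10}, which controls $\nabla f(t)^\top(t^*-t)$ at feasible points of sparsity $n$ (the vertices of the product of simplices), and then passes directly from \cref{thm-PM} to convergence; non-vertex stationary points are never excluded, which is precisely the lacuna you identify. Your extension of the descent inequality to every feasible $\bar t\neq t^*$ closes that gap, and the details check out: the identities \cref{r2-2} and \cref{r2-3} do hold for arbitrary feasible $\bar t$ (they use only the linearity of $t\mapsto Pt$), your replacement for \cref{r2-20} with the defect term $1-|\bar x_j|^2$ is the correct generalization, and the cone inequality $\RE(\overline{x_j^*}\,\delta_j)\geqslant\sin(\pi/M)\,|\delta_j|$ holds because the two polygon edges meeting at $x_j^*$ lie exactly on the boundary rays of the cone with apex $x_j^*$ and half-angle $\pi/2-\pi/M$, so all of $\mathrm{conv}(\mathcal X)$ sits inside it. Together with $q_{jj}\geqslant 2\lambda_{\min}(\overline Q)>0$, the termwise estimate then closes exactly at the threshold \cref{r2-24-two}. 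The only point worth making explicit is the final equivalence ``some $\delta_j\neq 0$ iff $\bar t\neq t^*$'': since each $x_j^*$ is an extreme point of $\mathrm{conv}(\mathcal X)$, its representation as a convex combination of the vertices is unique, so $\delta=0$ forces $\bar t=t^*$. In short, the paper's proof silently assumes that accumulation points have sparsity $n$; your version supplies the missing geometric work on $\mathrm{conv}(\mathcal X)$ and yields a complete argument, at the cost of redoing the computation of \cref{thm2-10} off the vertex set.
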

 \begin{proof}
 Under condition \cref{Cond2}, \cref{thm-unique-add} implies that $t^*$ is the unique global minimizer of problem \cref{RSQP}. Together with \cref{thm2-10}, any feasible point of problem \cref{RSQP} with sparsity $n$ other than $t^*$ is not a stationary point, since $ t^*-t $ is a descent direction of the function $ f $ at $ t $ satisfying $ \left(\nabla f(t)\right)^\top(t^*-t)<0 $. Consequently, among all the points with sparsity $n$, $t^*$ is the unique stationary point of problem \cref{RSQP}. By \cref{thm-PM}, the accumulation point of the sequence generated by \cref{alg-qp} will converge to $t^*$, which  corresponds to the vector of transmitted
signals $x^*$. The proof is completed.
 \end{proof}
\begin{remark} As mentioned in \cite{so2010probabilistic},  if $H$ has i.i.d. standard complex Gaussian entries, then $H^\dagger H$ is very close to a diagonal matrix with a very high probability.  Assume that $H^\dagger H$ is a diagonal matrix, there is
	$
	 Q=\Diag(q_{11},\ldots,\,q_{nn})=2\overline Q.
	$
		In this case, condition \cref{r2-24-two} reduces to
	$\lambda_{\min}(H^\dagger H)\sin \left(\frac{\pi}{M}\right)>\sqrt 2\|H^\dagger v\|_\infty,$
	which is  in general stronger than condition {\cref{Cond2}}.
\end{remark}	

	\section{Numerical Results}\label{sec-numerical}
	In this section, we conduct extensive numerical tests to verify the efficiency of the proposed PN-QP algorithm. The algorithm is implemented in MATLAB (R2017a) and all the experiments are preformed on a Lenovo ThinkPad laptop with Intel dual core i5-6200 CPU (2.30 GHZ and 2.40 GHz) and 8 GB of memory running in Windows 10. We generate the instances of problem \cref{P} following the way in \cite{liu2017a,lu2019tightness}, which is detailed as follows:
	\vspace{6pt}
	\begin{itemize}
		\item[Step 1:]	Generate each entry of the channel matrix $ H\in {\mathbb{C}}^{m\times n} $ according to the complex standard Gaussian distribution (with zero mean and unit variance);
		
		\item[Step 2:]	Generate each entry of the noise vector $ v\in {\mathbb{C}}^m $ according to the complex Gaussian distribution with zero mean and variance $ \sigma^2 $;
		
		\item[Step 3:]	Choose $ k_j $ uniformly and randomly from $ \{0,1,\ldots,M-1\} $, and set $ x^*_j=\mathrm{exp}\left(\frac{2\pi k_j \mathrm{i}}{M}\right)  $ for each $ j\in\{1,\ldots,n\} $, where $ x^* $ is the vector of transmitted signals;
		
		\item[Step 4:]	Compute the vector of received signals $ r\in {\mathbb{C}}^m $ as in \cref{MIMO}.
	\end{itemize}
    \vspace{6pt}
    Generally, in practical digital communications, $ M $ is taken as an exponential power of $ 2 $. Therefore, in our following tests, we always choose $ M=2^l $, where $ l $ is a positive integer.
	In our setting, we define the \emph{signal-to-noise ratio} (SNR) as
	\[
	{\rm SNR}=10\log_{10} \left(\frac{\mathbb{E}[\|Hx^*\|_2^2]}{\mathbb{E}[\|v\|_2^2]}\right) =10\log_{10} \left(\dfrac{m\sigma^2_x}{\sigma^2_v}\right) ,
	\]
	where $ \sigma^2_x=\mathbb{E}[\|x^*\|_2^2] $, $ \sigma^2_v=\mathbb{E}[\|v\|_2^2] $, and $ \mathbb{E}[\cdot] $ is the expectation operator. Then according to our ways of generating instances (i.e., $ \sigma^2_x=n $, and $ \sigma^2_v=m\sigma^2 $), we have $ {\rm SNR}=10\log_{10}\left(\dfrac{n}{\sigma^2} \right)  $ in our tests. Generally, the MIMO detection problem is more difficult when the SNR is low and when the numbers of inputs and outputs are equal (i.e., $ m=n $). 

	\subsection{Performance of PN-QP}
	We set $ \rho=3 $, $ \omega_0=10 $, $ \varepsilon=0.01 $ and $ \tau_k=0.01 $ in \cref{alg-qp} and apply \cref{alg-round-j} as the rounding procedure after running \cref{alg-qp}.
	
	First, we demonstrate the efficiency of PN-QP by an example with $ (m,\,n,\,M)=(4,\,4,\,8) $, and $ {\rm SNR}=30 $ dB. The initial point of PN-QP is chosen as $ t^0=t_{\boldsymbol{e}} $, where
	\begin{equation}\label{te}
	t_{\boldsymbol{e}}=\left(\frac{1}{0.2+M}\right) \boldsymbol{e}\in\mathbb{R}^{nM}.
	\end{equation}
	We choose such an initial point $ t_{\boldsymbol{e}} $ since it is a feasible point of subproblem \cref{penaltymethod}, and it approximately satisfies the equality constraints in \cref{RSQP}, i.e., $ \left(\bar{t}_{\boldsymbol{e}} \right)_j^\top \boldsymbol{e}\approx1$,  $j=1,\ldots,n.  $
	We selectively plot the iterates $ \left\lbrace t^k\right\rbrace  $ in \cref{fig:it-t} with $ k=0 $, $ 10 $, and $ 25 $. In \cref{fig:it-t}, the `$ \ast $' denotes the vector $ t^* $ corresponding to the vector of transmitted signals $ x^* $ in \cref{MIMO} and the `$ \circ $' denotes the iterate $ t^k $ generated by PN-QP. It can be seen from \cref{fig:it-t} that as the iteration goes on, $ t^k $ becomes more and more sparse, and eventually, the support set of $ t^k $ at $ k=25 $ coincides with that of the true minimizer of problem \cref{RSQP}. 
	\begin{figure}[htbp]
		\centering
		\subfigure[$ k=0 $ (the initial point).]{
			\begin{minipage}[t]{0.9\linewidth}
				\centering
				\includegraphics[scale=0.7]{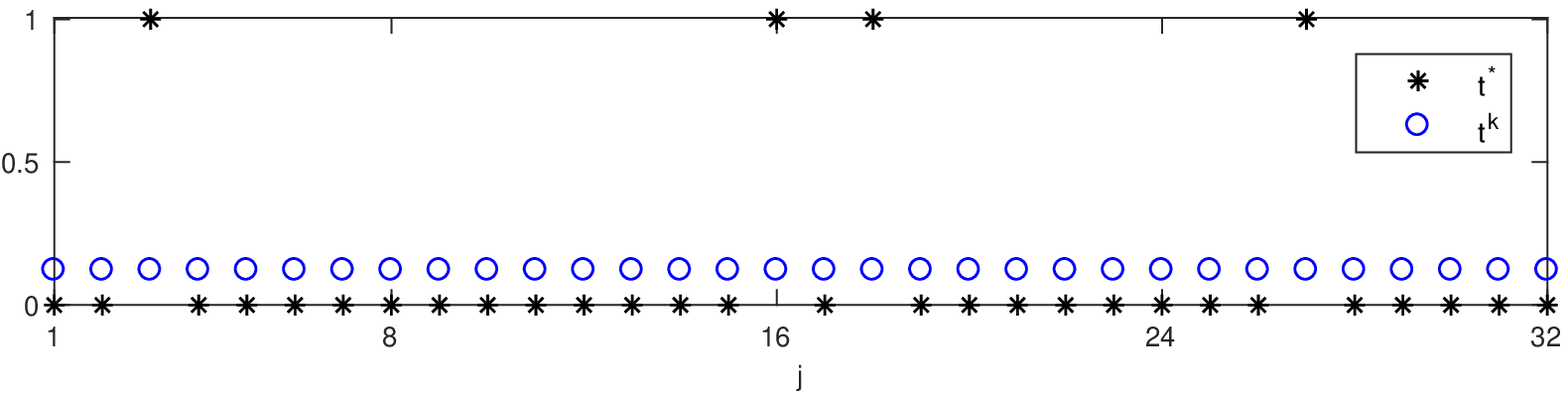}
			\end{minipage}%
		}\\
		\subfigure[$ k=10 $.]{
			\begin{minipage}[t]{0.9\linewidth}
				\centering
				\includegraphics[scale=0.7]{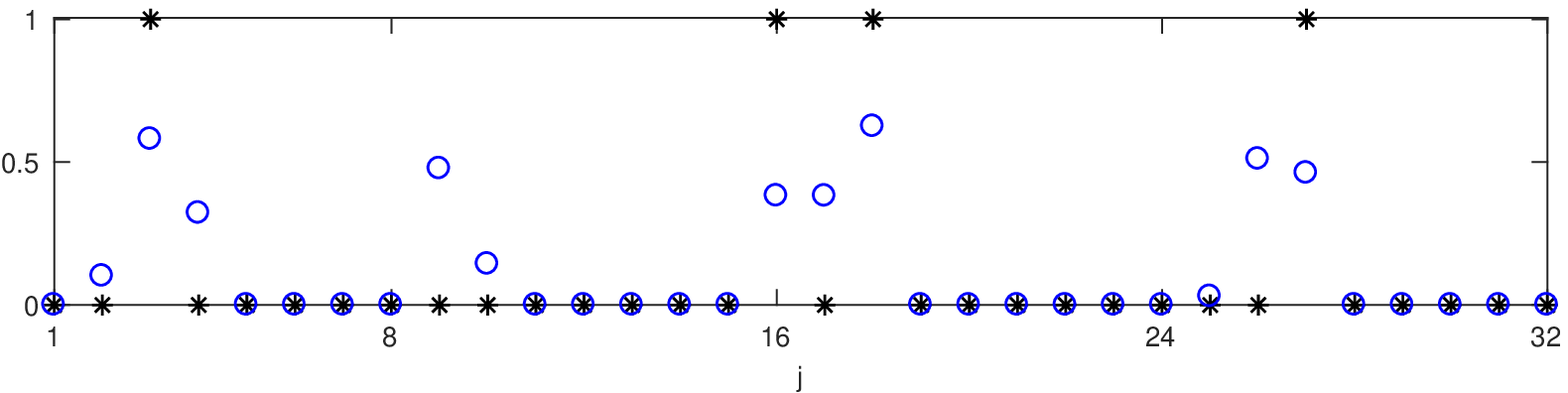}
			\end{minipage}%
		}\\
		\subfigure[$ k=25 $.]{
			\begin{minipage}[t]{0.9\linewidth}
				\centering
				\includegraphics[scale=0.7]{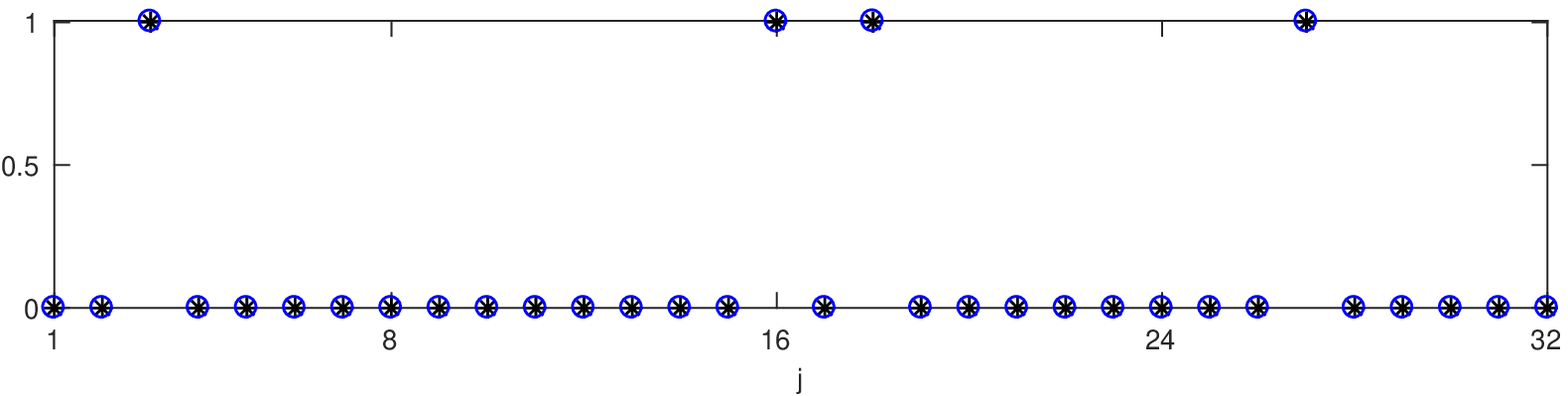}
			\end{minipage}
		}
		\centering
		\caption{Entries in $ t^k $ generated by {\rm PN-QP} with $ k=0,\ 10$, and $ 25 $.}
		\label{fig:it-t}
	\end{figure}
	
	\subsection{Comparison with Other Algorithms}
	In this subsection, we will compare the numerical performance of PN-QP for solving problem \cref{RSQP} with different models and the corresponding algorithms, which are detailed below. 
	\bit
	    \item Problem \cref{P} solved by GPM \cite{liu2017a}: GPM is essentially a gradient projection method whose projection step is taken directly over the discrete set $ \mathcal{X} $. Due to its low computational complexity, it is able to solve the large-scale problem. Moreover, in our implementation, we modify its output by choosing the best point among all generated iterates (to improve its performance), instead of simply using the last iterate $ x^k $ as the output.
	    \item Problem \cref{P} solved by SD\footnote{The code is downloaded from https://ww2.mathworks.cn/matlabcentral/fileexchange/22890-sphere-decoderfor-mimo-systems and modified by adopting the techniques proposed in \cite{chan2002a} to further improve its efficiency.} \cite{o2000lattice}: SD is a typical tree search based method which searches for constellation points limited to a sphere with a predetermined radius centered on the vector of received signals $ r $ to find the \emph{global} solution. However, the complexity of SD is generally exponential, and hence it is impractical to solve the large-scale problem.
	    \item Problem \cref{ERSDR1} solved by MOSEK\footnote{https://www.mosek.com} \cite{mosek}: There are several state-of-the-art solvers for SDRs including MOSEK and SDPNAL+ \cite{sun2020SDPNAL, yang2015SDPNAL, zhao2009a, zhao2010a}. Here we choose MOSEK since it is a classical interior-point algorithm based solver and is faster than SDPNAL+ for solving problem \cref{ERSDR1}. As shown in \cref{tab-1}, the size of problems \cref{ERSDR2} and \cref{ERSDR3} is significantly larger than that of problem \cref{ERSDR1}, and the three problems are (mathematically) equivalent due to \cite[Theorem 1]{liu2019on} and the discussions in \cref{sec-extensions}. Therefore, in our following tests, we do not compare the performance of solving problems \cref{ERSDR2} and \cref{ERSDR3}.
	    For $ (t^k,\,y^k,\,Y^k) $ of problem \cref{ERSDR1} returned by MOSEK, we perform the following rounding procedure to obtain a feasible point: project $ (y^k_j,\,y^k_{n+j}) $ to $ \mathcal{Y} $ in \cref{YJIHE}, and get $ (y^\star_j,\,y^\star_{n+j}) $ by
	    \[
	    (y^\star_j,\,y^\star_{n+j})\in\arg\underset{(u_1,\,u_2)\in\mathcal{Y}}{\min}\, (u_1-y^k_j)^2+(u_2-y^k_{n+j})^2,\ j=1,\ldots,n.
	    \]
	    Then return $ x_j=y^\star_j+\mathrm{i}y^\star_{n+j} $, $ j=1,\ldots,n $.
	\eit

    In our experiments, we set $ M\in\{2,\,8,\,16\}$, $ n\in\{16,\,32,\,64,\,128,\,256,\,512\} $, and $ m=n $ or $ m=2n $. 
    We use the following two metrics to evaluate the performance of different algorithms: the symbol error rate (denoted by SER) \cite{liu2017a,wai2011cheap} as well as the running time in seconds (denoted by Time). More specifically, the SER is used to evaluate the detection error rate of different algorithms, which is calculated by
    \[\frac{\text{The number of incorrectly recovered entries compared to }x^*}{\text{The length of transmitted signals }n}.\]
    Time is used to evaluate the speed of different algorithms, which is particularly important for solving large-scale problems. We limit the maximum running time of each algorithm to be 3600 seconds. That is, we will terminate the algorithm if its running time is over $3600$ seconds and we use ``---'' to denote such case. The reported results below are obtained by averaging over 100 randomly generated instances.
	
	{\bf Initial Points.}
	Since both PN-QP and GPM require the initial points, we first compare the effects of different initial points on the detection results of the two algorithms. We test three initial points: the zero vector $ \textbf{0} $, the vector $ t_{\boldsymbol{e}} $ defined as in \cref{te}, and the approximate solution $ t_{ml} $ obtained by the minimum mean square error (MMSE) detector \cite{liu2017a}. 
	We report the results in \cref{tab-initial points}, where for each setting $ (m,\,n,\,M,\,\text{SNR}) $ and each method, the winner is marked in bold among three initial points.
	
	\begin{sidewaystable}
		\centering
		\caption{Comparison of initial points for {\rm PN-QP} and {\rm GPM}.}
		\scalebox{1}{
			\begin{tabular}{|r|r|rrr|rrr|rrr|rrr|}
				\hline 
				\makecell[c]{\multirow{4}*{$ (m,\,n,\,M) $}} &  & \multicolumn{6}{c|}{Time(s)} & \multicolumn{6}{c|}{SER($ \% $)} \\ 
				\cline{3-14}
				& SNR & \multicolumn{3}{c|}{PN-QP} & \multicolumn{3}{c|}{GPM} & \multicolumn{3}{c|}{PN-QP} & \multicolumn{3}{c|}{GPM} \\
				& (dB) & \multicolumn{3}{c|}{\cref{RSQP}} & \multicolumn{3}{c|}{\cref{P}} & \multicolumn{3}{c|}{\cref{RSQP}} & \multicolumn{3}{c|}{\cref{P}} \\  
				\cline{3-14}
				&  & \makecell[c]{$ \textbf{0} $} & \makecell[c]{$ t_{\boldsymbol{e}} $} & \makecell[c]{$ t_{ml} $} & \makecell[c]{$ \textbf{0} $} & \makecell[c]{$ t_{\boldsymbol{e}} $} & \makecell[c]{$ t_{ml} $} & \makecell[c]{$ \textbf{0} $} & \makecell[c]{$ t_{\boldsymbol{e}} $} & \makecell[c]{$ t_{ml} $} & \makecell[c]{$ \textbf{0} $} & \makecell[c]{$ t_{\boldsymbol{e}} $} & \makecell[c]{$ t_{ml} $} \\ 
				\hline 
				\makecell[c]{\multirow{4}*{$ (64,\,32,\,8) $}}& 24 & 0.034 & 0.034 & \textbf{0.012} & 0.001 & 0.001 & \textbf{0.000} & 0.00 & 0.00 & 0.00 & 12.44 & 12.31 & \textbf{0.00}\\
				& 20 & 0.029 & 0.033 & \textbf{0.011} & 0.001 & 0.001 & \textbf{0.000} & 0.00 & 0.00 & 0.00 & 12.13 & 13.13 & \textbf{0.00}\\
				& 16 & 0.030 & 0.036 & \textbf{0.011} & 0.001 & 0.001 & \textbf{0.000} & 0.00 & 0.00 & 0.00 & 7.06 & 9.16 & \textbf{0.00}\\
				& 12 & 0.035 & 0.038 & \textbf{0.012} & 0.001 & 0.001 & \textbf{0.000} & 0.25 & 0.25 & 0.25 & 5.72 & 5.47 & \textbf{0.91}\\
				\hline 
				\makecell[c]{\multirow{4}*{$ (32,\,32,\,8) $}}& 24 & 0.046 & 0.050 & \textbf{0.014} & 0.001 & 0.001 & \textbf{0.000} & 0.19 & \textbf{0.00} & \textbf{0.00} & 56.22 & 58.28 & \textbf{1.25}\\
				& 20 & 0.045 & 0.048 & \textbf{0.017} & 0.001 & 0.001 & \textbf{0.000} & 1.06 & \textbf{0.25} & \textbf{0.25} & 57.50 & 57.19 & \textbf{2.78}\\
				& 16 & 0.058 & 0.052 & \textbf{0.023} & 0.001 & 0.001 & 0.001 & 3.59 & \textbf{2.00} & 4.25 & 57.63 & 55.75 & \textbf{11.09}\\
				& 12 & 0.063 & 0.063 & \textbf{0.031} & 0.001 & 0.001 & 0.001 & 17.59 & \textbf{16.13} & 20.28 & 52.13 & 52.16 & \textbf{25.44}\\
				\hline 
				\makecell[c]{\multirow{4}*{$ (64,\,32,\,16) $}}& 24 & 0.079 & 0.091 & \textbf{0.027} & 0.001 & 0.001 & \textbf{0.000} & 0.00 & 0.00 & 0.00 & 40.75 & 40.41 & \textbf{0.00}\\
				& 20 & 0.082 & 0.095 & \textbf{0.028} & 0.001 & 0.001 & \textbf{0.000} & 0.00 & 0.00 & 0.00 & 30.06 & 30.72 & \textbf{0.06}\\
				& 16 & 0.096 & 0.115 & \textbf{0.036} & 0.001 & 0.001 & \textbf{0.000} & 2.63 & \textbf{2.31} & 2.78 & 32.06 & 29.94 & \textbf{3.22}\\
				& 12 & 0.114 & 0.123 & \textbf{0.057} & 0.001 & 0.001 & 0.001 & 18.13 & \textbf{17.94} & 18.88 & 28.16 & 29.91 & \textbf{18.84}\\
				\hline 
				\makecell[c]{\multirow{4}*{$ (32,\,32,\,16) $}}& 24 & 0.133 & 0.142 & \textbf{0.095} & 0.002 & 0.001 & \textbf{0.000} & 1.44 & \textbf{0.53} & 5.28 & 67.28 & 66.00 & \textbf{15.91}\\
				& 20 & 0.171 & 0.176 & \textbf{0.133} & 0.002 & 0.001 & \textbf{0.000} & 6.75 & \textbf{5.19} & 13.09 & 68.53 & 69.34 & \textbf{22.03}\\
				& 16 & 0.190 & 0.196 & \textbf{0.172} & 0.002 & \textbf{0.001} & \textbf{0.001} & 26.91 & \textbf{25.41} & 32.41 & 72.69 & 71.69 & \textbf{38.63}\\
				& 12 & 0.198 & 0.203 & \textbf{0.158} & 0.001 & 0.001 & 0.001 & 48.34 & \textbf{46.53} & 48.84 & 68.91 & 69.53 & \textbf{52.56}\\
				\hline 
				\makecell[c]{\multirow{4}*{$ (1024,\,512,\,8) $}}& 24 & 0.746 & 0.653 & \textbf{0.245} & 1.623 & 1.802 & \textbf{0.136} & 0.00 & 0.00 & 0.00 & 14.38 & 15.52 & \textbf{0.00}\\
				& 20 & 0.767 & 0.676 & \textbf{0.240} & 0.891 & 1.175 & \textbf{0.134} & 0.00 & 0.00 & 0.00 & 5.82 & 9.28 & \textbf{0.00}\\
				& 16 & 0.837 & 0.748 & \textbf{0.259} & 0.543 & 0.523 & \textbf{0.145} & 0.00 & 0.00 & 0.00 & 1.86 & 2.28 & \textbf{0.00}\\
				& 12 & 1.153 & 1.053 & \textbf{0.583} & 0.351 & 0.351 & \textbf{0.201} & 0.26 & \textbf{0.25} & 0.27 & 0.42 & 0.40 & \textbf{0.38}\\
				\hline 
				\makecell[c]{\multirow{4}*{$ (512,\,512,\,8) $}}& 24 & 1.235 & 1.108 & \textbf{0.461} & 1.519 & 1.471 & \textbf{0.116} & 0.00 & 0.00 & 0.00 & 57.48 & 57.74 & \textbf{0.00}\\
				& 20 & 1.317 & 1.190 & \textbf{0.723} & 1.525 & 1.492 & \textbf{0.141} & 0.00 & 0.00 & 0.00 & 57.78 & 58.20 & \textbf{0.00}\\
				& 16 & 1.810 & \textbf{1.621} & 1.676 & 1.555 & 1.430 & \textbf{0.923} & 0.10 & 0.10 & 0.10 & 58.14 & 58.38 & \textbf{8.55}\\
				& 12 & \textbf{3.101} & 3.155 & 3.520 & 1.514 & 1.470 & \textbf{0.733} & 17.41 & \textbf{17.00} & 20.69 & 58.51 & 59.03 & \textbf{24.07}\\
				\hline 
				\makecell[c]{\multirow{4}*{$ (1024,\,512,\,16) $}}& 24 & 1.469 & 1.431 & \textbf{0.512} & 3.646 & 3.900 & \textbf{0.136} & 0.00 & 0.00 & 0.00 & 55.38 & 55.37 & \textbf{0.00}\\
				& 20 & 1.714 & 1.664 & \textbf{0.651} & 3.815 & 3.948 & \textbf{0.168} & 0.01 & 0.01 & 0.01 & 53.86 & 54.08 & \textbf{0.01}\\
				& 16 & 2.616 & 2.492 & \textbf{1.852} & 4.094 & 3.983 & \textbf{0.295} & \textbf{1.92} & 1.95 & 2.11 & 47.75 & 48.16 & \textbf{2.04}\\
				& 12 & 4.082 & \textbf{3.896} & 4.412 & 1.394 & 1.430 & \textbf{0.407} & \textbf{18.64} & \textbf{18.64} & 19.69 & 25.36 & 25.53 & \textbf{19.82}\\
				\hline 
				\makecell[c]{\multirow{4}*{$ (512,\,512,\,16) $}}& 24 & 3.403 & \textbf{3.116} & 3.193 & 2.311 & 2.401 & \textbf{2.243} & 0.00 & 0.00 & 0.00 & 77.61 & 77.68 & \textbf{23.54}\\
				& 20 & 7.071 & \textbf{6.434} & 8.134 & 2.426 & 2.450 & \textbf{2.235} & \textbf{3.13} & \textbf{3.13} & 8.98 & 77.97 & 77.97 & \textbf{33.35}\\
				& 16 & 6.525 & \textbf{6.355} & 7.881 & 2.337 & 2.586 & \textbf{2.270} & 29.20 & \textbf{29.18} & 32.58 & 77.72 & 77.69 & \textbf{45.58}\\
				& 12 & 6.900 & \textbf{6.686} & 8.725 & 2.326 & \textbf{2.199} & 2.360 & 48.27 & \textbf{48.14} & 50.17 & 78.29 & 78.13 & \textbf{55.01}\\
				\hline 
			\end{tabular}
		}
		\label{tab-initial points}
	\end{sidewaystable}

	It can be observed from \cref{tab-initial points} that in terms of the time for PN-QP, for small scale of $ (m,\,n) $, $ t_{ml} $ takes the smallest time whereas for large scale of $ (m,\,n) $, $ t_{\boldsymbol{e}} $ and $ t_{ml} $ are comparable. Coming to the SER for PN-QP, $ t_{\boldsymbol{e}} $ is more favorable among the three choices. 
	In comparison, the SER by GPM varies quite a lot among the three choices of the initial point, and $ t_{ml} $ is definitely the best initial point for GPM which will lead to a much smaller SER. In terms of the running time, it seems that $ t_{ml} $ for PN-QP is preferable for small-scale problems whereas $ t_{\boldsymbol{e}} $ leads to the smallest running time for PN-QP among the three choices when the size of problem $ n $ is large (i.e., $ n=512 $). For GPM, $ t_{ml} $ is also the winner from the perspective of running time.
	Based on the above observations, in our following test, we choose $ t_{\boldsymbol{e}} $ as the initial point for PN-QP and the MMSE estimator $ t_{ml} $ as the initial point for GPM. Here we would like to highlight that the numerical results with $ n\geqslant 128 $ which we show in this paper have not appeared in literature.
	
	{\bf Results on Problems with $M\leqslant 8$.}
	Next, we compare the performance of the four algorithms in the cases that $ M=2 $ and $ M=8 $.
	We also report the \emph{no interference lower bound} (LB) results. This approach solves the MIMO detection problem with respect to each component $x_j$ assuming all the others being fixed to be the true transmitted signals. Again, the SER is obtained by dividing the total number of incorrectly estimated elements over the length of transmitted signals. The solution returned by LB can be viewed as the best possible result that the MIMO detection problem can be solved theoretically. Therefore, the above no interference LB can be used as the theoretical (and generally unachievable especially in the low SNR scenarios) lower bound of the SER of all the other approaches. 
	
	It can be observed from \cref{tab-all2} and \cref{tab-all} that as the SNR decreases, the SER achieved by each algorithm increases, implying that the problem becomes more difficult. As shown in \cref{tab-all2}, when $ M=2 $, all methods perform well in terms of the SER. 
	
	From \cref{tab-all2} and \cref{tab-all}, one can see that the running time for SD becomes longer and even prohibitively high as the SNR decreases/$ n $ increases, despite that SD provides the best SER among the four algorithms. For the other three algorithms, in general PN-QP provides the best SER, and MOSEK performs better than GPM in terms of the SER. For example, for $ (m,\,n,\,M)=(128,\,128,\,8) $ and $ {\rm SNR}=14 $ dB, PN-QP takes $ 0.277 $ seconds to return a solution with $ {\rm SER}=4.57\% $, whereas MOSEK takes about one second to return a solution with a larger SER $ 7.14\% $, and GPM returns a solution with the SER being $ 17.13\% $ instantly (i.e., $ 0.012 $ seconds). For such example, SD fails to return a solution within one hour. For the running time, GPM for solving problem \cref{P} is the fastest one since it only involves gradient calculation and projection onto a discrete set. PN-QP for solving problem \cref{RSQP} is fairly fast and its running time increases slowly as $ n $ increases. This is due to the vector formulation of problem \cref{RSQP}. MOSEK for solving problem \cref{ERSDR1} is not as fast as GPM and PN-QP. As $ n $ increases, the running time increases much faster than that for PN-QP. Comparing $(m,\,n,\,M)=(256,\,256,\,8) $ with $(m,\,n,\,M)=(512,\,512,\,8)$, it can be observed that as $n$ increases from $256$ to $512$, the running time for MOSEK increases from about $10$ seconds to about $85$ seconds.
	This can be explained by the numbers of the variables (one $2n \times 2n$ matrix variable and one $2n+nM$ vector variable) and constraints (one  $4n\times 4n$ positive semidefinite constraint, $6n$ equality constraints, and $nM$ inequality constraints) in \cref{tab-1}. Therefore, in our subsequent test, we will not include SD and MOSEK.
	
	To better understand the detection performance of the four algorithms, we plot \cref{fig:ser-small}, showing the SER with respect to the SNR for each algorithm. 
	It can be seen from \cref{fig:ser-small} that for $ (m,\,n,\,M)=(16,\,16,\,8) $, SD performs the best since its SER curve coincides with the LB when the SNR is large. MOSEK also performs very well since the curve of MOSEK becomes parallel to the LB, i.e., a constant SER gap. However, this is not the case for PN-QP and GPM. For $ (m,\,n,\,M)=(32,\,16,\,8) $, PN-QP, MOSEK, and SD are competitive, whereas SD is the best one.
	\begin{table}[!htb]
		\centering
		\caption{Time and SER comparison of {\rm PN-QP}, {\rm MOSEK}, {\rm GPM}, and {\rm SD} for solving MIMO detection problems with $M=2$.}
		\scalebox{0.78}{
			\begin{tabular}{|r|r|r|r|r|r|r|r|r|r|r|}
				\hline 
				\makecell[c]{\multirow{3}*{$ (m,\,n,\,M) $}} & \makecell[c]{\multirow{3}*{SNR}} & \multicolumn{4}{c|}{Time(s)} & \multicolumn{5}{c|}{SER($ \% $)} \\ 
				\cline{3-11}
				&  & \makecell[c]{\scriptsize{PN-QP}} & \makecell[c]{\scriptsize{MOSEK}} & \makecell[c]{\scriptsize{GPM}} & \makecell[c]{\scriptsize{SD}} & \makecell[c]{\scriptsize{PN-QP}} & \makecell[c]{\scriptsize{MOSEK}} & \makecell[c]{\scriptsize{GPM}} & \makecell[c]{\scriptsize{SD}} & \makecell[c]{\scriptsize{LB}} \\
				& (dB) & \makecell[c]{\scriptsize{\cref{RSQP}}} & \makecell[c]{\scriptsize{\cref{ERSDR1}}} & \makecell[c]{\scriptsize{\cref{P}}} & \makecell[c]{\scriptsize{\cref{P}}} & \makecell[c]{\scriptsize{\cref{RSQP}}} & \makecell[c]{\scriptsize{\cref{ERSDR1}}} & \makecell[c]{\scriptsize{\cref{P}}} & \makecell[c]{\scriptsize{\cref{P}}} &   \\ 
				\hline 
				\makecell[c]{\multirow{6}*{$ (32,\,32,\,2) $}} & 22 & 0.004 & 0.338 & 0.001 & 0.003 & 0.00 & 0.00 & 0.00 & 0.00 & 0.00\\
				& 20 & 0.004 & 0.329 & 0.001 & 0.003 & 0.00 & 0.00 & 0.00 & 0.00 & 0.00\\
				& 18 & 0.004 & 0.335 & 0.001 & 0.005 & 0.00 & 0.00 & 0.00 & 0.00 & 0.00\\
				& 16 & 0.005 & 0.338 & 0.001 & 0.012 & 0.00 & 0.00 & 0.00 & 0.00 & 0.00\\
				& 14 & 0.004 & 0.348 & 0.001 & 0.047 & 0.00 & 0.00 & 0.03 & 0.00 & 0.00\\
				& 12 & 0.004 & 0.369 & 0.001 & 0.110 & 0.00 & 0.00 & 0.16 & 0.00 & 0.00\\
				\hline
				\makecell[c]{\multirow{6}*{$ (64,\,64,\,2) $}} & 22 & 0.008 & 0.456 & 0.001 & 0.007 & 0.00 & 0.00 & 0.00 & 0.00 & 0.00\\
				& 20 & 0.008 & 0.460 & 0.001 & 0.013 & 0.00 & 0.00 & 0.00 & 0.00 & 0.00\\
				& 18 & 0.008 & 0.469 & 0.001 & 0.035 & 0.00 & 0.00 & 0.00 & 0.00 & 0.00\\
				& 16 & 0.008 & 0.503 & 0.001 & 0.158 & 0.00 & 0.00 & 0.00 & 0.00 & 0.00\\
				& 14 & 0.008 & 0.534 & 0.001 & 1.392 & 0.00 & 0.00 & 0.00 & 0.00 & 0.00\\
				& 12 & 0.008 & 0.607 & 0.001 & 10.251 & 0.00 & 0.00 & 0.09 & 0.00 & 0.00\\
				\hline
				\makecell[c]{\multirow{6}*{$ (128,\,128,\,2) $}} & 22 & 0.025 & 1.256 & 0.003 & 0.815 & 0.00 & 0.00 & 0.00 & 0.00 & 0.00\\
				& 20 & 0.025 & 1.319 & 0.003 & 12.260 & 0.00 & 0.00 & 0.00 & 0.00 & 0.00\\
				& 18 & 0.026 & 1.500 & 0.003 & 284.234 & 0.00 & 0.00 & 0.00 & 0.00 & 0.00\\
				& 16 & 0.030 & 1.723 & 0.004 & 2348.488 & 0.00 & 0.00 & 0.00 & 0.00 & 0.00\\
				& 14 & 0.030 & 1.783 & 0.004 & --- & 0.00 & 0.00 & 0.00 & --- & 0.00\\
				& 12 & 0.027 & 2.158 & 0.003 & --- & 0.00 & 0.00 & 0.02 & --- & 0.00\\
				\hline
				\makecell[c]{\multirow{6}*{$ (256,\,256,\,2) $}} & 22 & 0.111 & 7.249 & 0.013 & --- & 0.00 & 0.00 & 0.00 & --- & 0.00\\
				& 20 & 0.112 & 7.408 & 0.013 & --- & 0.00 & 0.00 & 0.00 & --- & 0.00\\
				& 18 & 0.113 & 7.683 & 0.014 & --- & 0.00 & 0.00 & 0.00 & --- & 0.00\\
				& 16 & 0.115 & 8.502 & 0.014 & --- & 0.00 & 0.00 & 0.00 & --- & 0.00\\
				& 14 & 0.114 & 12.064 & 0.014 & --- & 0.00 & 0.00 & 0.00 & --- & 0.00\\
				& 12 & 0.117 & 15.292 & 0.015 & --- & 0.00 & 0.00 & 0.01 & --- & 0.00\\
				\hline
				\makecell[c]{\multirow{6}*{$ (512,\,512,\,2) $}} & 22 & 0.198 & 58.730 & 0.092 & --- & 0.00 & 0.00 & 0.00 & --- & 0.00\\
				& 20 & 0.200 & 61.319 & 0.092 & --- & 0.00 & 0.00 & 0.00 & --- & 0.00\\
				& 18 & 0.201 & 64.035 & 0.092 & --- & 0.00 & 0.00 & 0.00 & --- & 0.00\\
				& 16 & 0.207 & 71.433 & 0.097 & --- & 0.00 & 0.00 & 0.00 & --- & 0.00\\
				& 14 & 0.216 & 113.457 & 0.106 & --- & 0.00 & 0.00 & 0.00 & --- & 0.00\\
				& 12 & 0.230 & 135.229 & 0.110 & --- & 0.00 & 0.00 & 0.00 & --- & 0.00\\
				\hline 
		\end{tabular}}
		\label{tab-all2}
	\end{table}
	\begin{table}[!htb]
		\centering
		\caption{Time and SER comparison of {\rm PN-QP}, {\rm MOSEK}, {\rm GPM}, and {\rm SD} for solving MIMO detection problems with $M=8$.}
		\scalebox{0.78}{
			\begin{tabular}{|r|r|r|r|r|r|r|r|r|r|r|}
				\hline 
				\makecell[c]{\multirow{3}*{$ (m,\,n,\,M) $}} & \makecell[c]{\multirow{3}*{SNR}} & \multicolumn{4}{c|}{Time(s)} & \multicolumn{5}{c|}{SER($ \% $)} \\ 
				\cline{3-11}
				&  & \makecell[c]{\scriptsize{PN-QP}} & \makecell[c]{\scriptsize{MOSEK}} & \makecell[c]{\scriptsize{GPM}} & \makecell[c]{\scriptsize{SD}} & \makecell[c]{\scriptsize{PN-QP}} & \makecell[c]{\scriptsize{MOSEK}} & \makecell[c]{\scriptsize{GPM}} & \makecell[c]{\scriptsize{SD}} & \makecell[c]{\scriptsize{LB}} \\
				& (dB) & \makecell[c]{\scriptsize{\cref{RSQP}}} & \makecell[c]{\scriptsize{\cref{ERSDR1}}} & \makecell[c]{\scriptsize{\cref{P}}} & \makecell[c]{\scriptsize{\cref{P}}} & \makecell[c]{\scriptsize{\cref{RSQP}}} & \makecell[c]{\scriptsize{\cref{ERSDR1}}} & \makecell[c]{\scriptsize{\cref{P}}} & \makecell[c]{\scriptsize{\cref{P}}} &   \\ 
				\hline
				\makecell[c]{\multirow{6}*{$ (32,\,32,\,8) $}} & 22 & 0.044 & 0.362 & 0.001 & 0.208 & 0.00 & 0.00 & 1.31 & 0.00 & 0.00\\
				& 20 & 0.050 & 0.307 & 0.001 & 1.666 & 0.00 & 0.13 & 2.13 & 0.00 & 0.00\\
				& 18 & 0.051 & 0.340 & 0.001 & 19.923 & 0.00 & 0.25 & 4.06 & 0.00 & 0.00\\
				& 16 & 0.051 & 0.335 & 0.001 & 305.642 & 1.63 & 1.94 & 9.59 & 0.09 & 0.03\\
				& 14 & 0.060 & 0.403 & 0.002 & 2495.598 & 3.13 & 6.88 & 21.88 & 3.13 & 0.00\\
				& 12 & 0.067 & 0.351 & 0.001 & --- & 13.94 & 14.81 & 23.47 & --- & 3.69\\
				\hline
				\makecell[c]{\multirow{6}*{$ (64,\,64,\,8) $}} & 22 & 0.126 & 0.513 & 0.002 & 769.366 & 0.00 & 0.00 & 0.22 & 0.00 & 0.00\\
				& 20 & 0.134 & 0.537 & 0.002 & --- & 0.00 & 0.00 & 0.06 & --- & 0.00\\
				& 18 & 0.132 & 0.507 & 0.003 & --- & 0.00 & 0.39 & 3.09 & --- & 0.00\\
				& 16 & 0.137 & 0.519 & 0.002 & --- & 0.78 & 1.84 & 7.94 & --- & 0.02\\
				& 14 & 0.150 & 0.510 & 0.003 & --- & 5.92 & 7.28 & 15.53 & --- & 0.66\\
				& 12 & 0.188 & 0.506 & 0.002 & --- & 16.34 & 16.44 & 24.97 & --- & 3.38\\
				\hline
				\makecell[c]{\multirow{6}*{$ (128,\,128,\,8) $}} & 22 & 0.156 & 1.763 & 0.004 & --- & 0.00 & 0.00 & 0.11 & --- & 0.00\\
				& 20 & 0.164 & 1.730 & 0.005 & --- & 0.00 & 0.00 & 0.95 & --- & 0.00\\
				& 18 & 0.172 & 1.701 & 0.006 & --- & 0.01 & 0.24 & 2.64 & --- & 0.01\\
				& 16 & 0.189 & 1.675 & 0.010 & --- & 0.38 & 1.88 & 6.73 & --- & 0.06\\
				& 14 & 0.277 & 1.651 & 0.012 & --- & 4.57 & 7.14 & 17.13 & --- & 0.63\\
				& 12 & 0.307 & 1.555 & 0.008 & --- & 15.89 & 15.47 & 23.63 & --- & 3.27\\
				\hline
				\makecell[c]{\multirow{6}*{$ (256,\,256,\,8) $}} & 22 & 0.342 & 10.558 & 0.018 & --- & 0.00 & 0.00 & 0.00 & --- & 0.00\\
				& 20 & 0.353 & 10.290 & 0.029 & --- & 0.00 & 0.02 & 0.98 & --- & 0.00\\
				& 18 & 0.379 & 10.153 & 0.041 & --- & 0.00 & 0.32 & 2.02 & --- & 0.00\\
				& 16 & 0.450 & 10.144 & 0.076 & --- & 0.08 & 2.08 & 7.54 & --- & 0.05\\
				& 14 & 0.587 & 9.799 & 0.097 & --- & 4.11 & 7.45 & 15.85 & --- & 0.75\\
				& 12 & 0.660 & 9.748 & 0.067 & --- & 16.60 & 16.25 & 23.22 & --- & 3.18\\
				\hline
				\makecell[c]{\multirow{6}*{$ (512,\,512,\,8) $}}& 22 & 1.193 & 88.809 & 0.134 & --- & 0.00 & 0.00 & 0.00 & --- & 0.00\\
				& 20 & 1.303 & 91.365 & 0.177 & --- & 0.00 & 0.01 & 0.09 & --- & 0.00\\
				& 18 & 1.436 & 90.253 & 0.297 & --- & 0.00 & 0.25 & 0.88 & --- & 0.00\\
				& 16 & 1.825 & 88.638 & 0.929 & --- & 0.08 & 2.23 & 6.68 & --- & 0.06\\
				& 14 & 3.177 & 86.758 & 1.455 & --- & 2.46 & 7.29 & 16.82 & --- & 0.66\\
				& 12 & 3.482 & 85.408 & 0.765 & --- & 16.41 & 16.15 & 22.97 & --- & 3.17\\
				\hline 
		\end{tabular}}
		\label{tab-all}
	\end{table}
    \begin{figure}[htbp]
    	\centering
    	\subfigure[$ (m,\,n,\,M)=(16,\,16,\,8) $.]{
    		\begin{minipage}[t]{0.48\linewidth}
    			\centering
    			\includegraphics[scale=0.52]{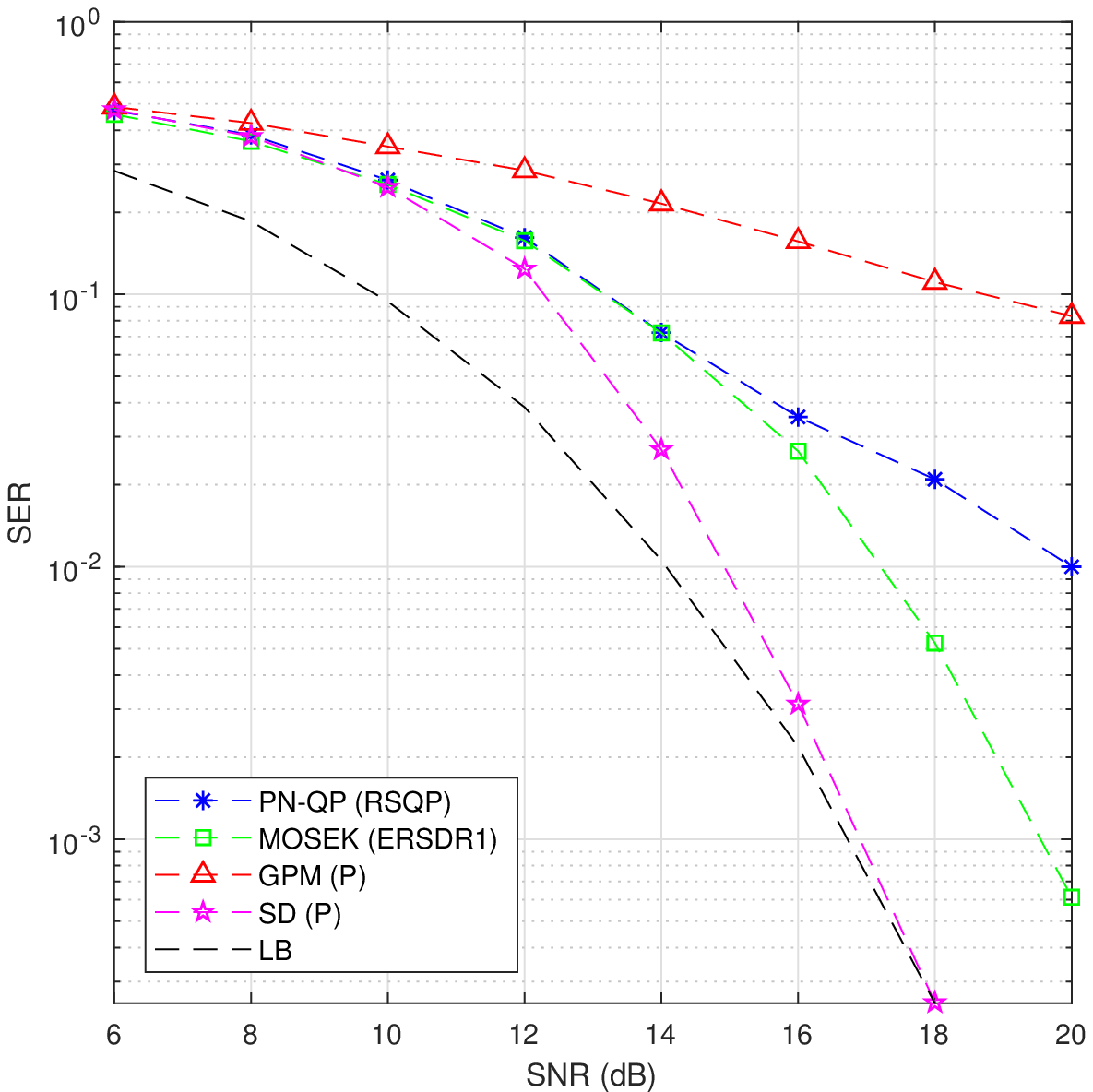}
    		\end{minipage}%
    	}%
    	\subfigure[$ (m,\,n,\,M)=(32,\,16,\,8) $.]{
    		\begin{minipage}[t]{0.48\linewidth}
    			\centering
    			\includegraphics[scale=0.52]{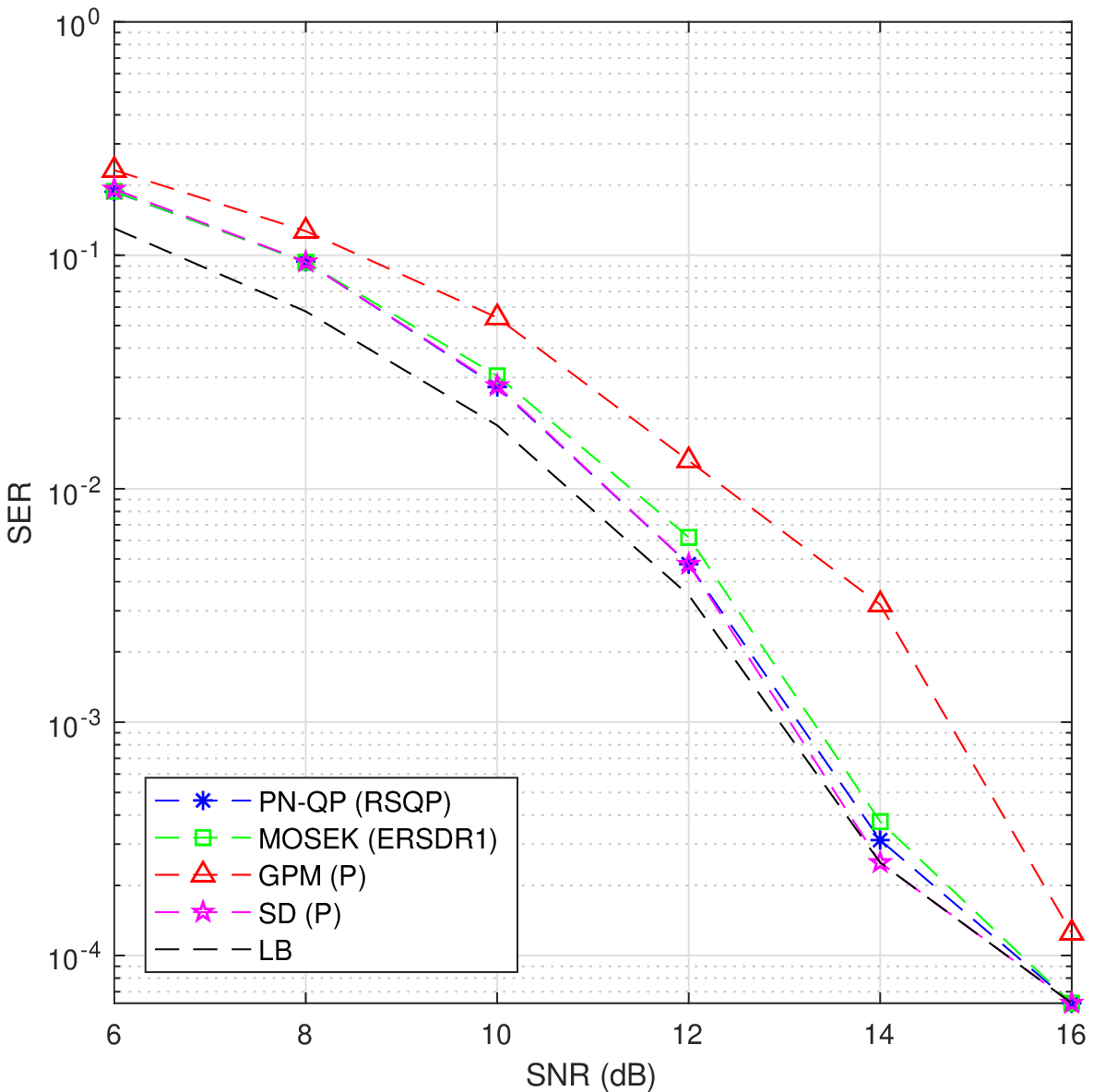}
    		\end{minipage}%
    	}
    	\centering
    	\caption{The {\rm \text{SER}} performance of the four algorithms under different {\rm \text{SNR}}s.}
    	\label{fig:ser-small}
    \end{figure}
	
	{\bf Results on Problems with $M\geqslant 8$.}
	We further compare the performance of PN-QP and GPM on $M=16$.   
	According to \cref{tab-pn-gpm}, as the SNR decreases, PN-QP provides a lower SER than GPM, implying that PN-QP achieves better detection performance. For instance, for $ (m,\,n,\,M)=(512,\,512,\,16) $ with $ {\rm SNR}=20 $ dB, PN-QP returns a solution with $ {\rm SER}=3.46\% $, whereas the SER of GPM is $ 33.67\% $.
	We also present more comparisons of the two algorithms in \cref{fig:ser-large}. 
	It can be observed from \cref{fig:ser-large} that as the SNR increases, the SER curve of PN-QP tends to coincide with the LB. In contrast, there is a large gap between the SER curve of GPM with the LB, especially in the case where $m=n$.
	\begin{table}[!htb]
		{\footnotesize
			\caption{Time and SER comparison of {\rm PN-QP} and {\rm GPM} for solving MIMO detection problems with $M=16$.}\label{tab-pn-gpm}
			\begin{center}
				\scalebox{0.9}{
					\begin{tabular}{|r|r|r|r|r|r|r|}
						\hline 
						\makecell[c]{\multirow{3}*{$ (m,\,n,\,M) $}} & \makecell[c]{\multirow{3}*{SNR}} & \multicolumn{2}{c|}{Time(s)} & \multicolumn{3}{c|}{SER($ \% $)}\\ 
						\cline{3-7}
						&  & \makecell[c]{\footnotesize{PN-QP}} & \makecell[c]{\footnotesize{GPM}} & \makecell[c]{\footnotesize{PN-QP}} & \makecell[c]{\footnotesize{GPM}} & \makecell[c]{\footnotesize{LB}} \\
						& (dB) & \makecell[c]{\footnotesize{\cref{RSQP}}} & \makecell[c]{\footnotesize{\cref{P}}} & \makecell[c]{\footnotesize{\cref{RSQP}}} & \makecell[c]{\footnotesize{\cref{P}}} &  \\
						\hline
						\makecell[c]{\multirow{4}*{$ (256,\,128,\,16) $}} & 30 & 0.203 & 0.005 & 0.00 & 0.00 & 0.00\\
						& 25 & 0.210 & 0.004 & 0.00 & 0.00 & 0.00\\
						& 20 & 0.235 & 0.004 & 0.02 & 0.02 & 0.01\\
						& 15 & 0.395 & 0.007 & 4.49 & 5.23 & 2.66\\
						\hline
						\makecell[c]{\multirow{4}*{$ (128,\,128,\,16) $}} & 30 & 0.386 & 0.007 & 0.00 & 2.81 & 0.00\\
						& 25 & 0.406 & 0.018 & 0.00 & 17.41 & 0.00\\
						& 20 & 0.549 & 0.023 & 8.42 & 34.49 & 0.74\\
						& 15 & 0.572 & 0.023 & 33.09 & 46.80 & 12.58\\
						\hline
						\makecell[c]{\multirow{4}*{$ (512,\,256,\,16) $}} & 30 & 0.432 & 0.023 & 0.00 & 0.00 & 0.00\\
						& 25 & 0.466 & 0.024 & 0.00 & 0.00 & 0.00\\
						& 20 & 0.518 & 0.026 & 0.02 & 0.02 & 0.01\\
						& 15 & 0.774 & 0.046 & 4.27 & 4.61 & 2.80\\
						\hline
						\makecell[c]{\multirow{4}*{$ (256,\,256,\,16) $}} & 30 & 0.862 & 0.047 & 0.00 & 1.60 & 0.00\\
						& 25 & 0.986 & 0.179 & 0.00 & 18.45 & 0.00\\
						& 20 & 1.513 & 0.186 & 4.80 & 34.61 & 0.65\\
						& 15 & 1.603 & 0.190 & 34.89 & 48.95 & 11.55\\
						\hline
						\makecell[c]{\multirow{4}*{$ (1024,\,512,\,16) $}} & 30 & 1.355 & 0.156 & 0.00 & 0.00 & 0.00\\
						& 25 & 1.416 & 0.155 & 0.00 & 0.00 & 0.00\\
						& 20 & 1.722 & 0.191 & 0.01 & 0.01 & 0.01\\
						& 15 & 3.273 & 0.397 & 4.48 & 4.76 & 2.85\\
						\hline
						\makecell[c]{\multirow{4}*{$ (512,\,512,\,16) $}} & 30 & 2.640 & 0.380 & 0.00 & 1.28 & 0.00\\
						& 25 & 2.980 & 2.298 & 0.00 & 19.43 & 0.00\\
						& 20 & 6.337 & 2.337 & 3.46 & 33.67 & 0.59\\
						& 15 & 6.718 & 2.390 & 33.88 & 48.36 & 12.09\\
						\hline 
				\end{tabular}}
		\end{center}}
	\end{table}
    \begin{figure}[htbp]
    	\centering
    	\subfigure[$ (m,\,n,\,M)=(512,\,512,\,8) $.]{
    		\begin{minipage}[t]{0.47\linewidth}
    			\centering
    			\includegraphics[scale=0.5]{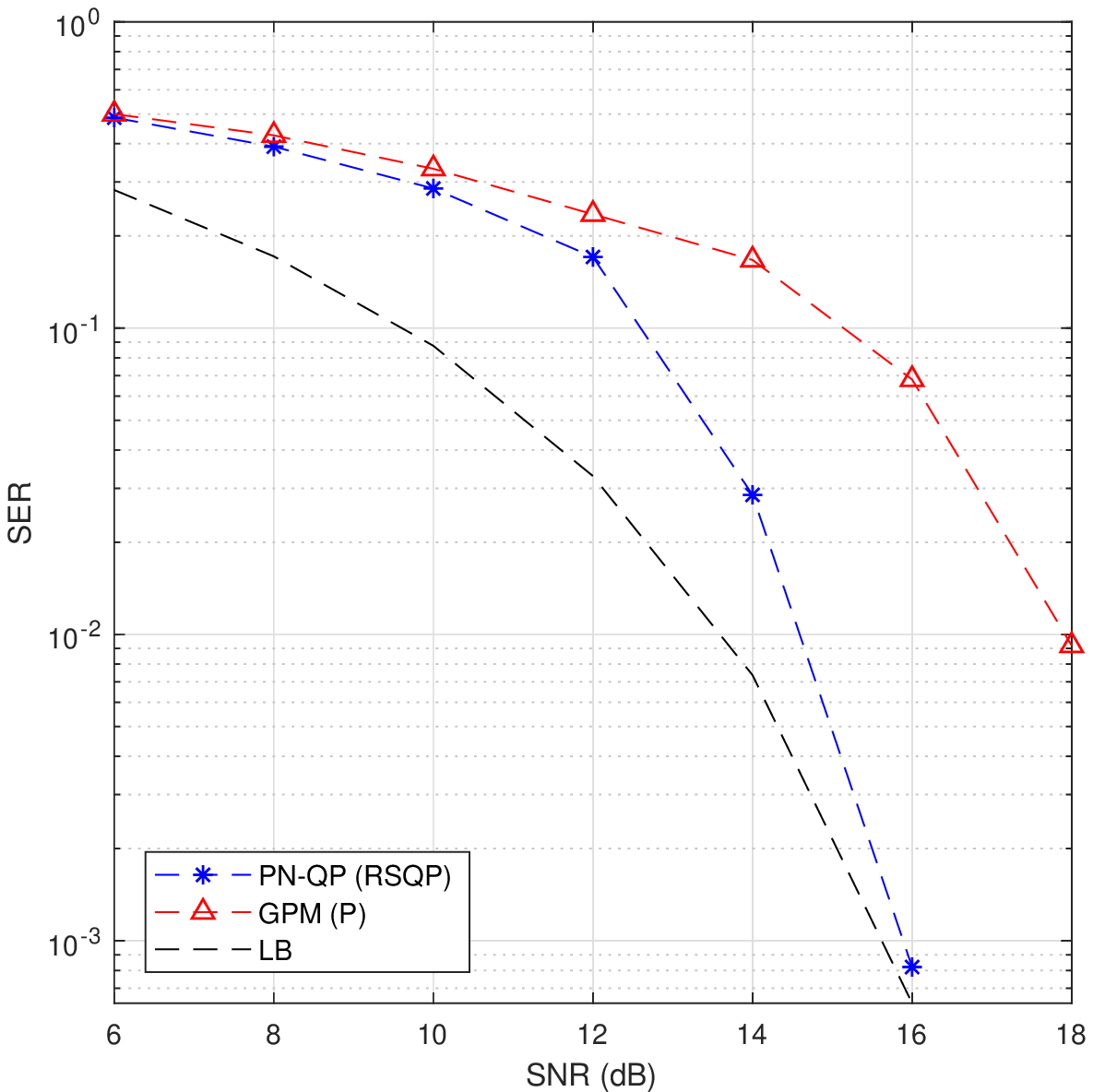}
    		\end{minipage}%
    	}%
    	\subfigure[$ (m,\,n,\,M)=(1024,\,512,\,8) $.]{
    		\begin{minipage}[t]{0.47\linewidth}
    			\centering
    			\includegraphics[scale=0.5]{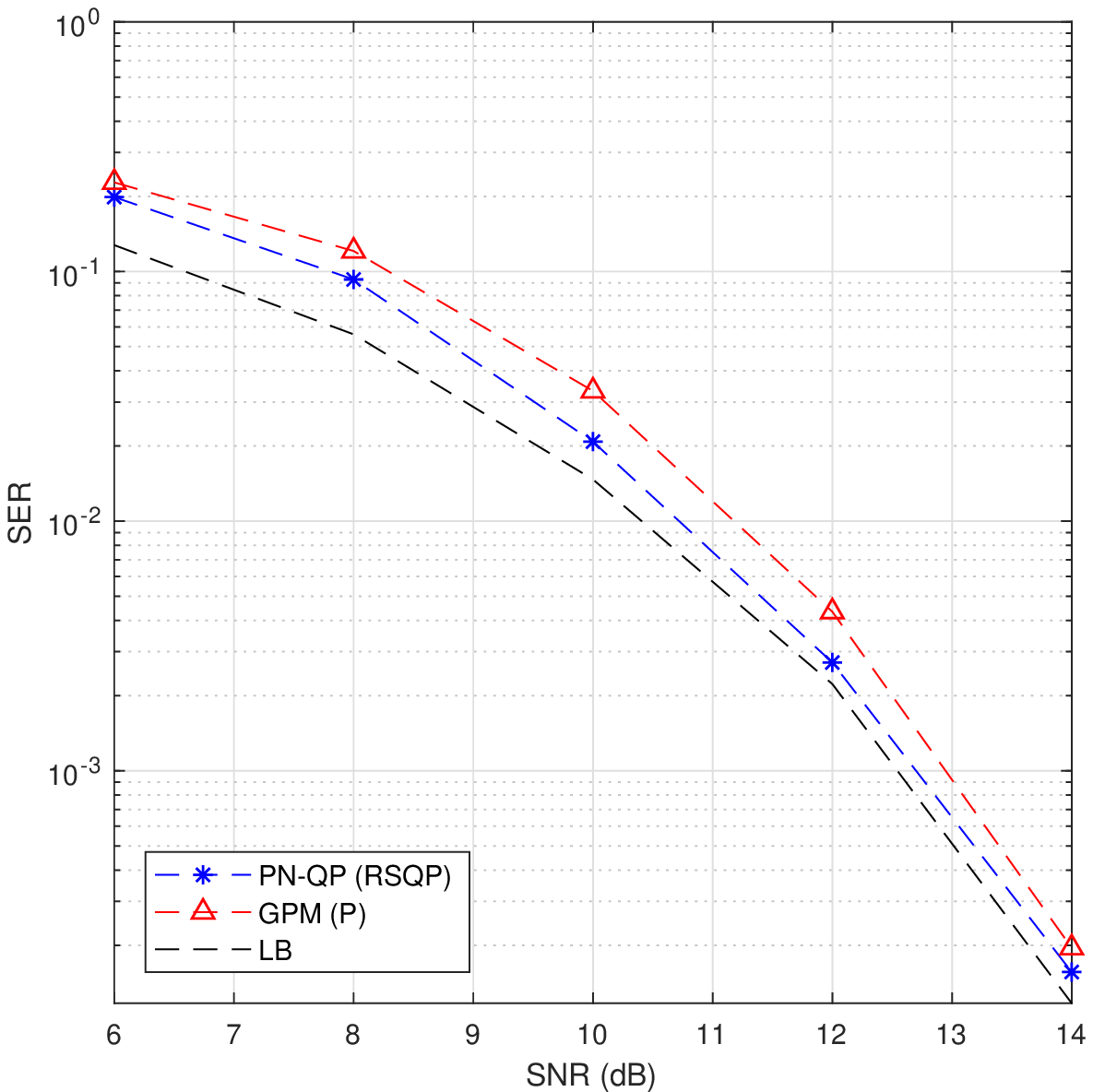}
    		\end{minipage}%
    	}\\
    	\subfigure[$ (m,\,n,\,M)=(512,\,512,\,16) $.]{
    		\begin{minipage}[t]{0.47\linewidth}
    			\centering
    			\includegraphics[scale=0.5]{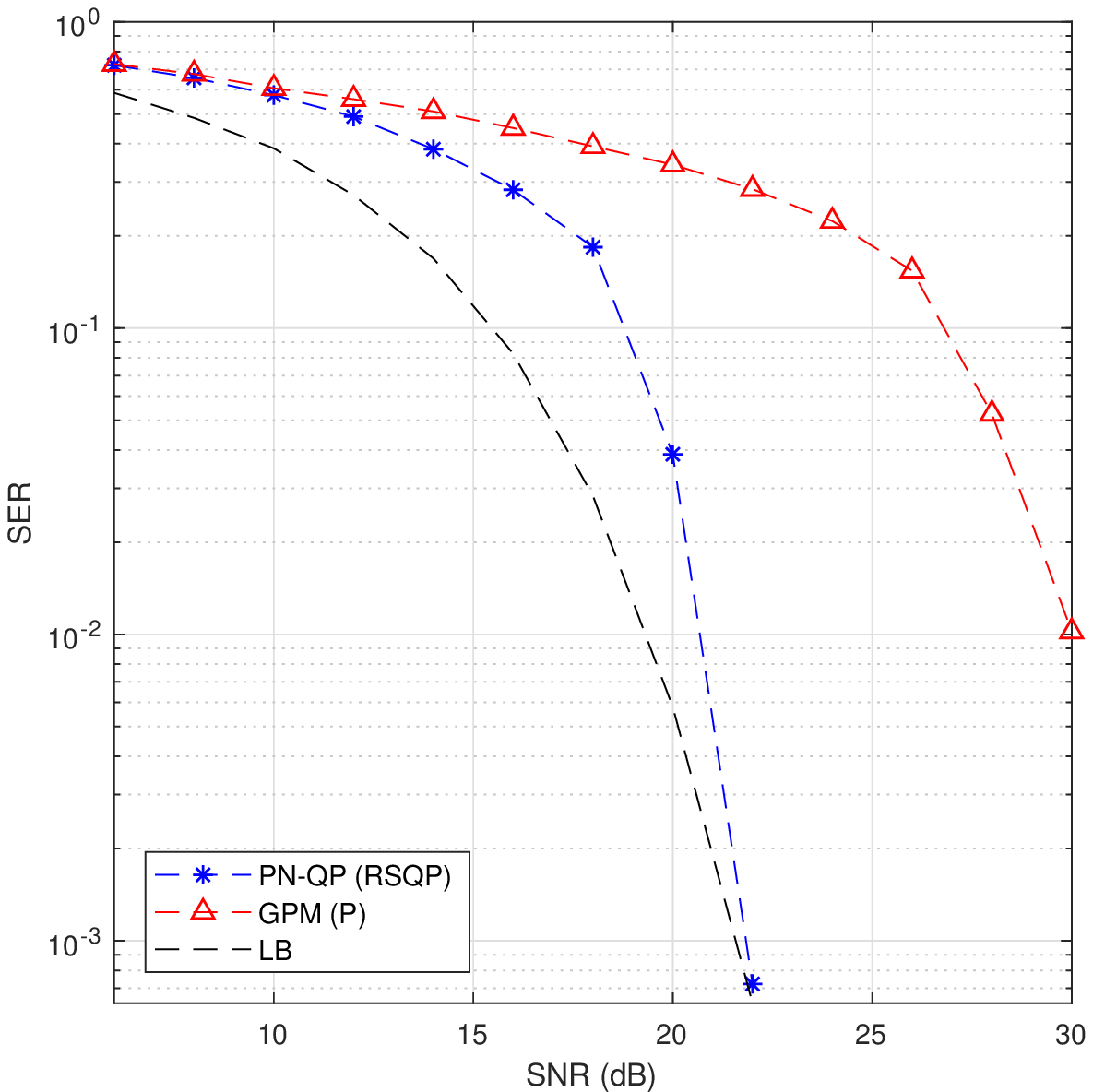}
    		\end{minipage}
    	}
    	\subfigure[$ (m,\,n,\,M)=(1024,\,512,\,16) $.]{
    		\begin{minipage}[t]{0.47\linewidth}
    			\centering
    			\includegraphics[scale=0.5]{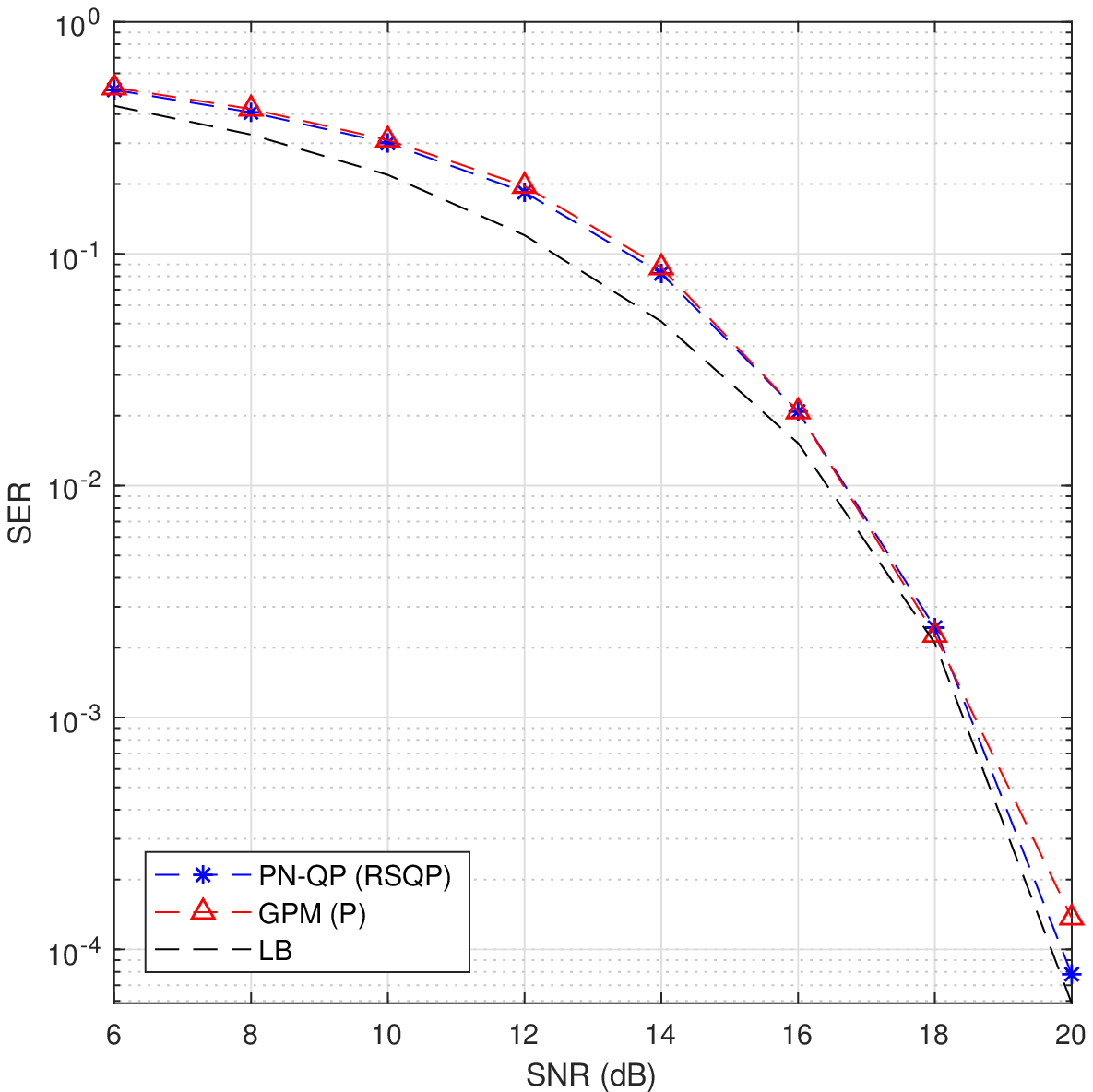}
    		\end{minipage}
    	}
    	\centering
    	\caption{The {\rm \text{SER}} performance of {\rm PN-QP} and {\rm GPM} under different {\rm \text{SNR}}s.}
    	\label{fig:ser-large}
    \end{figure}
    
    Based on the above comparisons, our proposed algorithm for solving the MIMO detection problem is more efficient 
    than existing algorithms for solving large-scale problems. Specifically, compared to MOSEK and SD, PN-QP is more efficient; compared to GPM, PN-QP achieves better detection performance. 
    To conclude, PN-QP is demonstrated to be a competitive candidate for solving the large-scale MIMO detection problem. 
    	
	\section{Conclusions}\label{sec-conclusions}
	In this paper, we proposed an efficient algorithm called PN-QP for solving the \emph{large-scale} MIMO detection problem, motivated by the massive MIMO technology. The proposed algorithm is essentially a quadratic penalty method applied to solve an SQP relaxation, i.e., problem \cref{RSQP}, of the original problem. Two key features of the proposed algorithm, which make it particularly suitable to solve the large-scale problems, are: (i) it is based on the relaxation problem \cref{RSQP}, whose numbers of variables and constraints are significantly less than those of the SDRs; and (ii) our proposed algorithm is custom-designed to identify the support set of the optimal solution by judiciously exploiting the special structure of the problem, instead of finding the solution itself, which thus substantially reduces the computational complexity of the proposed algorithm. The above two reasons lead to the better numerical performance of the proposed algorithm. In particular, our extensive simulation results show that our proposed algorithm compares favorably with the state-of-the-art algorithms (including SD and SDR based approaches) for solving the MIMO detection problem. When applied to solve large-scale problems, our proposed algorithm achieves significantly better detection performance than GPM.
	
	\appendix
	\section{Proof of \cref{thm-sqp-sqp2}}\label{app-thm-sqp-sqp2}
	We need the following results to prove \cref{thm-sqp-sqp2}.
	\begin{proposition}\label{prop-property}
		Let $Q$, $\widehat Q$, and $G$ be defined in \cref{def-Q}, \cref{DINGYI}, and \cref{Gw}, respectively. We have
		\bit
		\item [{\rm (i)}]  $q_{jj} = \sum_{k = 1}^m|h_{kj}|^2$ for $j = 1, \ldots,n$;
		\item [{\rm (ii)}] $ \hat q_{kj} = \hat q_{jk} = \hat q_{(n+j)(n+k)} = \hat q_{(n+k)(n+j)} $, $ \hat q_{(n+j)k} = \hat q_{k(n+j)} = -\hat q_{(n+k)j} = -\hat q_{j(n+k)} $, $\hat q_{jj} = \hat q_{(n+j)(n+j)} = q_{jj}$, and $\hat q_{(n+j)j} = \hat q_{j(n+j)} = 0$ for $j,\ k = 1, \ldots,n$;
		\item [{\rm (iii)}]  $ S_{jk}=\hat{q}_{jk}\alpha\alpha^\top+\hat{q}_{(n+j)k}\beta\alpha^\top+\hat{q}_{j(n+k)}\alpha\beta^\top+\hat{q}_{(n+j)(n+k)}\beta\beta^\top $
		and $S_{jk} = S_{kj}^\top$ for $j,\ k = 1, \ldots,n$; and
		\item [{\rm (iv)}] $\diag (S_{jj}) = q_{jj}\boldsymbol{e}$ for $j = 1, \ldots,n$.
		\eit	
	\end{proposition}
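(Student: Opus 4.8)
The plan is to prove parts (i)--(iv) in the stated order, since each later part invokes the earlier ones, and to exploit throughout the Hermitian structure of $Q = H^\dagger H$: because $Q^\dagger = Q$, the matrix $\RE(Q)$ is symmetric, $\IM(Q)$ is skew-symmetric, and the diagonal entries of $Q$ are real.

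Part (i) is a one-line expansion: $q_{jj} = (H^\dagger H)_{jj} = \sum_{k=1}^m \overline{h_{kj}}\,h_{kj} = \sum_{k=1}^m |h_{kj}|^2$. For part (ii), I would read off the block form of $\widehat Q$ in \cref{DINGYI}, which gives, for $j,k \le n$, $\hat q_{jk} = \RE(q_{jk})$, $\hat q_{j(n+k)} = -\IM(q_{jk})$, $\hat q_{(n+j)k} = \IM(q_{jk})$, and $\hat q_{(n+j)(n+k)} = \RE(q_{jk})$. Combining these with $\RE(q_{jk}) = \RE(q_{kj})$ and $\IM(q_{jk}) = -\IM(q_{kj})$ yields all the listed equalities among off-diagonal entries; the diagonal relations $\hat q_{jj} = \hat q_{(n+j)(n+j)} = q_{jj}$ and $\hat q_{(n+j)j} = \hat q_{j(n+j)} = 0$ follow from $q_{jj}$ being real.

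For part (iii), I would substitute $P = \begin{bmatrix} A \\ B \end{bmatrix}$ with $A = I \otimes \alpha^\top$, $B = I \otimes \beta^\top$ into $G = P^\top \widehat Q P$ and expand, obtaining
\[ G = A^\top \RE(Q)\, A - A^\top \IM(Q)\, B + B^\top \IM(Q)\, A + B^\top \RE(Q)\, B. \]
The key bookkeeping fact is that, since $A^\top = I \otimes \alpha$ and $B^\top = I \otimes \beta$, the $(j,k)$-block of $A^\top M A$ equals $m_{jk}\,\alpha\alpha^\top$ for any $M = (m_{jk}) \in \mathbb{R}^{n\times n}$, and likewise the $(j,k)$-blocks of $A^\top M B$, $B^\top M A$, $B^\top M B$ are $m_{jk}\,\alpha\beta^\top$, $m_{jk}\,\beta\alpha^\top$, $m_{jk}\,\beta\beta^\top$. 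Hence the $(j,k)$-block of $G$ is $S_{jk} = \RE(q_{jk})(\alpha\alpha^\top + \beta\beta^\top) + \IM(q_{jk})(\beta\alpha^\top - \alpha\beta^\top)$, and inserting the identifications of part (ii) turns this into exactly the asserted formula. The identity $S_{jk} = S_{kj}^\top$ then follows either by transposing this expression and reusing the symmetry/skew-symmetry of part (ii), or directly from $G = G^\top$ (since $\widehat Q$ is symmetric, so is $G = P^\top \widehat Q P$, whence the $(k,j)$-block is the transpose of the $(j,k)$-block).

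For part (iv), I would set $j = k$ in (iii): by (ii) the $\alpha\beta^\top$ and $\beta\alpha^\top$ coefficients vanish and the remaining two equal $q_{jj}$, so $S_{jj} = q_{jj}(\alpha\alpha^\top + \beta\beta^\top)$, whose $k$-th diagonal entry is $q_{jj}(\cos^2\theta_k + \sin^2\theta_k) = q_{jj}$. The only step that needs any care is part (iii), namely correctly tracking how the Kronecker structure of $A$ and $B$ interacts with the unstructured matrices $\RE(Q)$ and $\IM(Q)$ when forming $P^\top \widehat Q P$; everything else is direct manipulation of the real--imaginary decomposition of the Hermitian matrix $Q$.
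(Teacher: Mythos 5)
Your proof is correct and follows essentially the same route as the paper's: a direct entrywise expansion for (i) and (ii), an explicit block computation of $G=P^\top\widehat QP$ for (iii), and specialization to $j=k$ for (iv). The only cosmetic difference is in (iii), where the paper partitions $P$ into column blocks $P_j$ and computes $S_{jk}=P_j^\top\widehat QP_k$ directly, while you organize the same computation via the Kronecker identities $A^\top MA=M\otimes\alpha\alpha^\top$ (and its analogues); both yield the identical block formula.
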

	\begin{proof}
		(i) By the definition of $Q$ in \cref{def-Q}, there is
		\[
		q_{jj}= \sum_{k = 1}^m h_{kj}^\dagger  h_{kj} = \sum_{k = 1}^m |h_{kj}|^2,
		\]
		which gives (i).
		
		(ii)  By the definitions of $\widehat Q$ in \cref{DINGYI} and $ Q $ in \cref{def-Q}, the first three results in (ii) hold naturally. Note that due to (i), there is $\diag (\IM(Q))=\textbf{0},$ implying the fourth result in (ii).
		
		(iii) Let $ P $ have the partition as $ P=\begin{bmatrix}P_1 & \cdots & P_n\end{bmatrix} $, where $ P_j \in \mathbb{R}^{2n\times M} $ takes the following form: 
		\begin{equation}\label{Pj}
		\begin{aligned}
		\begin{array}{ccccccccccc}
		P_j^\top=[\textbf{0} & \cdots & \textbf{0} & \alpha & \textbf{0} & \cdots & \textbf{0} & \beta & \textbf{0} & \cdots & \textbf{0}]^\top.\\
		&&&\uparrow&&&&\uparrow&&&
		\\
		&\multicolumn{4}{r}{\text{\footnotesize{the $j$-th block}}}&\multicolumn{5}{c}{\text{\footnotesize{the $(n+j)$-th block}}}&
		\end{array}
		\end{aligned}
		\end{equation}
		By the definition and partition of $ G $ in \cref{Gw,G}, we have
		\[
		\begin{aligned}
		G&=P^\top \widehat{Q}P=\begin{bmatrix}
		P_1 & \cdots & P_n
		\end{bmatrix}^\top \widehat{Q}\begin{bmatrix}
		P_1 & \cdots & P_n
		\end{bmatrix}\\
		&=\begin{bmatrix}
		P_1^\top\widehat{Q}P_1 & \cdots & P_1^\top\widehat{Q}P_n\\
		\vdots & \ddots & \vdots\\
		P_n^\top\widehat{Q}P_1 & \cdots & P_n^\top\widehat{Q}P_n
		\end{bmatrix}=\begin{bmatrix}
		S_{11} & \cdots & S_{1n}\\
		\vdots & \ddots & \vdots\\
		S_{n1} & \cdots & S_{nn}
		\end{bmatrix},
		\end{aligned}
		\]
		which, together with \cref{Pj,DINGYI}, further implies
		\[
		\begin{aligned}
		S_{jk} &=P_j^\top \widehat Q P_k\\
		&=\begin{bmatrix}
		\hat q_{j1}\alpha +\hat q_{(n+j)1}\beta & \cdots & 
		\hat q_{j(2n)}\alpha +\hat q_{(n+j)(2n)}\beta
		\end{bmatrix} P_k\\
		&=\hat{q}_{jk}\alpha\alpha^\top+\hat{q}_{(n+j)k}\beta\alpha^\top+\hat{q}_{j(n+k)}\alpha\beta^\top+\hat{q}_{(n+j)(n+k)}\beta\beta^\top.
		\end{aligned}
		\]
		The proof of the first result in (iii) is finished.
		With the first result in (iii), as well as the first two results in (ii), there is
		\[
		\begin{aligned}
		S_{jk}  &=\hat{q}_{jk}\alpha\alpha^\top+\hat{q}_{(n+j)k}\beta\alpha^\top+\hat{q}_{j(n+k)}\alpha\beta^\top+\hat{q}_{(n+j)(n+k)}\beta\beta^\top\\
		&=\hat{q}_{kj}\alpha\alpha^\top+\hat{q}_{k(n+j)}\beta\alpha^\top+\hat{q}_{(n+k)j}\alpha\beta^\top+\hat{q}_{(n+k)(n+j)}\beta\beta^\top\\
		&=(\hat{q}_{kj}\alpha\alpha^\top+\hat{q}_{k(n+j)}\alpha\beta^\top+\hat{q}_{(n+k)j}\beta\alpha^\top+\hat{q}_{(n+k)(n+j)}\beta\beta^\top)^\top\\
		&=S_{kj}^\top.\\
		\end{aligned}
		\]
		We get the second result in (iii).
		
		(iv) Due to (iii), there is
		\[
		\begin{aligned}
		S_{jj}&=\hat{q}_{jj}\alpha\alpha^\top+\hat{q}_{(n+j)j}\beta\alpha^\top+\hat{q}_{j(n+j)}\alpha\beta^\top+\hat{q}_{(n+j)(n+j)}\beta\beta^\top\\
		&= q_{jj}(\alpha\alpha^\top+\beta\beta^\top).
		\end{aligned}
		\]
		Recall the definitions of $\alpha$, $\beta$ in \cref{def-alphabeta} and $\theta_k$ in \cref{YJIHE}. The $k$-th diagonal entry of $S_{jj}$ takes the following form:
		\[
		q_{jj}(\cos^2\left( \theta_k\right) + \sin^2\left( \theta_k\right) ) = q_{jj},\ k = 1,\ldots,M.
		\]
		Therefore, $\diag (S_{jj})=q_{jj}\boldsymbol{e}$. The proof is finished.
	\end{proof}
	
	\begin{proposition} \label{prop-diag-cons} Under the constraints in problem \cref{SQP2}, there is
		\begin{equation}\label{eq-sum}
		\sum_{j = 1}^n \bar t_j^\top S_{jj} \bar t_j= \|H\|_2^2.
		\end{equation}	
	\end{proposition}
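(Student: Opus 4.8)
The plan is to exploit the sparse constraint to collapse each quadratic form $\bar t_j^\top S_{jj}\bar t_j$ into a single diagonal entry of $S_{jj}$, and then to invoke the structural facts in \cref{prop-property}. First I would recall that, by the same reasoning used in the proof of \cref{qap-sqp}, the constraints $\boldsymbol{e}^\top\bar t_j=1$, $t\geqslant 0$, and $\|t\|_0\leqslant n$ together force $\|\bar t_j\|_0=1$ for every $j$; hence each block $\bar t_j$ coincides with a canonical basis vector $e_{k_j}\in\mathbb{R}^M$ for some index $k_j\in\{1,\ldots,M\}$. Consequently $\bar t_j^\top S_{jj}\bar t_j$ is precisely the $k_j$-th diagonal entry $(S_{jj})_{k_jk_j}$ of $S_{jj}$.

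Next I would apply \cref{prop-property}(iv), which gives $\diag(S_{jj})=q_{jj}\boldsymbol{e}$, so that every diagonal entry of $S_{jj}$ equals $q_{jj}$; in particular $\bar t_j^\top S_{jj}\bar t_j=q_{jj}$ irrespective of which index $k_j$ is active. Summing over $j$ then yields $\sum_{j=1}^n \bar t_j^\top S_{jj}\bar t_j=\sum_{j=1}^n q_{jj}$. Finally I would close the argument with \cref{prop-property}(i), namely $q_{jj}=\sum_{k=1}^m|h_{kj}|^2$, whence $\sum_{j=1}^n q_{jj}=\sum_{j=1}^n\sum_{k=1}^m|h_{kj}|^2=\|H\|_2^2$, with $\|\cdot\|_2$ understood as the Frobenius norm in accordance with our notational conventions. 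This establishes \cref{eq-sum}.

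As for difficulty, there is essentially no technical obstacle: the identity follows directly from the already-established facts that feasibility of \cref{SQP2} makes each $\bar t_j$ a $0$-$1$ indicator, and that the diagonal of each block $S_{jj}$ is constant and equal to $q_{jj}$. The only point deserving care is to make explicit that it is the \emph{sparse} constraint, and not merely the simplex constraint $\boldsymbol{e}^\top\bar t_j=1,\ \bar t_j\geqslant 0$, that is being used here: on the full simplex one only obtains $\bar t_j^\top S_{jj}\bar t_j=q_{jj}\big((\alpha^\top\bar t_j)^2+(\beta^\top\bar t_j)^2\big)$, which need not equal $q_{jj}$, so the reduction to a single diagonal entry is exactly where the hypothesis genuinely enters.
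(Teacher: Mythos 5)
Your proof is correct and follows essentially the same route as the paper's: the sparse constraint reduces each block $\bar t_j$ to a canonical basis vector, so each quadratic form collapses to a diagonal entry of $S_{jj}$, which by \cref{prop-property}(iv) equals $q_{jj}$, and summing via \cref{prop-property}(i) gives $\|H\|_2^2$. Your closing observation that the simplex constraints alone would only yield $q_{jj}\bigl((\alpha^\top\bar t_j)^2+(\beta^\top\bar t_j)^2\bigr)$ is a correct and worthwhile clarification of where the sparsity hypothesis genuinely enters.
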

	\begin{proof}
		Under the constraints in problem \cref{SQP2}, for each $\bar t_j$, there exists $l_j\in\{1,\ldots,M\}$, such that $\bar t_j = e_{l_j}$, where $e_{l_j}$ denotes the $l_j$-th column in the identity matrix $I\in\mathbb R^{M\times M}$. As a result, with (i) and (iv) in \cref{prop-property}, there is
		$$
		\sum_{j = 1}^n \bar t_j^\top S_{jj} \bar t_j=\sum_{j=1}^n e_{l_j}^\top S_{jj}e_{l_j} 
		=  \sum_{j=1}^n q_{jj} 
		=  \sum_{j = 1}^n \sum_{k=1}^m |h_{kj}|^2 
		=  \|H\|_2^2,
		$$
		which gives \cref{eq-sum}. The proof is finished.
	\end{proof}
	
	Now we are ready to prove \cref{thm-sqp-sqp2}.
	\begin{proof}
		Using \cref{prop-diag-cons}, we have 
		\[
		h(t)-f(t) = \sum_{j = 1}^n\bar t_j^\top S_{jj}\bar t_j = \|H\|_2^2.
		\]
		This, together with the fact that the constraints of problems \cref{SQP1} and \cref{SQP2} are the same, implies that problems \cref{SQP1} and \cref{SQP2} are equivalent.
	\end{proof}

	\section*{Acknowledgments}
	We would like to thank the associate editor Professor William Hager for handling our submission as well as the two anonymous reviewers for their insightful comments. We would also like to thank Dr. Huikang Liu for kindly sharing  the GPM code with us.
	
	\bibliographystyle{siamplain}
	\bibliography{references}
\end{document}